\documentclass[a4paper]{article}
\usepackage{amsmath,amsthm,amssymb,amscd,fancybox}
\usepackage{mathrsfs}
\usepackage{amssymb}
\usepackage{ascmac}
\usepackage[arrow,matrix]{xy}
\usepackage{enumerate}
\usepackage{url}
\usepackage{graphicx}
\usepackage{stmaryrd}
\newtheorem{defn}{Definition}[section]
\newtheorem{lemma}[defn]{Lemma}
\newtheorem{thm}[defn]{Theorem}
\newtheorem{mainthm}[defn]{Main Theorem}
\newtheorem{prop}[defn]{Proposition}
\newtheorem{cor}[defn]{Corollary}

\newtheorem{rem}[defn]{Remark}

\def\rank{\mathop{\mathrm{rank}}\nolimits}

\def\Tr{\mathop{\mathrm{Tr}}\nolimits}

\def\Ker{\mathop{\mathrm{Ker}}\nolimits}
\def\Coker{\mathop{\mathrm{Coker}}\nolimits}

\def\Ann{\mathop{\mathrm{Ann}}\nolimits}

\def\Gal{\mathop{\mathrm{Gal}}\nolimits}

\def\Sel{\mathop{\mathrm{Sel}}\nolimits}

\def\nequiv{\equiv \hspace{-3.3mm}/\hspace{1.7mm}}

\makeatletter
   
   \@addtoreset{equation}{section}
\makeatother

\begin{document}
\title{On the plus and the minus Selmer groups for elliptic curves at supersingular primes}
\author{Takahiro KITAJIMA,\\
\it Department of Mathematics, Keio University\\
\it 3-14-1 Hiyoshi, Kohoku-ku, Yokohama, 223-8522, JAPAN\\
\it grenzwert@a6.keio.jp\\ \\
Rei OTSUKI\\
\it Department of Mathematics, Keio University\\
\it 3-14-1 Hiyoshi, Kohoku-ku, Yokohama, 223-8522, JAPAN\\
\it ray{\_}otsuki@math.keio.ac.jp
}
\date{}
\maketitle

\begin{abstract}
	Let $p$ be an odd prime number,
		$E$ an elliptic curve defined over a number field.
	Suppose that $E$ has good reduction 
		at any prime lying above $p$, and
		has supersingular reduction
		at some prime lying above $p$.
	In this paper,
		we construct the plus and the minus Selmer groups of $E$
		over the cyclotomic $\mathbb Z_p$-extension
		in a more general setting than that of B.D. Kim,
		and give a generalization of a result of B.D. Kim
		on the triviality of finite $\Lambda$-submodules
		of the Pontryagin duals of the plus and the minus Selmer groups,
		where $\Lambda$ is the Iwasawa algebra of the Galois group
		of the $\mathbb Z_p$-extension.
\end{abstract}

\section{Introduction}
	Let $p$ be an odd prime number,
		$F_0$ a finite extension of $\mathbb Q$,
		$F_{\infty}/F_0$ the cyclotomic $\mathbb Z_p$-extension
		and $F_n$ the $n$-th layer.
	Denote $\Lambda = \mathbb Z_p[[\Gal (F_{\infty}/F_0)]]$.
	Let $E$ be an elliptic curve defined over $F_0$.
	
	When $E$ has good ordinary reduction at any prime of $F_0$ lying above $p$,
		the Pontryagin dual of the $p$-primary Selmer group of $E$ over $F_{\infty}$
		is conjectured to be $\Lambda$-torsion.
	This conjecture is proved in several cases now.
	For example if the $p$-primary Selmer group of $E$ over $F_0$ is finite,
		or if $E$ is defined over $\mathbb Q$ and $F_0/\mathbb Q$ is abelian,
		then the conjecture is known to be true
		(cf. \cite{Gre99} p.55).
	
	On the contrary,
		when $E$ has good supersingular reduction at some prime of $F_0$ lying above $p$,
		the Pontryagin dual of the $p$-primary Selmer group of $E$ over $F_{\infty}$
		is no longer $\Lambda$-torsion.
	S. Kobayashi \cite{Kob03} defined the plus and the minus Selmer groups
		$\Sel ^{\pm}( F_{\infty}, E[p^{\infty}] )$
		when $E$ is defined over $\mathbb Q$, $a_p=1+p-\# \widetilde{E}(\mathbb F_p) =0$,
		and $F_0=\mathbb Q (\mu _p)$,
		where $\widetilde{E}$ denotes the reduction of $E$ at $p$,
		and $\mu _p$ denotes the group of $p$-th roots of unity.
	He proved that the Pontryagin duals $\Sel ^{\pm}( F_{\infty}, E[p^{\infty}] )^{\vee}$
		are $\Lambda$-torsion.
	A. Iovita and R. Pollack \cite{IP06} generalized
		definitions of the plus and the minus Selmer groups
		to the case when $F_0$ is a number field in which $p$ splits completely,
		$E$ is defined over $\mathbb Q$ and $a_p=0$.
	Further B.D. Kim \cite{Kim07} generalized them
		to the case when $F_0$ is a number field in which $p$ is unramified,
		$E$ is defined over $\mathbb Q$ and $a_p=0$.

	\bigskip
	
	
	B.D. Kim proved in \cite{Kim13}
		the following theorem on the triviality of finite $\Lambda$-submodules
		of $\Sel^{\pm} (F_{\infty},E[p^{\infty}])^{\vee}$.
	\begin{thm}[\cite{Kim13} Theorem 1.1]\label{known results on the triviality of finite sub}
		Let $F_0$ be a finite extension of $\mathbb Q$
			in which $p$ is unramified,
			$E$ an elliptic curve defined over $\mathbb Q$,
			and $a_p=0$.
		
		(1) Assume that $\Sel^{-} (F_{\infty},E[p^{\infty}])^{\vee}$ is $\Lambda$-torsion.
			Then $\Sel^- (F_{\infty},E[p^{\infty}])^{\vee}$
			has no nontrivial finite $\Lambda$-submodule.
		
		(2) Assume further that $p$ splits completely in $F_0$, and
			$\Sel^{+} (F_{\infty},E[p^{\infty}])^{\vee}$ is $\Lambda$-torsion.
			Then $\Sel^+ (F_{\infty},E[p^{\infty}])^{\vee}$
			has no nontrivial finite $\Lambda$-submodule.
	\end{thm}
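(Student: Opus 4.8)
The plan is to translate the statement into the vanishing of a maximal finite $\Lambda$-quotient and then to reduce that vanishing to purely local computations at the primes above $p$ by means of global duality. First I would record the elementary duality observation that, since Pontryagin duality is exact and contravariant, $X := \Sel^{\pm}(F_\infty, E[p^\infty])^\vee$ has a nonzero finite $\Lambda$-submodule if and only if $\mathcal{A} := \Sel^{\pm}(F_\infty, E[p^\infty])$ admits a nonzero finite $\Lambda$-quotient. Writing $\Gamma = \Gal(F_\infty/F_0)$, I would keep in mind the standard criterion that a finitely generated torsion $\Lambda$-module has no nonzero finite submodule precisely when it injects into its associated elementary module. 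With the torsion hypothesis in force, the whole argument then takes place inside the fundamental nine-term Poitou--Tate sequence for $E[p^\infty]$ over $F_\infty$, whose central portion reads
\[
0 \to \mathcal{A} \to H^1(G_S(F_\infty), E[p^\infty]) \xrightarrow{\ \lambda\ } \bigoplus_{v \in S} \mathcal{H}_v^{\pm} \to \mathfrak{S}^\vee \to H^2(G_S(F_\infty), E[p^\infty]),
\]
where $\mathcal{H}_v^{\pm}$ denotes the local cohomology modulo the $\pm$-condition and $\mathfrak{S}$ is the compact Selmer group of the Tate dual.

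The global input is twofold. I would first invoke the weak Leopoldt statement for $E[p^\infty]$ over the cyclotomic $\mathbb{Z}_p$-extension, giving $H^2(G_S(F_\infty), E[p^\infty]) = 0$, so that $\lambda$ is surjective onto $\Ker\!\big(\bigoplus_v \mathcal{H}_v^{\pm} \to \mathfrak{S}^\vee\big)$. I would then use the companion fact, going back to Greenberg, that $H^1(G_S(F_\infty), E[p^\infty])^\vee$ has no nonzero finite $\Lambda$-submodule; this is where the finiteness of $E(F_\infty)[p^\infty]$, and hence the control of the relevant $H^0$, is used. Dualizing the two short exact sequences extracted from the displayed complex then reduces the triviality of the finite part of $X$ to showing that the duals of the local terms $\mathcal{H}_v^{\pm}$ carry no finite $\Lambda$-submodule, together with a torsion-freeness property of $\mathfrak{S}$.

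The heart of the matter is thus local. For $v \nmid p$ the modules $\mathcal{H}_v^{\pm}$ are handled by the usual analysis of unramified (or tamely ramified) local cohomology along $F_\infty$, whose duals are $\Lambda$-cofree after inverting finitely many factors. The essential computation is at the primes $v \mid p$, where $\mathcal{H}_v^{\pm}$ is defined through Kobayashi's plus/minus norm subgroups $E^{\pm}(F_{\infty,v})$. Here one must determine the $\Lambda$-module structure of these local conditions and verify that their Pontryagin duals are torsion-free; the plus and the minus conditions are governed respectively by trace compatibility at the even and the odd layers, which is exactly the source of the asymmetry in the statement. I expect this local structural computation to be the main obstacle: the minus condition trivializes uniformly when $p$ is merely unramified in $F_0$, yielding part (1), whereas the plus condition admits the same clean description only when the residue degree is $1$, i.e. when $p$ splits completely, which forces the extra hypothesis in part (2). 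Once the correct $\Lambda$-structure of $\mathcal{H}_v^{\pm}$ is established, feeding it back through the dualized Poitou--Tate sequence shows that $\mathcal{A}$ has no nonzero finite $\Lambda$-quotient, hence that $X$ has no nonzero finite $\Lambda$-submodule, completing both parts.
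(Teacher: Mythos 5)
You should first note that the paper does not prove this statement at all: it is quoted from \cite{Kim13} as a known result (the paper's own Main Theorem is a generalization proved by what the authors explicitly call ``another strategy''). Your proposal is essentially a reconstruction of B.D.~Kim's original Poitou--Tate route, so the right comparison is against that, and there you have a genuine gap at the primes $v \nmid p$. In the dualized sequence $0 \to \bigl(\bigoplus_{v \in S} \mathcal H_v^{\pm}\bigr)^{\vee} \to H^1(G_S(F_\infty),E[p^\infty])^{\vee} \to \Sel^{\pm}(F_\infty,E[p^\infty])^{\vee} \to 0$, the Greenberg-type lemma you need (the paper's Proposition \ref{Greenberg's lemma 1}) requires the injected module $M$ to be $\Lambda$-\emph{free}: its proof uses $M^{\Gamma}=0$ and $M_\Gamma$ being $\mathbb Z_p$-free. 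For $v \nmid p$ the local term $H^1(F_{\infty,v},E[p^\infty])$ is $\Lambda$-cotorsion (it vanishes at primes of good reduction but is in general nonzero at bad-reduction primes, where Tamagawa factors contribute), so its Pontryagin dual is a nonzero \emph{torsion} $\Lambda$-module and can never be $\Lambda$-free; your claim that these duals are ``$\Lambda$-cofree after inverting finitely many factors'' is false in the sense needed, and the lemma then fails for them (one gets $M_\Gamma$ finite and nonzero, which ruins the freeness of the $\Gamma$-invariants of the cokernel). Handling these primes is exactly the delicate point of the Poitou--Tate approach; the paper's alternative strategy is designed to avoid it, by comparing $\Sel^{\pm}$ with the full Selmer group $\Sel$ (which imposes the \emph{same} conditions away from $p$), so that only the supersingular $p$-adic local terms enter the exact sequence (\ref{exact sequence coming from defn of Selmer groups}) --- and those the paper proves to be free (Proposition \ref{Lambda module structure of H^1/E^{pm}}) --- while the triviality of finite submodules of $\Sel(F_\infty,E[p^\infty])^{\vee}$ itself is obtained from Matsuno's theorem (Proposition \ref{key prop for triviality of finite submodule of Sel}) rather than from duality.

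Two further points. First, your proposal defers rather than proves its essential local input: the $\Lambda$-structure of the $\pm$-conditions at $v \mid p$ is only announced as ``the main obstacle.'' For the stated theorem this is exactly Kim's local result, quoted in the paper as Proposition \ref{known results on the local conditions}; citing it would close that hole, but as written the hardest ingredient is assumed. (Also, surjectivity of the localization map $\lambda$ does not follow from weak Leopoldt alone; it needs the vanishing of the compact Selmer group of the Tate dual, a separate standard argument under the cotorsion hypothesis.) Second, a structural remark in your favor: if completed, the Poitou--Tate route treats each sign separately, matching the one-sided hypotheses of parts (1) and (2), whereas the paper's strategy intrinsically requires \emph{both} $\Sel^{+}$ and $\Sel^{-}$ to be $\Lambda$-torsion, since its bound on $\rank_{\mathbb Z_p}\Sel(F_n,E[p^\infty])^{\vee}$ is $\lambda^{+}+\lambda^{-}$ (Proposition \ref{Z_p-corank of Sel is bounded}); this is why the paper's method, as it stands, cannot recover the theorem in the one-sided form stated here.
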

	Throughout this paper,
		we assume that $\Sel^{\pm} (F_{\infty},E[p^{\infty}])^{\vee}$
		is $\Lambda$-torsion as in the above theorem.
	The following proposition
		on the local conditions of the plus and the minus Selmer groups
		was an important ingredient
		for Theorem \ref{known results on the triviality of finite sub}.
	\begin{prop}[\cite{Kim07} Proposition 3.13, \cite{Kim13} Proposition 2.2 and Proposition 2.3]\label{known results on the local conditions}
		Let $k_0$ be a finite unramified extension of $\mathbb Q_p$,
			and $k_{\infty}$ the cyclotomic $\mathbb Z_p$-extension of $k_0$.
		We denote the completed group ring $\mathbb Z_p[[\Gal (k_{\infty}/k_0)]]$ by $\Lambda$.
		
		(1) We have
		\begin{eqnarray*}
			(E^- (k_{\infty}) \otimes \mathbb Q_p/ \mathbb Z_p)^{\vee} \cong \Lambda ^{\oplus d}
		\end{eqnarray*}
		where $d=[k_0:\mathbb Q_p]$.
		
		(2) If $k_0= \mathbb Q_p$, we have
		\begin{eqnarray*}
			(E^+(k_{\infty}) \otimes \mathbb Q_p/\mathbb Z_p) ^{\vee} \cong \Lambda.
		\end{eqnarray*}
	\end{prop}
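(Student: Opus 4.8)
The plan is to transfer the whole computation to the formal group of $E$ and then pin down the $\Lambda$-module structure of the plus/minus points by a descent (Nakayama-type) argument over the regular local ring $\Lambda$. Write $\hat{E}$ for the formal group of $E$ over $\mathcal{O}_{k_0}$ and $\mathfrak{m}_n$ for the maximal ideal of $\mathcal{O}_{k_n}$, so that $\hat{E}(\mathfrak{m}_n)$ is a finitely generated $\mathbb{Z}_p$-module of rank $[k_n:\mathbb{Q}_p]$ (torsion-free for $n=0$, since $k_0/\mathbb{Q}_p$ is unramified and $p$ is odd). The reduction sequence $0 \to \hat{E}(\mathfrak{m}_n) \to E(k_n) \to \widetilde{E}(\mathbb{F}_{q_n}) \to 0$ has finite cokernel of order prime to $p$, because at a supersingular prime $\widetilde{E}(\overline{\mathbb{F}}_p)[p]=0$; hence tensoring with $\mathbb{Q}_p/\mathbb{Z}_p$ (which kills finite groups and $\mathbb{Z}_p$-torsion) gives $E^{\pm}(k_n) \otimes \mathbb{Q}_p/\mathbb{Z}_p \cong \hat{E}^{\pm}(\mathfrak{m}_n) \otimes \mathbb{Q}_p/\mathbb{Z}_p$, compatibly in $n$. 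Taking the direct limit in $n$ and Pontryagin duals, it suffices to identify $X^{\pm} := \varprojlim_n\bigl(\hat{E}^{\pm}(\mathfrak{m}_n) \otimes \mathbb{Q}_p/\mathbb{Z}_p\bigr)^{\vee}$ as a $\Lambda$-module.

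The workhorse is the finite-level structure of $\hat{E}^{\pm}(\mathfrak{m}_n)$. Since $a_p=0$ and $k_0/\mathbb{Q}_p$ is unramified, $\hat{E}$ is, up to isomorphism over $\mathcal{O}_{k_0}$, a Honda formal group of height $2$, and the functional equation of its logarithm yields a recursion relating the trace maps $\Tr_{n/m}$ across consecutive layers. I would use this recursion to reproduce Kobayashi's and Kim's analysis: the conditions defining $\hat{E}^{+}$ (resp.\ $\hat{E}^{-}$) impose Galois-trace constraints at the even (resp.\ odd) intermediate layers, the groups $\hat{E}^{\pm}(\mathfrak{m}_n)$ are free over $\mathbb{Z}_p$ of the resulting (alternating) rank, and the norm maps between successive layers respect the $\mathbb{Z}_p[\Gal(k_n/k_0)]$-action. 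The outcome I am after is purely homological: (i) $X^{\pm}$ is finitely generated over $\Lambda$; (ii) $\gamma-1$ acts injectively on $X^{\pm}$, where $\gamma$ is a topological generator of $\Gamma=\Gal(k_{\infty}/k_0)$; and (iii) $\bigl(\hat{E}^{-}(\mathfrak{m}_{\infty}) \otimes \mathbb{Q}_p/\mathbb{Z}_p\bigr)^{\Gamma} \cong (\mathbb{Q}_p/\mathbb{Z}_p)^{\oplus d}$ with $d=[k_0:\mathbb{Q}_p]$, coming by control from the bottom layer, where $\hat{E}^{-}(\mathfrak{m}_0)=\hat{E}(\mathfrak{m}_0)\cong\mathbb{Z}_p^{\oplus d}$; when $k_0=\mathbb{Q}_p$ the analogous plus invariants have corank $1$.

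Granting (i)--(iii), the conclusion follows from a standard criterion over $\Lambda \cong \mathbb{Z}_p[[T]]$ with $T=\gamma-1$: a finitely generated $\Lambda$-module $X$ on which $T$ is a non-zero-divisor and with $X/TX$ free of rank $r$ over $\mathbb{Z}_p$ satisfies $X \cong \Lambda^{\oplus r}$. Indeed, lifting a $\mathbb{Z}_p$-basis of $X/TX$ gives a surjection $\Lambda^{\oplus r} \twoheadrightarrow X$ by topological Nakayama; applying the snake lemma to multiplication by $T$ shows the induced surjection $\mathbb{Z}_p^{\oplus r} \twoheadrightarrow X/TX$ of free $\mathbb{Z}_p$-modules of equal rank is an isomorphism, so the kernel $K$ obeys $K=TK$ and hence $K=0$. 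Dualizing (iii) gives $X^{-}/TX^{-} \cong \mathbb{Z}_p^{\oplus d}$, and with (i),(ii) this yields $X^{-} \cong \Lambda^{\oplus d}$; the same argument with $r=1$ gives part (2).

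The step I expect to be the main obstacle is the finite-level module computation for a general unramified base, namely establishing (ii) and (iii) when $d=[k_0:\mathbb{Q}_p]>1$: Kobayashi's explicit formal-group calculations were carried out over $\mathbb{Q}_p$, and their extension requires tracking the Frobenius action on the $d$-dimensional unramified part through the trace recursion. This is precisely where the asymmetry between the two signs surfaces: the minus conditions interact with Frobenius so as to multiply the relevant corank uniformly by $d$, whereas the plus conditions produce a free module only when $d=1$, which is the reason part (2) is stated only for $k_0=\mathbb{Q}_p$.
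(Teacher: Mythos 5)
Your overall architecture is sound, and it is in fact the same architecture the paper uses for its generalization: reduce to the formal group (your reduction sequence argument is the paper's Lemma \ref{comparison between Epm and E^pm}), then conclude by the freeness criterion ``$T$ a non-zero-divisor on $X$ and $X/TX$ free over $\mathbb Z_p$ implies $X$ free over $\Lambda$,'' which is exactly Lemma \ref{equivalent conditions on freenes and triviality of finite lambda-submodules} (1) in dual form, and your Nakayama/snake argument for it is correct. The genuine gap is that your inputs (ii) and (iii) \emph{are} the theorem, and the one mechanism you offer for them --- ``control from the bottom layer'' --- is not valid. The natural map $\widehat{E}^{\pm}(\mathfrak m_0)\otimes\mathbb Q_p/\mathbb Z_p \to \bigl(\widehat{E}^{\pm}(\mathfrak m_{\infty})\otimes\mathbb Q_p/\mathbb Z_p\bigr)^{\Gamma}$ is injective, but its cokernel is the $p$-primary torsion of the coinvariants of the points themselves, $\bigl(\widehat{E}^{\pm}(\mathfrak m_{\infty})\bigr)_{\Gamma}[p^{\infty}]$ (this is the exact sequence (\ref{invariant coinvariant exact seqn's concerning local conditions})), and there is no soft reason for this term to vanish. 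Indeed the paper must compute it explicitly (Proposition \ref{pre-key proposition}), and the computation shows it is \emph{nonzero} in situations your sketch cannot distinguish from the one at hand: for the plus part over a base of degree $d\equiv 0 \pmod 4$ it has corank $2$, the dual then acquires $(\Lambda/X)^{\oplus 2}$ summands (Theorem \ref{supplementary theorem}, Remark \ref{Remark on Z_p-ranks of Gamma_n coinvariants}), and both your (ii) ($T$-injectivity) and (iii) fail --- a trap that, per the paper's introduction, M.~Kim's thesis \cite{MKim11} actually fell into. Consequently any correct proof of (ii)/(iii) must run through the fine structure of the tower: the Honda-theoretic points $(d_n)$, the trace relations of Proposition \ref{trace condition} --- crucially $\Tr_{0/-1}(d_0)=-(\varphi+\varphi^{-1})d_{-1}$, where invertibility of $\varphi+\varphi^{-1}$ in $\mathbb Z_p[\Gal(k/\mathbb Q_p)]$ is equivalent to $d\not\equiv 0\pmod 4$ (Lemma \ref{on varphi + varphi^{-1} and d}) --- the generation and presentation results (Propositions \ref{generators of norm subgroups} and \ref{galois module structure of plus minus subgroups}), and then a direct-limit computation of the torsion term. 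Note in particular that even for the single level $n=0$ that your criterion requires, that torsion term is a limit of $\bigl(\widehat{E}^{\pm}(\mathfrak m_m)/\omega_0\widehat{E}^{\pm}(\mathfrak m_m)\bigr)[p^{\infty}]$ over \emph{all} $m$, so the bottom layer alone can never suffice; your proposal leaves all of this at ``I would reproduce Kobayashi's and Kim's analysis.''

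A secondary but related misconception: you attribute the restriction of part (2) to $k_0=\mathbb Q_p$ to the plus conditions ``producing a free module only when $d=1$.'' That is not the correct dividing line. Freeness of $(E^{+}(k_{\infty})\otimes\mathbb Q_p/\mathbb Z_p)^{\vee}$ holds precisely when $d\not\equiv 0\pmod 4$, the obstruction being that $\varphi+\varphi^{-1}$ is a zero divisor exactly when $4\mid d$, with annihilator of $\mathbb Z_p$-rank $2$ producing the $(\Lambda/X)^{\oplus 2}$ summand; the statement is confined to $k_0=\mathbb Q_p$ only because that was the case treated in the cited works of B.~D.~Kim. This matters for your plan because it shows the plus/minus asymmetry enters through the trace relation at the bottom of the tower, i.e.\ through precisely the data your ``control'' heuristic discards. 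It also explains a difference in strategy: for the general statement the paper cannot use your single-level criterion at all (freeness fails when $4 \mid d$), so it instead computes the coinvariant ranks for all $n$ and pins the module down via the structure theorem (Proposition \ref{key proposition}); your route would work for the two cases actually asserted in the statement, but only after the finite-level computations that constitute the real content are supplied.
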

	When B.D. Kim studied the plus Selmer group in \cite{Kim13},
		he restricted the base field $F_0$
		as in Theorem \ref{known results on the triviality of finite sub} (2)
		to apply Proposition \ref{known results on the local conditions},
		and he got the result only in such a case.
	Even when he studied the minus Selmer group in \cite{Kim13},
		the base field $F_0$ was a finite extension of $\mathbb Q$
		in which $p$ is unramified.
	He did not consider the case when $\mu _p \subset F_0$.

	\bigskip

	In this paper, we consider more general fields for $F_0$ and $k_0$,
		and a more general elliptic curve $E$,
		than those of the above known results.
	We do not restrict these base fields even when we study the plus Selmer group.
	We also note that we also consider the case when $\mu_p \subset F_0$.
	We get the following result.
	\begin{mainthm}[Theorem \ref{main theorem II}]\label{main theorem II (Intro)}
		Let $F$ be a finite extension of $\mathbb Q$,
			$F_0=F(\mu_p)$,
			$F_{\infty}/F_0$ the cyclotomic $\mathbb Z_p$-extension, and
			$E$ an elliptic curve defined over a subfield $F'$ of $F$.
		Let $S_p^{\rm ss}$ be the set of all primes of $F'$ lying above $p$
			where $E$ has supersingular reduction.
		Assume the following conditions:
		\begin{itemize}
			\item[\rm{(i)}] $E$ has good reduction at any prime of $F'$ lying above $p$,
			
			\item[\rm{(ii)}] $S_p^{\rm ss}$ is nonempty,
			
			\item[\rm{(iii)}] any prime $w \in S_p^{\rm ss}$ is unramified in $F$,
			
			\item[\rm{(iv)}] $F'_w=\mathbb Q_p$ for any prime $w \in S_p^{\rm ss}$,
				where $F'_w$ is the completion of $F'$ at the prime $w$,
			
			\item[\rm{(v)}] $a_w = 1 + p - \# \widetilde{E}_w (\mathbb F_p) =0$
				for any prime $w \in S_p^{\rm ss}$,
				where $\widetilde{E}_w$ is the reduction of $E$ at $w$, and
			
			\item[\rm{(vi)}] both $\Sel ^{\pm}(F_{\infty}, E[p^{\infty}])^{\vee}$
				are $\Lambda$-torsion.
		\end{itemize}
		Then both $\Sel ^{\pm}(F_{\infty}, E[p^{\infty}])^{\vee}$
			have no nontrivial finite $\Lambda$-submodule.
	\end{mainthm}
	
	\begin{rem}
		\rm{
		(1) We assume the condition (i)
				since we expect the condition (vi) automatically holds true
				under this condition.
		
		(2) In the case $S_p^{\rm ss} = \emptyset$,
				we have $\Sel ^{\pm}(F_{\infty},E[p^{\infty}]) = \Sel (F_{\infty}, E[p^{\infty}])$.
			Finite $\Lambda$-submodules of $\Sel (F_{\infty},E[p^{\infty}])^{\vee}$
				was studied by Hachimori and Matsuno \cite{Hachimori-Matsuno00}.
			Thus we will be interested in the case (ii).
		
		(3) The conditions (iii) and (iv) on the fields $F$ and $F'$
				will be used in applying the local result discussed in Section 3.
		
		(4) The condition (v) is crucial in this paper.
			In our method,
				it is important to study the local conditions $E^+ (k_n)$ and $E^- (k_n)$,
				where $k_n$ is the $n$-th layer of the cyclotomic $\mathbb Z_p$-extension $k_{\infty}/k_0$
				with $k_0$ a finite extension of $\mathbb Q_p$.
			In the case when $a_w \neq 0$ for some $w \in S_p^{\rm ss}$,
				one might need another submodule of $\Sel (F_{\infty}, E[p^{\infty}])$
				instead of $\Sel ^{\pm}(F_{\infty}, E[p^{\infty}])$.
			F. Sprung \cite{Sprung12} defined $\Sel^{\sharp}(F_{\infty}, E[p^{\infty}])$
				and $\Sel^{\flat}(F_{\infty}, E[p^{\infty}])$
				instead of the plus and the minus Selmer groups
				in the case when $F_0=\mathbb Q (\mu _p)$,
				and $E$ is defined over $\mathbb Q$
				which has supersingular reduction at $p$
				without assuming $a_p = 0$.
			He defined $E^{\sharp} (k_{\infty})$ and $E^{\flat} (k_{\infty})$
				in the case when $k_0= \mathbb Q_p (\mu _p)$,
				however, did not define $E^{\sharp} (k_n)$ nor $E^{\flat} (k_n)$.
			We can not yet apply our method
				in the case when $a_w \neq 0$ for some $w$.
		
		(5) In some cases, $\Sel ^{\pm}(F_{\infty}, E[p^{\infty}])^{\vee}$
				is actually known to be $\Lambda$-torsion.
			For example, let $F_0$ be a finite abelian extension of $\mathbb Q$,
				and $F_{\infty}/F_0$ the cyclotomic $\mathbb Z_p$-extension.
			Suppose that $E$ is defined over $\mathbb Q$
				and has supersingular reduction at $p$ with $a_p =0$.
			In this case,
				one can actually show that $\Sel ^{\pm} (F_{\infty}, E[p^{\infty}])^{\vee}$ is $\Lambda$-torsion.
			Our main theorem implies that
				$\Sel ^{\pm} (F_{\infty}, E[p^{\infty}])^{\vee}$ has no nontrivial finite $\Lambda$-submodule
				for any finite abelian field $F_0$.
			On the other hand, we need a certain assumption on $F_0$
				to apply B.D. Kim's result.
		}
	\end{rem}
	
	For the proof of Theorem \ref{main theorem II (Intro)},
		we will generalize Proposition \ref{known results on the local conditions}
		to the case of our setting.
	In the study of $(E^{\pm}(k_{\infty}) \otimes \mathbb Q_p/\mathbb Z_p)^{\vee}$,
		we find that its $\Lambda$-module structure in our setting is generally different from
		those in the settings of previous works.
	We now explain some known results on the $\Lambda$-module structure of
		$(E^{\pm}(k_{\infty}) \otimes \mathbb Q_p/\mathbb Z_p)^{\vee}$.
	
	Takeji \cite{Takeji14} considered the case when
		$k_0$ is a quadratic unramified extension of $\mathbb Q_p$.
	He generalized Proposition \ref{known results on the local conditions} to this case,
		i.e. he proved that $(E^{\pm}(k_{\infty}) \otimes \mathbb Q_p/\mathbb Z_p)^{\vee}$
		is a free $\Lambda$-module of $\Lambda$-rank $2$.
	Applying this result,
		he proved that $\Sel ^{\pm}(F_{\infty},E[p^{\infty}])^{\vee}$
		has no nontrivial finite $\Lambda$-submodule
		in the case when $F_0$ is a quadratic number field in which $p$ inerts,
		which is a generalization of Theorem \ref{known results on the triviality of finite sub}.
	
	M. Kim proved in his dissertation \cite{MKim11}
		that $E^{\pm}(k_n)$ are cyclic $\mathbb Z_p[\Gal (k_n/\mathbb Q_p)]$-modules for all $n$,
		where $k_n$ is the $n$-th layer of the cyclotomic $\mathbb Z_p$-extension $k_{\infty}/k_0$,
		in the case when $k$ is a general finite unramified extension of $\mathbb Q_p$ and $k_0=k(\mu _p)$,
		however, he did not notice that
		the assumption $[k:\mathbb Q_p] \nequiv 0$ (mod $4$) is needed.
	From this cyclicity, one can show that
		$(E^{\pm}(k_{\infty}) \otimes \mathbb Q_p/\mathbb Z_p)^{\vee}$
		is a free $\Lambda$-module of $\Lambda$-rank $[k_0:\mathbb Q_p]$,
		which is a generalization of Proposition \ref{known results on the local conditions}.
	
	B.D. Kim \cite{Kim14} independently generalized Proposition \ref{known results on the local conditions}
		to the case when $k_0$ is a finite unramified extension of $\mathbb Q_p$
		and $[k_0:\mathbb Q_p] \nequiv 0$ (mod $4$),
		i.e. he proved in this case that
		$(E^{\pm}(k_{\infty}) \otimes \mathbb Q_p/\mathbb Z_p)^{\vee}$
		is a free $\Lambda$-module of $\Lambda$-rank $[k_0:\mathbb Q_p]$
		(cf. \cite{Kim14} Theorem 2.8).
	Applying this result,
		we can generalize Theorem \ref{known results on the triviality of finite sub} to the case
		when $F_0$ is a finite extension of $\mathbb Q$ in which $p$ is unramified
		and $[F_{0,v}:\mathbb Q_p] \nequiv 0$ (mod $4$) for all primes $v$ of $F_0$ lying above $p$,
		where $F_{0,v}$ is the completion of $F_0$ at the prime $v$.
		
	We remark that we consider more general settings
		than those of all the above known results.

	\bigskip

	
	Here we prepare some notations of our settings
		and explain an obstruction for generalizing Proposition \ref{known results on the local conditions}
		to the case of our setting, which we have overcome in this paper.
	Let $k$ be a finite unramified extension of $\mathbb Q_p$ of degree $d$,
		$k_0=k(\mu _{p})$,
		$k_{\infty}$ the cyclotomic $\mathbb Z_p$-extension of $k_0$,
		$k_n$ the $n$-th layer,
		$\Delta = \Gal (k(\mu _p)/k)$,
		$\Gamma = \Gal (k_{\infty}/k_0)$,
		$\Gamma _n = \Gal (k_{\infty}/k_n)$,
		$G_n = \Gal (k_n/\mathbb Q_p)$
		and $\Lambda = \mathbb Z_p[[\Gamma]]$.
	
	An essential property,
		expected in all previous works \cite{Kim07}, \cite{Kim13}, \cite{Kim14}, \cite{MKim11}, and \cite{Takeji14}
		was that $E^{\pm} (k_n)$ are cyclic $\mathbb Z_p[G_n]$-modules for all $n$.
	From this expected property, we can show that
		$(E^{\pm}(k_{\infty}) \otimes \mathbb Q_p/\mathbb Z_p)^{\vee}$
		is a free $\Lambda$-module of $\Lambda$-rank $[k_0:\mathbb Q_p]$.
	On the contrary to this expectation, we find that
		$E^+ (k_n)$ are not cyclic $\mathbb Z_p[G_n]$-modules
		when $d \equiv 0$ (mod $4$)
		(cf. Proposition \ref{generators of norm subgroups}
		and Remark \ref{remark on d_{-1}}).
	An obstruction is that
		this non-cyclicity makes $(E^+(k_{\infty}) \otimes \mathbb Q_p/\mathbb Z_p)^{\vee}$ more complicated.
	In fact, we find that
		$(E^+(k_{\infty}) \otimes \mathbb Q_p/\mathbb Z_p)^{\vee}$
		is not a free $\Lambda$-module
		in the case when $d \equiv 0$ (mod $4$)
		(cf. Remark \ref{Remark on Z_p-ranks of Gamma_n coinvariants}).
	Therefore,
		the same statement with the conclusion of Proposition \ref{known results on the local conditions}
		does not hold in the general setting.
	
	A crucial step for the proof of our main theorem is
		to investigate the $\Lambda$-module structure of
		$(E^{\pm}(k_{\infty}) \otimes \mathbb Q_p/\mathbb Z_p )^{\vee}$
		more precisely.
	We prove the following proposition on the local conditions
		of the plus and the minus Selmer groups,
		which is a generalization of Proposition \ref{known results on the local conditions}
		and an important ingredient for our main theorem.
	
	\begin{prop}[Proposition \ref{structure of the plus and the minus local conditions}]
		\label{key proposition (Intro)}
		$(E^{\pm}(k_{\infty})
			\otimes \mathbb Q_p/\mathbb Z_p)^{\vee}$
			has no nontrivial finite $\Lambda$-submodule
			and its $\Lambda$-rank is $[k_0:\mathbb Q_p]$.
	\end{prop}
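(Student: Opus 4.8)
The plan is to pass from the local points to the $\Lambda$-module
\[
X^{\pm} := (E^{\pm}(k_{\infty}) \otimes \mathbb{Q}_p/\mathbb{Z}_p)^{\vee} \cong \varprojlim_n (E^{\pm}(k_n) \otimes \mathbb{Q}_p/\mathbb{Z}_p)^{\vee},
\]
the limit being taken along the maps dual to the inclusions $E^{\pm}(k_n) \hookrightarrow E^{\pm}(k_{n+1})$. Since each $E^{\pm}(k_n)$ lies inside the torsion-free formal group $\widehat{E}(\mathfrak{m}_{k_n})$, it is $\mathbb{Z}_p$-free of some finite rank $d_n^{\pm}$, so $(E^{\pm}(k_n) \otimes \mathbb{Q}_p/\mathbb{Z}_p)^{\vee}$ is $\mathbb{Z}_p$-free of rank $d_n^{\pm}$ with a $\mathbb{Z}_p[G_n]$-action, and the whole problem reduces to the $\Lambda$-module structure of $X^{\pm}$.

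The first step I would carry out is to determine the $\mathbb{Z}_p[G_n]$-module structure of $E^{\pm}(k_n)$ by producing explicit generators of the norm subgroups. In the cases where $E^{\pm}(k_n)$ turns out to be cyclic over $\mathbb{Z}_p[G_n]$ --- this should hold for $E^{-}$ always, and for $E^{+}$ exactly when $d \not\equiv 0 \pmod 4$ --- the norm-compatibility of the chosen generators shows that $X^{\pm}$ is cyclic over the larger ring $\mathbb{Z}_p[[G]]$, where $G = \Gal(k_{\infty}/\mathbb{Q}_p)$, and this ring is free over $\Lambda$ of rank $[k_0:\mathbb{Q}_p] = d(p-1)$. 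Writing $X^{\pm} \cong \mathbb{Z}_p[[G]]/J$, I would then compute the $\mathbb{Z}_p$-ranks of the $\Gamma_n$-coinvariants $X^{\pm}_{\Gamma_n}$ --- which by a control/corestriction argument track the $d_n^{\pm}$ --- to read off $\rank_{\Lambda} X^{\pm} = [k_0:\mathbb{Q}_p]$. As $\mathbb{Z}_p[[G]]$ is $\Lambda$-free, $J$ is then $\Lambda$-torsion inside a torsion-free module, hence $J=0$; thus $X^{\pm} \cong \mathbb{Z}_p[[G]]$ is $\Lambda$-free, with the asserted rank and, being free, no nontrivial finite $\Lambda$-submodule.

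The remaining and genuinely harder case is $E^{+}$ with $d \equiv 0 \pmod 4$, where $E^{+}(k_n)$ is not cyclic and $X^{+}$ is not $\Lambda$-free. Here I would build an explicit presentation: two $\mathbb{Z}_p[[G]]$-generators give a surjection $\Lambda^{2[k_0:\mathbb{Q}_p]} \twoheadrightarrow X^{+}$, and the goal is to show its relation submodule is $\Lambda$-free of rank $[k_0:\mathbb{Q}_p]$, that is, $\operatorname{pd}_{\Lambda} X^{+} \le 1$. This is exactly equivalent to the desired conclusion: a nonzero finite submodule is the same as $\mathfrak{m} = (p, \gamma-1) \in \operatorname{Ass}(X^{+})$, i.e. $\operatorname{depth}_{\Lambda} X^{+} = 0$, and since $\Lambda$ is a two-dimensional regular local ring, Auslander--Buchsbaum gives $\operatorname{pd}_{\Lambda} X^{+} = 2 - \operatorname{depth}_{\Lambda} X^{+}$, so ``no finite submodule'' is precisely $\operatorname{pd}_{\Lambda} X^{+} \le 1$; the rank is then $2[k_0:\mathbb{Q}_p] - [k_0:\mathbb{Q}_p] = [k_0:\mathbb{Q}_p]$. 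To keep the relations tractable I would decompose all modules into $\Delta$-isotypic components (legitimate since $p \nmid \#\Delta = p-1$), reducing to a Frobenius-equivariant problem over $\Lambda$ in which the extra relation forced by $4 \mid d$ can be written down from the generator computation, its effect on the rank invariant (the $d_{-1}$ of the cited remark) being cross-checked against the coinvariant ranks $d_n^{+}$.

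The main obstacle is this last case: controlling the relation module of $X^{+}$ when $4 \mid d$ and proving it is \emph{free} rather than merely torsion-free. The difficulty is that freeness of a submodule of a free $\Lambda$-module is not automatic --- the relation module is a first syzygy but need not be reflexive, since $X^{+}$ is not torsion-free, so I cannot simply invoke the fact that reflexive modules over a two-dimensional regular local ring are free. I expect to need the explicit shape of the second generator and its interaction with the Frobenius action to exhibit the relations as an honest free basis (equivalently, to verify $\operatorname{pd}_{\Lambda} X^{+} \le 1$ directly), with the precise $\mathbb{Z}_p$-ranks of the $\Gamma_n$-coinvariants serving as the numerical check that the computation is complete.
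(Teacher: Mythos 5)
Your plan is correct and complete only in the cases where the statement was already essentially known, and it leaves unproved precisely the case that is the point of this proposition. For the minus part (all $d$) and the plus part with $d \nequiv 0$ (mod $4$), your cyclicity argument reproduces the route of B.D.~Kim and M.~Kim; it can be made to work, but note one step is not formal: the Pontryagin/linear dual of a cyclic $\mathbb Z_p[G_n]$-module need not be cyclic, so ``norm-compatibility of the generators shows $X^{\pm}$ is cyclic over $\mathbb Z_p[[G]]$'' requires an argument. It is true here only because, character by character, the relation ideals are generated by divisors of $\omega_n$, whose annihilators in the self-dual algebra $\mathbb Z_p[G_{-1}][X]/(\omega_n)$ are again principal (generated by the complementary divisor), and because these dual generators are trace-compatible. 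The genuine gap is the plus part with $d \equiv 0$ (mod $4$): there your proposal consists of a correct reformulation --- over the two-dimensional regular local ring $\Lambda$, absence of nontrivial finite submodules is equivalent to $\mathrm{depth}_{\Lambda}X^{+}\geq 1$, i.e.\ to $\mathrm{pd}_{\Lambda}X^{+}\leq 1$ by Auslander--Buchsbaum --- followed by the admission that you ``expect to need'' an explicit free basis of the relation module of a two-generator presentation. No such basis is produced, the rank computation is deferred to the same unperformed calculation, and numerical checks on coinvariant ranks cannot by themselves certify that a syzygy module is free. So in the one case that goes beyond the literature, nothing is proved.

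The paper closes exactly this case by a mechanism your plan does not consider and which avoids syzygies and projective dimension altogether: it computes the $\Gamma_n$-coinvariants of the dual for \emph{every} $n$ and shows they are $\mathbb Z_p$-free. Concretely, from the two-generator presentation $\widehat{E}^{+}(\mathfrak m_m)^{\chi}\cong(\mathbb Z_p[G_{-1}][X]\oplus\mathbb Z_p[G_{-1}])/\langle(\widetilde{\omega}_m^{+},-(\varphi+\varphi^{-1}))\rangle$ of Proposition \ref{galois module structure of plus minus subgroups}, the torsion module $(\widehat{E}^{+}(\mathfrak m_{\infty})^{\chi})_{\Gamma_n}[p^{\infty}]$ is computed as a direct limit over $m$ (Proposition \ref{pre-key proposition}) and turns out to be divisible, with dual $\mathbb Z_p$-free of rank $dq_n^-+\delta$, the term $\delta$ being $\rank_{\mathbb Z_p}\Ann_{\mathbb Z_p[G_{-1}]}(\varphi+\varphi^{-1})$ in the trivial-character component; the invariant--coinvariant sequence (\ref{invariant coinvariant exact seqn's concerning local conditions}) then exhibits $((\widehat{E}^{+}(\mathfrak m_{\infty})^{\chi}\otimes\mathbb Q_p/\mathbb Z_p)^{\vee})_{\Gamma_n}$ as an extension of $\mathbb Z_p$-free modules, hence $\mathbb Z_p$-free of rank $dp^n+\delta$ (Corollary \ref{Z_p-ranks of Gamma_n coinvariants}). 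Freeness of all coinvariants forces every finite $\Lambda$-submodule to vanish (the opening step of the proof of Proposition \ref{key proposition}, via Lemma \ref{equivalent conditions on freenes and triviality of finite lambda-submodules} and Krull intersection), and the growth rate $dp^n+\delta$ pins the $\Lambda$-rank at $d$ per character, i.e.\ $[k_0:\mathbb Q_p]$ in total. This same computation shows $X^{+,\chi}$ is \emph{not} $\Lambda$-free when $4\mid d$ and $\chi=\mathbf 1$ (indeed $X^{+,\chi}\cong\Lambda^{\oplus d}\oplus(\Lambda/X)^{\oplus 2}$ by Theorem \ref{supplementary theorem}, the $\Lambda/X$ factors being exactly what $\Ann_{\mathbb Z_p[G_{-1}]}(\varphi+\varphi^{-1})\neq 0$ produces), which quantifies your expectation and indicates what your relation module would have to look like. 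If you want to finish along your own lines, the efficient repair is to replace the search for a free syzygy basis by this coinvariant computation.
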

	
	We prove this proposition by calculating the $\Gamma _n$-coinvariants
		of the $\chi$-component $(E^{\pm}(k_{\infty})^{\chi} \otimes \mathbb Q_p/\mathbb Z_p)^{\vee}$
		for all $n$,
		where $\chi:\Delta \rightarrow \mathbb Z_p^{\times}$ is a character of $\Delta$.
	The original idea for such calculations,
		to calculate the $\Gamma$-coinvariants and to apply Nakayama's lemma,
		was suggested by M. Kurihara.
	The authors are very grateful to him.
	
	As a consequence of Proposition \ref{key proposition (Intro)},
		we prove the following proposition which is also an important ingredient
		for our main theorem.
	\begin{prop}[Proposition \ref{Lambda module structure of H^1/E^{pm}}]\label{key proposition 2 (Intro)}
		We have
		\begin{eqnarray*}
			\left(
			\frac{H^1(k_{\infty},E[p^{\infty}])}
			{E^{\pm}(k_{\infty}) \otimes \mathbb Q_p/\mathbb Z_p}
			\right) ^{\vee} \cong \Lambda ^{\oplus [k_0:\mathbb Q_p]}.
		\end{eqnarray*}
	\end{prop}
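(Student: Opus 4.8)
The plan is to realize the quotient as a first syzygy and exploit the fact that Proposition~\ref{key proposition (Intro)} forces the relevant dual to have small projective dimension. Since $E^{\pm}(k_{\infty}) \otimes \mathbb Q_p/\mathbb Z_p$ sits inside $H^1(k_{\infty},E[p^{\infty}])$ via the Kummer map, we have the tautological short exact sequence of discrete $\Lambda$-modules
\begin{eqnarray*}
0 \to E^{\pm}(k_{\infty}) \otimes \mathbb Q_p/\mathbb Z_p \to H^1(k_{\infty},E[p^{\infty}]) \to \frac{H^1(k_{\infty},E[p^{\infty}])}{E^{\pm}(k_{\infty}) \otimes \mathbb Q_p/\mathbb Z_p} \to 0 .
\end{eqnarray*}
Applying the exact Pontryagin duality functor yields
\begin{eqnarray*}
0 \to \left( \frac{H^1(k_{\infty},E[p^{\infty}])}{E^{\pm}(k_{\infty}) \otimes \mathbb Q_p/\mathbb Z_p} \right)^{\vee} \to H^1(k_{\infty},E[p^{\infty}])^{\vee} \to (E^{\pm}(k_{\infty}) \otimes \mathbb Q_p/\mathbb Z_p)^{\vee} \to 0 ,
\end{eqnarray*}
so that it suffices to pin down the middle term and to control the third.

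First I would establish that $H^1(k_{\infty},E[p^{\infty}])^{\vee}$ is a free $\Lambda$-module of rank $2[k_0:\mathbb Q_p]$. The two ingredients are the finiteness of $H^0(k_{\infty},E[p^{\infty}]) = E(k_{\infty})[p^{\infty}]$, which holds because $E$ has supersingular reduction, and the vanishing $H^2(k_{\infty},E[p^{\infty}]) = 0$, which follows from local Tate duality together with $H^0(k_n,T_pE) = 0$ for all $n$. A standard control argument then shows that $H^1(k_{\infty},E[p^{\infty}])^{\vee}$ has no nontrivial finite $\Lambda$-submodule, while the local Euler characteristic formula gives that each $H^1(k_n,E[p^{\infty}])$ has $\mathbb Z_p$-corank $2[k_n:\mathbb Q_p] = 2p^{n}[k_0:\mathbb Q_p]$; comparing $\Gamma_n$-coinvariants pins the $\Lambda$-rank to $2[k_0:\mathbb Q_p]$, and these facts together force freeness.

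With $H^1(k_{\infty},E[p^{\infty}])^{\vee} \cong \Lambda^{\oplus 2[k_0:\mathbb Q_p]}$ in hand, the conclusion follows homologically. The ring $\Lambda \cong \mathbb Z_p[[T]]$ is regular local of Krull dimension $2$, so by the Auslander--Buchsbaum formula a finitely generated $\Lambda$-module $A$ satisfies $\mathrm{pd}_{\Lambda}(A) = 2 - \mathrm{depth}_{\Lambda}(A)$. By Proposition~\ref{key proposition (Intro)}, the module $A = (E^{\pm}(k_{\infty}) \otimes \mathbb Q_p/\mathbb Z_p)^{\vee}$ has no nontrivial finite $\Lambda$-submodule, equivalently the maximal ideal $\mathfrak m = (p,T)$ is not an associated prime of $A$, whence $\mathrm{depth}_{\Lambda}(A) \ge 1$ and therefore $\mathrm{pd}_{\Lambda}(A) \le 1$. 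Consequently the kernel of the surjection $\Lambda^{\oplus 2[k_0:\mathbb Q_p]} \twoheadrightarrow A$ in the dualized sequence is projective, and being finitely generated over a local ring it is free; its rank equals $2[k_0:\mathbb Q_p] - [k_0:\mathbb Q_p] = [k_0:\mathbb Q_p]$ by additivity of $\Lambda$-ranks, using the rank statement of Proposition~\ref{key proposition (Intro)}. This is exactly the asserted isomorphism.

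The main technical input will be the first step, namely determining the $\Lambda$-module structure of $H^1(k_{\infty},E[p^{\infty}])^{\vee}$: one must verify the finiteness of $H^0$ and the vanishing of $H^2$ over the infinitely ramified extension $k_{\infty}$, and run the descent carefully to rule out finite $\Lambda$-submodules before invoking the structure theory. Once this is in place the remaining steps are formal. I note that an alternative to the homological argument is Kurihara's suggested method of computing $\Gamma_n$-coinvariants of the dualized sequence and applying Nakayama's lemma, which likewise yields freeness and the same rank.
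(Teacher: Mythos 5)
Your proposal is correct, and at the level of structure it follows the same route as the paper's proof: you dualize the tautological exact sequence to obtain (\ref{exact sequence on H^1/E^{pm}}), you use that the middle term $H^1(k_{\infty},E[p^{\infty}])^{\vee}$ is $\Lambda$-free of rank $2[k_0:\mathbb Q_p]$, and you use that $(E^{\pm}(k_{\infty})\otimes\mathbb Q_p/\mathbb Z_p)^{\vee}$ has $\Lambda$-rank $[k_0:\mathbb Q_p]$ and no nontrivial finite $\Lambda$-submodule (Proposition \ref{structure of the plus and the minus local conditions}). You diverge in the two supporting steps, and the second divergence is genuinely interesting. First, for the middle term the paper simply cites Greenberg's result (Proposition \ref{Greenberg's lemma 2}), applicable since $E(k_{\infty})[p^{\infty}]=0$ by Proposition \ref{E^ is torsion-free}; you instead sketch a re-proof. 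Your ingredients ($H^0$ finite, $H^2=0$ by Tate duality, control, Euler characteristics) are the right ones, but the inference ``no nontrivial finite $\Lambda$-submodule plus the rank count forces freeness'' is not valid as literally stated --- $\Lambda\oplus\Lambda/(f)$ with $f$ a distinguished polynomial has no finite submodule and the expected rank; what is actually needed is the criterion of Lemma \ref{equivalent conditions on freenes and triviality of finite lambda-submodules}, namely vanishing of the $\Gamma$-invariants of the dual (which comes from $H^2(k_0,E[p^{\infty}])=0$ via Hochschild--Serre) together with $\mathbb Z_p$-freeness of the coinvariants. This is cosmetic, since you could just cite the same result the paper does. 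Second, your concluding step replaces the paper's Lemma \ref{on freenes of the kernel} (kernel of a surjection from a free $\Lambda$-module onto a module with no nontrivial finite submodule is free), which the paper proves by a $\Gamma$-invariant/coinvariant computation, with a commutative-algebra argument: no nontrivial finite submodule is equivalent to $\mathfrak m\notin\mathrm{Ass}$, i.e.\ $\mathrm{depth}\geq 1$, so Auslander--Buchsbaum over the two-dimensional regular local ring $\Lambda$ gives $\mathrm{pd}\leq 1$, whence the first syzygy is projective, hence free over the local ring $\Lambda$, of rank $2[k_0:\mathbb Q_p]-[k_0:\mathbb Q_p]$ by additivity of ranks. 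This homological route is correct and arguably slicker; what the paper's version buys is self-containedness, staying within the elementary $\Gamma$-(co)invariant tools (NSW Proposition 5.3.19) that it reuses throughout Sections 3 and 4.
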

	
	\bigskip
	
	In comparison with the strategy for the proof of Theorem \ref{known results on the triviality of finite sub},
		we develop another strategy also for the proof of our main theorem.
	Our strategy is to prove
		that the triviality of finite $\Lambda$-submodules
		of $\Sel(F_{\infty},E[p^{\infty}])^{\vee}$ is inherited to
		that of $\Sel^{\pm}(F_{\infty},E[p^{\infty}])^{\vee}$,
		and use the following theorem.
	\begin{thm}[Theorem \ref{main theorem I}]
		Assume that both $\Sel ^{\pm}(F_{\infty},E[p^{\infty}])^{\vee}$
			are $\Lambda$-torsion.
		Then $\Sel (F_{\infty}, E[p^{\infty}])^{\vee}$
			has no nontrivial finite $\Lambda$-submodule.
	\end{thm}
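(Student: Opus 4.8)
The plan is to express the $p$-primary Selmer group as the kernel of a global-to-local map and to reduce the statement to a question about projective dimension over $\Lambda$. Write $A=E[p^{\infty}]$, let $S$ be a finite set of places of $F_0$ containing the places above $p$, the archimedean places and the places of bad reduction, and let $G_S$ be the Galois group of the maximal extension of $F_{\infty}$ unramified outside $S$. Then
\[
\Sel(F_{\infty},A)=\Ker\Bigl(H^1(G_S,A)\xrightarrow{\ \lambda\ }\bigoplus_{v\in S}\mathcal H_v\Bigr),\qquad \mathcal H_v=\frac{H^1(F_{\infty,v},A)}{\Im\bigl(E(F_{\infty,v})\otimes\mathbb Q_p/\mathbb Z_p\bigr)}.
\]
Since $\Lambda\cong\mathbb Z_p[[T]]$ is a regular local ring of Krull dimension $2$, the Auslander--Buchsbaum formula shows that for a nonzero finitely generated $\Lambda$-module $X$ one has: $X$ has no nontrivial finite $\Lambda$-submodule if and only if $\mathrm{depth}_{\Lambda}X\ge 1$, equivalently $\mathrm{pd}_{\Lambda}X\le 1$. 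Hence it suffices to prove $\mathrm{pd}_{\Lambda}\Sel(F_{\infty},A)^{\vee}\le 1$.

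First I would establish that $\lambda$ is surjective. The essential input is the vanishing $H^2(G_S,A)=0$ for the cyclotomic $\mathbb Z_p$-extension (the weak Leopoldt property, which is known), from which Poitou--Tate global duality identifies $\Coker(\lambda)$ with the Pontryagin dual of a compact Selmer group attached to the complementary local conditions. Here the hypothesis that both $\Sel^{\pm}(F_{\infty},A)^{\vee}$ are $\Lambda$-torsion enters decisively: comparing the defining sequences of $\Sel$ and of $\Sel^{\pm}$ pins down the $\Lambda$-corank of $H^1(G_S,A)$ and of $\bigoplus_{v}\mathcal H_v$, and a global Euler-characteristic count then forces the complementary compact Selmer group, which is $\Lambda$-cotorsion, to vanish, giving surjectivity of $\lambda$. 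Dualizing yields the short exact sequence
\[
0\longrightarrow \bigoplus_{v\in S}\mathcal H_v^{\vee}\longrightarrow H^1(G_S,A)^{\vee}\longrightarrow \Sel(F_{\infty},A)^{\vee}\longrightarrow 0 .
\]

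Next I would analyze the two outer terms. The global term $H^1(G_S,A)^{\vee}$ has no nontrivial finite $\Lambda$-submodule by the standard structural results of Greenberg, a consequence of the finiteness of $H^0(G_S,A)$ together with $H^2(G_S,A)=0$; thus $\mathrm{pd}_{\Lambda}H^1(G_S,A)^{\vee}\le 1$. For the local terms at the places $v\nmid p$ and at the good ordinary places above $p$ the usual computations show $\mathcal H_v^{\vee}$ is free, and at a supersingular place $v\in S_p^{\rm ss}$ the module $\mathcal H_v^{\vee}$ is, by local Tate duality, the module of compact local points sitting inside the free local Iwasawa cohomology; its $\Lambda$-structure is governed by Proposition \ref{key proposition 2 (Intro)}, which provides the freeness of the signed quotients $H^1(F_{\infty,v},A)/(E^{\pm}(F_{\infty,v})\otimes\mathbb Q_p/\mathbb Z_p)$ needed to control it. Feeding these structural facts into the displayed presentation then gives $\mathrm{pd}_{\Lambda}\Sel(F_{\infty},A)^{\vee}\le 1$, which is the desired conclusion.

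The hard part will be the concluding homological step at the supersingular places. In the ordinary setting the corresponding local term $\mathcal H_v^{\vee}$ is a free $\Lambda$-module, so that the presentation immediately yields projective dimension $\le 1$; in our setting, by contrast, $\mathcal H_v^{\vee}$ is in general \emph{not} free, reflecting the non-cyclicity of $E^{+}(k_n)$ when $d\equiv 0\pmod 4$ noted in the introduction. Consequently the conclusion cannot be read off from a naive count of projective dimensions, and the maximal finite submodule of $\Sel(F_{\infty},A)^{\vee}$ must be shown to vanish by exploiting Proposition \ref{key proposition 2 (Intro)}: the freeness of the signed quotients $H^1(F_{\infty,v},A)/(E^{\pm}\otimes\mathbb Q_p/\mathbb Z_p)$ is precisely what guarantees that the embedding $\bigoplus_{v}\mathcal H_v^{\vee}\hookrightarrow H^1(G_S,A)^{\vee}$ is sufficiently saturated for $\mathrm{depth}_{\Lambda}\Sel(F_{\infty},A)^{\vee}\ge 1$ to hold. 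A secondary technical point is the surjectivity of $\lambda$, which relies on the $\Lambda$-torsion hypothesis on $\Sel^{\pm}$ and must be settled before the homological argument can even be set up.
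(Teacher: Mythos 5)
Your route (Poitou--Tate duality plus a projective-dimension/depth argument, essentially Greenberg's strategy from the ordinary case) is genuinely different from the paper's, which never dualizes a global-to-local sequence at all; but as written the proposal has a real gap, and it sits at the places \emph{away} from $p$, not where you place it. Your assertion that for the places $v\nmid p$ ``the usual computations show $\mathcal H_v^{\vee}$ is free'' is false: for $v\nmid p$ one has $E(F_{\infty,v})\otimes\mathbb Q_p/\mathbb Z_p=0$, so $\mathcal H_v=H^1(F_{\infty,v},E[p^{\infty}])$, whose Pontryagin dual is a \emph{torsion} $\Lambda$-module (the coranks at finite level are bounded, by the local Euler characteristic formula at residue characteristic $\ell\neq p$) and is nonzero in general; a nonzero torsion module is never free. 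This matters because the concluding homological step genuinely needs freeness of the submodule: from an exact sequence $0\to M\to N\to C\to 0$ with $N$ free and $M$ merely $\mathbb Z_p$-torsion-free (equivalently, with no nontrivial finite $\Lambda$-submodule) one cannot bound $\mathrm{pd}_{\Lambda}C$ --- for instance $M=(p,X)\subset N=\Lambda$ has finite quotient $\Lambda/(p,X)$. Freeness of $M$ is exactly the hypothesis of the paper's Proposition \ref{Greenberg's lemma 1}, and repairing your argument at $v\nmid p$ (and also at possible good \emph{ordinary} primes above $p$, where freeness of $\mathcal H_v^{\vee}$ can likewise fail) requires the substantial extra machinery Greenberg uses in the ordinary case (twisting/almost-divisibility, or non-primitive Selmer groups plus a descent), none of which appears in the sketch. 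In addition, two steps are asserted rather than proved: weak Leopoldt ($H^2(G_S,A)=0$) is \emph{not} known unconditionally in this generality and must be extracted from the hypothesis that both $\Sel^{\pm}(F_{\infty},E[p^{\infty}])^{\vee}$ are $\Lambda$-torsion (e.g.\ because the fine Selmer group is contained in $\Sel^{\pm}$), and the surjectivity of $\lambda$ is a Kobayashi-type theorem that needs its own proof.

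Conversely, the step you single out as ``the hard part'' is not hard, and the diagnosis rests on a confusion. At a supersingular $v$ the module $\mathcal H_v^{\vee}$ \emph{is} free of rank $[F_{0,v}:\mathbb Q_p]$: indeed $(E(F_{\infty,v})\otimes\mathbb Q_p/\mathbb Z_p)^{\vee}$ is the dual of a divisible group, hence $\mathbb Z_p$-torsion-free, hence has no nontrivial finite $\Lambda$-submodule, so Proposition \ref{Greenberg's lemma 2} and Lemma \ref{on freenes of the kernel} apply verbatim. The non-free object in the paper is $(E^{+}(F_{\infty,v})\otimes\mathbb Q_p/\mathbb Z_p)^{\vee}$, i.e.\ the dual of the \emph{signed local condition itself}, which never enters the defining sequence of the ordinary Selmer group; likewise Proposition \ref{Lambda module structure of H^1/E^{pm}} is an ingredient for Theorem \ref{main theorem II}, not for Theorem \ref{main theorem I}. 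For comparison, the paper's actual proof is entirely finite-level and elementary: the exact sequence (\ref{exact sequence 1 for main theorem I}), whose cokernel is finite, together with the injectivity of $\Sel^{\pm}(F_n,E[p^{\infty}])\to\Sel^{\pm}(F_{\infty},E[p^{\infty}])$, bounds $\rank_{\mathbb Z_p}\Sel(F_n,E[p^{\infty}])^{\vee}$ by $\lambda^{+}+\lambda^{-}$ uniformly in $n$, and then Matsuno's criterion (Proposition \ref{key prop for triviality of finite submodule of Sel}) together with $E(F_0)[p]=0$ gives the conclusion --- no surjectivity of $\lambda$, no weak Leopoldt, and no structure theory of the local terms away from $p$ is needed.
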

	
	\bigskip
	
%
	
	The above discussion is enough to prove our main theorem,
		however,
		we can determine the explicit structure of the $\Lambda$-module
		$(E^{\pm}(k_{\infty})^{\chi} \otimes \mathbb Q_p/\mathbb Z_p)^{\vee}$,
		on which we explain here.
	We prepare some notations concerning the character decomposition.
	Let $\chi:\Delta \rightarrow \mathbb Z_p^{\times}$ be a character
		of $\Delta = \Gal (k (\mu _p)/k)$.
	If $M$ is a $\mathbb Z_p[\Delta]$-module,
		then $M$ is decomposed into
		\begin{eqnarray*}
			M = \bigoplus _{\chi}\varepsilon _{\chi} M,
		\end{eqnarray*}
		where $\varepsilon _{\chi}
		= \frac{1}{p-1} \sum _{\sigma \in \Delta} \chi (\sigma) \sigma ^{-1}
		\in \mathbb Z_p[\Delta]$.
	We denote by $M^{\chi}$ the $\chi$-component $\varepsilon_{\chi}M$.
	We fix a topological generator $\gamma \in \Gamma$,
		and identify $\mathbb Z_p[[\Gamma]]$
		with the ring of formal power series $\mathbb Z_p[[X]]$
		by identifying $\gamma$ with $1+X$.
	We get the following theorem.
	
	\begin{thm}[Theorem \ref{supplementary theorem}]\label{supplementary theorem (Intro)}
		Let $\chi :\Delta \rightarrow \mathbb Z_p^{\times}$ be a character.
		We have
		\begin{eqnarray*}
			\left( E^+(k_{\infty})^{\chi}
				\otimes \mathbb Q_p/\mathbb Z_p \right) ^{\vee}
				& \cong &
				\Lambda ^{\oplus d} \oplus
				(\Lambda /X)^{\oplus \delta},\\
			\left( E^-(k_{\infty})^{\chi}
				\otimes \mathbb Q_p/\mathbb Z_p \right) ^{\vee}
				& \cong &
				\Lambda ^{\oplus d}
		\end{eqnarray*}
		where
		\begin{eqnarray*}
			\delta =
			\left\{
			\begin{array}{ll}
				0 & \text{ if } d \nequiv 0\ (\text{mod } 4)
					\text{ or } \chi \neq \mathbf 1,\\[2mm]
				2 & \text{ otherwise}.
			\end{array}
			\right.
		\end{eqnarray*}
	\end{thm}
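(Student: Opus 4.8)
The plan is to treat both signs and both regimes of $d$ uniformly through a single structural mechanism, reducing the whole statement to a computation of $\Gamma_n$-coinvariants. Since $\#\Delta = p-1$ is prime to $p$, the idempotents $\varepsilon_\chi$ split every $\mathbb Z_p[\Delta]$-module into a direct sum of $\Lambda$-direct summands; writing $N^\pm_\chi = (E^\pm(k_\infty)^\chi \otimes \mathbb Q_p/\mathbb Z_p)^\vee$, I first record two facts valid for all $\chi$. Each $N^\pm_\chi$ is $p$-torsion-free, being the Pontryagin dual of the divisible module $E^\pm(k_\infty)^\chi \otimes \mathbb Q_p/\mathbb Z_p$; and by Proposition \ref{key proposition (Intro)} it has no nontrivial finite $\Lambda$-submodule, while the $\Lambda$-ranks of the $N^\pm_\chi$ sum to $[k_0:\mathbb Q_p] = d(p-1)$. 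The engine of the proof is the following reconstruction lemma, which I would isolate first: \emph{if a finitely generated $\Lambda$-module $N$ is $p$-torsion-free and $N/\omega_n N$ is a free $\mathbb Z_p$-module of rank $dp^n + \delta$ for all $n \ge 0$, where $\omega_n = (1+X)^{p^n} - 1$, then $N \cong \Lambda^{\oplus d} \oplus (\Lambda/X)^{\oplus \delta}$.} The proof is short: $p$-torsion-freeness gives $N[(p,X)] = 0$, hence projective dimension $\le 1$ over the regular local ring $\Lambda$, so $N$ admits a minimal free presentation $0 \to \Lambda^{\delta} \xrightarrow{A} \Lambda^{d+\delta} \to N \to 0$; the freeness of $N/XN$ of rank $d+\delta$ forces $A \equiv 0 \pmod X$, say $A = XA'$; and comparing $\dim_{\mathbb F_p} N/(p,\omega_n)N$ for varying $n$ with the prediction $dp^n + \delta$ forces the Smith normal form of $A' \bmod p$ over the discrete valuation ring $\mathbb F_p[[X]]$ to be trivial, so $A'$ is a split injection and $N$ splits as claimed.

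It then remains to compute, for each sign and each $\chi$, the $\mathbb Z_p$-module $N^\pm_\chi / \omega_n N^\pm_\chi$ for every $n$. Here I would establish a control isomorphism $N^\pm_\chi/\omega_n N^\pm_\chi \cong (E^\pm(k_n)^\chi \otimes \mathbb Q_p/\mathbb Z_p)^\vee$ from the explicit description of the norm subgroups. Since $E^\pm(k_n)$ is a subgroup of the $\mathbb Z_p$-free module $\hat E(\mathfrak m_{k_n})$, it is itself $\mathbb Z_p$-free of finite rank, so $E^\pm(k_n)^\chi \otimes \mathbb Q_p/\mathbb Z_p$ is divisible and its dual is $\mathbb Z_p$-free; thus every coinvariant quotient $N^\pm_\chi/\omega_n N^\pm_\chi$ is automatically a free $\mathbb Z_p$-module, of rank $\rank_{\mathbb Z_p} E^\pm(k_n)^\chi$. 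The reconstruction lemma now reduces the theorem to the single numerical statement that $\rank_{\mathbb Z_p} E^\pm(k_n)^\chi = dp^n + \delta$ with $\delta$ as in the statement, the value of $\delta$ being read off from the failure or success of cyclicity.

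For this last point I would invoke the explicit $\mathbb Z_p[G_n]$-module structure of the norm subgroups (Proposition \ref{generators of norm subgroups}). The minus subgroups $E^-(k_n)$ are cyclic over $\mathbb Z_p[G_n]$ for every $n$, which yields $\rank_{\mathbb Z_p} E^-(k_n)^\chi = dp^n$ for all $\chi$, so $\delta = 0$ and $N^-_\chi \cong \Lambda^{\oplus d}$. The plus subgroups behave the same way ($\delta = 0$, hence $N^+_\chi \cong \Lambda^{\oplus d}$) whenever $d \not\equiv 0 \pmod 4$ or $\chi \neq \mathbf 1$; but when $d \equiv 0 \pmod 4$ the group $E^+(k_n)$ fails to be $\mathbb Z_p[G_n]$-cyclic, and this failure is concentrated in the trivial character component and contributes a constant excess of $2$ to the $\mathbb Z_p$-rank at every level (cf. Remark \ref{remark on d_{-1}}), i.e. $\rank_{\mathbb Z_p} E^+(k_n)^{\mathbf 1} = dp^n + 2$. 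Feeding $\delta = 2$ into the reconstruction lemma gives $N^+_{\mathbf 1} \cong \Lambda^{\oplus d} \oplus (\Lambda/X)^{\oplus 2}$, which completes the computation of all $\chi$-components.

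The main obstacle is precisely this last rank computation in the plus case with $d \equiv 0 \pmod 4$. One must determine the $\mathbb Z_p[G_n]$-module structure of $E^+(k_n)$ at every finite layer, prove that cyclicity genuinely fails exactly under the congruence $d \equiv 0 \pmod 4$, and show that the two extra generators lie in the trivial $\chi$-component and propagate along the whole tower as a rank-$2$ excess that is constant in $n$ — rather than being absorbed into the growing free part or producing higher $X$-torsion. Controlling the interaction of the $\Delta$-action with the norm-compatibility of the $E^+(k_n)$, and ensuring that the control isomorphism of the second paragraph holds with neither kernel nor cokernel at each level, is the delicate technical heart; by contrast the reconstruction lemma and the entire minus case are formal.
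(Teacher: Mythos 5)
Your endgame (the ``reconstruction lemma'') is fine, but the input you feed into it is false, and the error sits exactly where the real content of the theorem lies. Writing $N^{\pm}_{\chi}=(E^{\pm}(k_{\infty})^{\chi}\otimes\mathbb Q_p/\mathbb Z_p)^{\vee}$ as you do, your ``control isomorphism'' $N^{\pm}_{\chi}/\omega_nN^{\pm}_{\chi}\cong(E^{\pm}(k_n)^{\chi}\otimes\mathbb Q_p/\mathbb Z_p)^{\vee}$ does not hold: dually it asserts that the natural injection $E^{\pm}(k_n)^{\chi}\otimes\mathbb Q_p/\mathbb Z_p\rightarrow(E^{\pm}(k_{\infty})^{\chi}\otimes\mathbb Q_p/\mathbb Z_p)^{\Gamma_n}$ is surjective, whereas by the exact sequence (\ref{invariant coinvariant exact seqn's concerning local conditions}) its cokernel is $(\widehat{E}^{\pm}(\mathfrak m_{\infty})^{\chi})_{\Gamma_n}[p^{\infty}]$, a \emph{divisible} group whose dual has $\mathbb Z_p$-rank $dq_n^-+\delta$ in the plus case and $dq_n^+$ or $d(q_n^+-1)$ in the minus case (Proposition \ref{pre-key proposition}) --- far from zero. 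Correspondingly your rank formula $\rank_{\mathbb Z_p}E^{\pm}(k_n)^{\chi}=dp^n+\delta$ is wrong: by Corollary \ref{Z_p-ranks of plus and minus subgroups} the true ranks are $dq_n^+$ (plus) and $dq_n^-$, resp.\ $d(q_n^-+1)$ (minus), with $q_n^++q_n^-=p^n$, so each finite level carries only a fraction of $p^n$; the coinvariant rank $dp^n+\delta$ your lemma needs arises only as the \emph{sum} of the finite-level rank and the corank of the torsion term above. Your inference ``$E^-(k_n)$ is $\mathbb Z_p[G_n]$-cyclic, hence $\rank E^-(k_n)^{\chi}=dp^n$'' is the giveaway: cyclicity gives the upper bound $dp^n$ (the rank of $\varepsilon_{\chi}\mathbb Z_p[G_n]$), not equality; by Proposition \ref{galois module structure of plus minus subgroups} the module is cut out by $\omega_n^-$ or $\widetilde{\omega}_n^-$, whence the smaller rank. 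Likewise the plus-case excess $\delta=2$ is \emph{not} visible as extra rank at finite levels (there is none: $\rank\widehat{E}^+(\mathfrak m_n)^{\mathbf 1}=dq_n^+$ even when $d\equiv 0$ (mod $4$)); it enters only through $\Ann_{\mathbb Z_p[G_{-1}]}(\varphi+\varphi^{-1})$ in the torsion of the coinvariants of the limit. So the step you defer to the end as ``delicate'' --- exactness of the control map at each level --- is not delicate but false, and repairing it is precisely the paper's work: one must present $\widehat{E}^{\pm}(\mathfrak m_m)^{\chi}$ explicitly for all $m$ (Proposition \ref{galois module structure of plus minus subgroups}) and compute $(\widehat{E}^{\pm}(\mathfrak m_{\infty})^{\chi})_{\Gamma_n}[p^{\infty}]$ as a direct limit over $m\geq n$ with multiplication-by-$p$ transition maps (Proposition \ref{pre-key proposition}), leading to Corollary \ref{Z_p-ranks of Gamma_n coinvariants}. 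None of this is in your plan.

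On the positive side, your reconstruction lemma is correct as stated, and its proof sketch goes through: $p$-torsion-freeness gives depth $\geq 1$, hence a minimal resolution $0\rightarrow\Lambda^{\delta}\rightarrow\Lambda^{d+\delta}\rightarrow N\rightarrow 0$; $\mathbb Z_p$-freeness of $N/XN$ of rank $d+\delta$ forces the relation matrix to be divisible by $X$; and the Smith-normal-form comparison of $\dim_{\mathbb F_p}N/(p,\omega_n)N$ over $\mathbb F_p[[X]]$ forces the quotient matrix to be a split injection. If you replace your second and third paragraphs by the paper's Corollary \ref{Z_p-ranks of Gamma_n coinvariants} --- which supplies exactly the hypothesis that $N/\omega_nN$ is $\mathbb Z_p$-free of rank $dp^n+\delta$ (resp.\ $dp^n$) for all $n$ --- then your lemma yields Theorem \ref{supplementary theorem} in one stroke, and this would be a genuinely cleaner endgame than the paper's route through the structure theorem (Proposition \ref{key proposition}) and the torsion-quotient comparison (Lemma \ref{supplementary lemma}). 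But it is only the endgame; the theorem's heart is the coinvariant computation you skipped.
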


	\bigskip
	
	The outline of this paper is as follows.
	In Section 2,
		we define the plus and the minus Selmer groups following Kobayashi,
		and fix a global setting.
	In Section 3,
		we study the local conditions in a local setting.
	Subsection 3.1 is a preparation for the rest of Section 3.
	In Subsection 3.2,
		we give a description of $E^{\pm} (k_n)^{\chi}$
		in terms of formal groups and a system of local points.
	In Subsection 3.3,
		we study the $\Lambda$-modules $(E^{\pm}(k_{\infty}) ^{\chi} \otimes \mathbb Q_p/\mathbb Z_p)^{\vee}$
		and $(H^1(k_{\infty},E[p^{\infty}])/(E^{\pm}(k_{\infty}) \otimes \mathbb Q_p/\mathbb Z_p))^{\vee}$.
	In Subsection 3.4,
		we further determine the explicit structure of the $\Lambda$-module
		$(E^{\pm}(k_{\infty}) ^{\chi} \otimes \mathbb Q_p/\mathbb Z_p)^{\vee}$.
	In Section 4, we study finite $\Lambda$-submodules of the Pontryagin duals of the Selmer groups
		$\Sel(F_{\infty},E[p^{\infty}])^{\vee}$ and $\Sel^{\pm}(F_{\infty},E[p^{\infty}])^{\vee}$.
	In Subsection 4.1, we prove
		that the usual $p$-primary Selmer group has no nontrivial finite $\Lambda$-submodule
		under the same assumption with the main theorem.
	In this step, it is essential to assume that
		both $\Sel ^{\pm}(F_{\infty}, E[p^{\infty}])^{\vee}$ are $\Lambda$-torsion.
	In Subsection 4.2, we prove our main theorem.

\section{The plus and the minus Selmer groups}
	
	Let $p$ be a prime number,
		$F$ a finite extension of $\mathbb Q$,
		and $E$ an elliptic curve defined over $F$.
	For a finite extension $K/F$,
		the $p$-primary Selmer group for $E$ over $K$
		is defined by
		\begin{eqnarray*}
			&&\Sel (K,E[p^{\infty}])
			:= \Ker \left(
				H^1(K,E[p^{\infty}]) \longrightarrow
				\prod _{v} \frac{H^1(K_{v}, E[p^{\infty}])}
				{E(K_{v}) \otimes \mathbb Q_p/\mathbb Z_p}\right),
		\end{eqnarray*}
		where $v$ runs through all places of $K$,
		$K_{v}$ is the completion of $K$ at the place $v$,
		and $E(K_{v}) \otimes \mathbb Q_p/\mathbb Z_p$ is regarded
		as a subgroup of $H^1(K_{v}, E[p^{\infty}])$ by the Kummer map.
	For a number field $K$ that is an infinite extension of $F$,
		we define the $p$-primary Selmer group for $E$ over $K$ by
		$$
			\Sel (K, E[p^{\infty}])
			:= \varinjlim_{K'} \Sel (K',E[p^{\infty}]),
		$$
		where $K'$ runs through all the subfields of $K$
		which are finite extensions of $F$,
		and transition maps are restriction maps
		between cohomology groups.
	
	We denote $F_n = F(\mu _{p^{n+1}})$, $F_{-1}=F$ and $F_{\infty}= \bigcup _n F_n$,
		where $\mu_{p^n}$ denotes the group of $p^n$-th roots of unity.
		We fix a generator $(\zeta_{p^n})$ of $\mathbb Z_p(1)$,
		namely, for each $n \geq 0$, $\zeta_{p^n}$ is a primitive $p^n$-th root of unity
		such that $\zeta_{p^{n+1}}^p=\zeta_{p^n}$.
	
	Then by definition,
		we have
		$$
			\Sel (F_{\infty},E[p^{\infty}])
			= \underset{\substack{\longrightarrow \\ n}}{\lim}\ \Sel (F_n, E[p^{\infty}]).
		$$

	Throughout this paper,
		we fix the following notations:
		\begin{itemize}
		\item
		$p$ is an odd prime number,
			
		\item
		$F$ is a finite extension of $\mathbb Q$,
		
		\item
		$E$ is an elliptic curve defined over a subfield $F'$ of $F$.
		\end{itemize}
	
	Denote $S_p^{\rm ss}$ the set of all primes of $F'$ lying above $p$
		where $E$ has supersingular reduction.
	
	Throughout this paper,
		we assume the following:
		\begin{itemize}
		\item
		$E$ has good reduction at any prime $w|p$ of $F'$,
		
		\item
		$S_p^{\rm ss}$ is nonempty,
		
		\item
		any prime $w \in S_p^{\rm ss}$ is unramified in $F$,
		
		\item
		$F'_w=\mathbb Q_p$ for any prime $w \in S_p^{\rm ss}$,
			where $F'_w$ is the completion of $F'$ at the prime $w$, and
		
		\item
		$a_w=1+p -\# \widetilde{E}_w (\mathbb F_p)=0$
			for any prime $w \in S_p^{\rm ss}$,
			where $\widetilde{E}_w$ is the reduction of $E$ at $w$.
		\end{itemize}
	
	When $p \geq 5$, the condition $a_w=0$ is automatically satisfied
		since we have $p|a_w$ and $|a_w| \leq 2 \sqrt{p}$.\\
	
	Denote $S_{p,F}^{\rm ss}$ the set of all primes of $F$ lying above $S_p^{\rm ss}$.
	
	Following S. Kobayashi \cite{Kob03}
		we define subgroups $E^+(F_{n,v})$ and $E^-(F_{n,v})$ of $E(F_{n,v})$
		for each prime $v \in S_{p,F}^{\rm ss}$,
		and define plus and minus Selmer groups $\Sel^{\pm}(F_n, E[p^{\infty}])$,
		$\Sel^{\pm}(F_{\infty},E[p^{\infty}])$ as the following
		(see also \cite{Kim07} and \cite{MKim11}).
	
	\begin{defn}\label{plus and minus subgroups}
		(1)
		For a prime $v \in S_{p,F}^{\rm ss}$ and $n \geq -1$,
			let $F_{n,v}$ be the completion of $F_n$ at the unique prime of $F_n$
			lying above $v$.
		We define
		\begin{eqnarray*}
			&&E^+ (F_{n,v}) = \{ P \in E(F_{n,v})
				| \Tr_{n/{m+1}} P \in E(F_{m,v})
				\text{ for all even } m, -1 \leq m \leq n-1 \}, \\
			&&E^- (F_{n,v}) = \{ P \in E(F_{n,v})
				| \Tr_{n/{m+1}} P \in E(F_{m,v})
				\text{ for all odd } m, -1 \leq m \leq n-1 \}
		\end{eqnarray*}
		where $\Tr _{n/m+1}: E(F_{n,v}) \rightarrow E(F_{m+1,v})$ is the trace map.
		
		(2) The plus and the minus Selmer groups are defined by
		\begin{eqnarray*}
			&&\Sel ^{\pm}(F_n, E[p^{\infty}])
			:= \Ker \left( \Sel (F_n, E[p^{\infty}]) \longrightarrow
				\bigoplus _{v \in S_{p,F}^{\rm ss}}
				\frac{H^1(F_{n,v},E[p^{\infty}])}
				{E^{\pm}(F_{n,v}) \otimes \mathbb Q_p/\mathbb Z_p} \right),\\
			&&\Sel ^{\pm}(F_{\infty}, E[p^{\infty}])
			:= \underset{\substack{\longrightarrow \\ n}}{\lim}\ \Sel^{\pm} (F_n, E[p^{\infty}]).
		\end{eqnarray*}
	\end{defn}
	
	We denote the Pontryagin dual of a module $M$ by $M^{\vee}$.
	Let $\mathcal G_n = \Gal (F_n/F)$ and $\mathcal G_{\infty}= \Gal (F_{\infty}/F)$.
	Then $\mathbb Z_p[\mathcal G_n]$ acts naturally
		on $\Sel^{\pm} (F_n,E[p^{\infty}])^{\vee}$
		and $\Lambda ({\mathcal G_{\infty}}) := \mathbb Z_p[[\mathcal G_{\infty}]]$
		on $\Sel^{\pm} (F_{\infty}, E[p^{\infty}])^{\vee}$.
	
	The Pontryagin dual of the usual $p$-primary Selmer group $\Sel (F_{\infty},E[p^{\infty}])^{\vee}$
		is not a torsion $\Lambda ({\mathcal G_{\infty}})$-module, however,
		$\Sel^{\pm} (F_{\infty}, E[p^{\infty}])^{\vee}$ is known to be
		$\Lambda ({\mathcal G_{\infty}})$-torsion
		in the case $F= \mathbb Q$ (cf. \cite{Kob03} Theorem 2.2).
	
\section{The formal groups and the norm subgroups}
	
	Let $E/\mathbb Q_p$ be an elliptic curve with $a_p=0$ and
		$\widehat{E}$ the formal group over $\mathbb Z_p$
		associated with the minimal model of $E$ over $\mathbb Q_p$.
	Let $k$ be a finite unramified extension of $\mathbb Q_p$
		of degree $d=[k:\mathbb Q_p]$
		and $\mathcal O_k$ the ring of integers of $k$.
	For each $n \geq -1$,
		let $k_n = k(\mu _{p^{n+1}})$
		and $\mathfrak m_n$ be the maximal ideal of $k_n$.
	Let $k_{\infty} = \bigcup _{n \geq -1} k_n$
		and $\mathfrak m_{\infty} = \bigcup _{n \geq -1} \mathfrak m_n$.
	Let $G_n = \Gal (k_n/\mathbb Q_p)$,
		$G_{\infty} = \Gal (k_{\infty}/\mathbb Q_p)$,
		$\Gamma _n = \Gal (k_{\infty}/k_n)$,
		$\Gamma = \Gamma _0 (= \Gal (k_{\infty}/k_0))$
		and $\Delta = \Gal (k(\mu_p)/k)=\Gal (k_0/k_{-1})$.
	Let $\varphi$ be the Frobenius homomorphism in $\Gal (k/\mathbb Q_p) = G_{-1}$
	characterized by $x^{\varphi} \equiv x^p$ (mod $p\mathcal O_k$).
	We denote $\Lambda = \mathbb Z_p[[\Gamma]]$.
	We fix a topological generator $\gamma \in \Gamma$.
	Then we identify $\mathbb Z_p[[\Gamma]]$
		with $\mathbb Z_p[[X]]$,
		and $\mathbb Z_p[[G_{\infty}]]$ with $\mathbb Z_p[G_0][[X]]$
		by identifying $\gamma$ with $1+X$.
	
	\subsection{The formal groups associated to $E$}
	
	\begin{prop}\label{E^ is torsion-free}
		For any $n$, $\widehat{E}(\mathfrak m_n)$ is $\mathbb Z_p$-torsion-free.
	\end{prop}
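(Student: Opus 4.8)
The plan is to reduce the statement to the absence of nonzero $p$-torsion, and then to rule that out by a valuation (Newton polygon) argument that exploits the supersingular hypothesis.

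First I would record that $\widehat E(\mathfrak m_n)$ is a pro-$p$ abelian group, hence a $\mathbb Z_p$-module: the standard filtration by the subgroups $\widehat E(\mathfrak m_n^i)$ has successive quotients that are subquotients of the additive group of the residue field of $k_n$, which is killed by $p$. Consequently every torsion element has $p$-power order, and if there were a nonzero element of order $p^m$ with $m$ minimal, then applying $[p^{m-1}]$ would produce a nonzero point of exact order $p$ lying in the same field $k_n$. It therefore suffices to show that $\widehat E(\mathfrak m_n)$ contains no nonzero point of order $p$.

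Next I would pin down the valuation of the nonzero $p$-torsion points of $\widehat E$. Since $E$ has supersingular reduction at $p$, the formal group $\widehat E$ has height $2$, so
\begin{eqnarray*}
	[p](X) = pX + (\text{terms with coefficients divisible by } p) + u X^{p^2} + \cdots
\end{eqnarray*}
with $u \in \mathbb Z_p^{\times}$; equivalently, writing $[p](X) = X \cdot h(X)$, the power series $h$ has constant term $p$ and unit coefficient at $X^{p^2-1}$. The Newton polygon of $h$ then has a single relevant edge joining $(0,1)$ to $(p^2-1,0)$, because every intermediate coefficient is divisible by $p$ and hence lies strictly above this segment. Thus each of the $p^2-1$ nonzero $p$-torsion points has normalized valuation $1/(p^2-1)$, where the valuation is normalized so that $p$ has valuation $1$.

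Finally I would compare this with the value group of $k_n$. As $k/\mathbb Q_p$ is unramified and $\mathbb Q_p(\mu_{p^{n+1}})/\mathbb Q_p$ is totally ramified of degree $p^n(p-1)$, the ramification index of $k_n/\mathbb Q_p$ equals $e = p^n(p-1)$, so every element of $k_n$ has valuation in $\tfrac1e \mathbb Z$. A nonzero $p$-torsion point in $\mathfrak m_n$ would force $1/(p^2-1) \in \tfrac1{p^n(p-1)}\mathbb Z$, that is $p^n/(p+1) \in \mathbb Z$; but $\gcd(p,p+1)=1$ and $p+1 > 1$, so this is impossible. Hence $\widehat E(\mathfrak m_n)$ has no nonzero $p$-torsion, and by the first reduction it is $\mathbb Z_p$-torsion-free. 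I expect the only delicate point to be the second step, namely justifying that the height-$2$ structure forces the coefficient of $X^{p^2}$ in $[p](X)$ to be a unit while all lower coefficients are divisible by $p$, so that the Newton polygon edge and hence the valuation $1/(p^2-1)$ are exactly as claimed; the concluding divisibility check is immediate.
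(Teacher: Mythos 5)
Your proof is correct and takes essentially the same approach as the paper: the paper's proof is simply a citation of Kobayashi's Proposition 8.7, whose method is exactly your Newton-polygon/valuation argument that nonzero $p$-torsion points of a height-$2$ formal group over $\mathbb Z_p$ have valuation $1/(p^2-1)$, incompatible with the value group of $k_n$. The only cosmetic point is the case $n=-1$, where the ramification index is $1$ rather than $p^n(p-1)$; the conclusion $1/(p^2-1) \notin \mathbb Z$ is then trivial (or note that $\widehat{E}(\mathfrak m_{-1}) \subset \widehat{E}(\mathfrak m_0)$), so nothing is lost.
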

	
	\begin{proof}
		We can prove this by the same method as the proof of \cite{Kob03} Proposition 8.7.
	\end{proof}
	
	The above proposition implies that
		the formal logarithm $\log _{\widehat{E}} (X)$
		induces an injective homomorphism
		$\log _{\widehat{E}}:\widehat{E}(\mathfrak m_n) \rightarrow k_n$
		for all $n$,
		since the kernel of the logarithm of a formal group
		precisely consists of the elements of finite order.
		
	For such a one-dimensional formal group $\mathscr F$
		defined over $\mathbb Z_p$ with height 2,
		the formal logarithm $\log _{\mathscr F}$
		induces isomorphisms as in the following proposition
		(cf. The proof of Proposition 2.1 in \cite{Kurihara02},
		and Lemma 2.4 in \cite{Hazewinkel77}).
	\begin{prop}\label{log isoms 1}
		Let $\mathscr F$ be a one-dimensional formal group defined over $\mathbb Z_p$
			with height 2.
		For a finite extension $K/\mathbb Q_p$,
			denote by $\mathfrak m_K$ its maximal ideal.
		Then the logarithm
			$\log _{\mathscr F} : \mathscr F (\mathfrak m_K) \rightarrow K$
			induces isomorphisms
		\begin{eqnarray*}
			\log _{\mathscr F}:\mathscr F(\mathfrak m_K^j)
			\overset{\simeq}{\longrightarrow}
			\mathfrak m_K^j
		\end{eqnarray*}
		for all $j > v_K(p)/(p^2-1)$,
			where $v_K$ is the normalized valuation of $K$
			so that $v_K(\pi _K)=1$ for a uniformizer $\pi _K$ of $K$.
	\end{prop}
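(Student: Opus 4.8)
The plan is to realize the asserted isomorphism through the formal logarithm $\log_{\mathscr F}(X) = \sum_{n \geq 1} a_n X^n$ (normalized by $a_1 = 1$) together with its compositional inverse, the formal exponential $\exp_{\mathscr F}(X) = \sum_{n \geq 1} b_n X^n$ with $b_1 = 1$. Note that $\mathscr F(\mathfrak m_K^j)$ and $\mathfrak m_K^j$ share the same underlying set, namely the elements of $K$ of valuation at least $j$, and that $\log_{\mathscr F}$ is a homomorphism of formal groups $\mathscr F \to \mathbb G_a$. It therefore suffices to show that, for $j > v_K(p)/(p^2-1)$, both series converge on $\mathfrak m_K^j$, map this set bijectively onto itself, and are mutually inverse there; the group structures then correspond automatically, since $\log_{\mathscr F}$ carries the formal group law to ordinary addition.

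Everything hinges on the arithmetic of the coefficients $a_n$ and $b_n$, which is controlled by the height being $2$. Set $e = v_K(p)$. By the theory of the logarithm of a height-$2$ formal group over $\mathbb Z_p$ (Honda's functional equation; cf. the proof of Proposition 2.1 in \cite{Kurihara02} and Lemma 2.4 in \cite{Hazewinkel77}) one has the integrality bounds
\[
v_p(a_n) \geq -\lfloor \log_{p^2} n \rfloor, \qquad v_p(b_n) \geq -\frac{n-1}{p^2-1}, \qquad n \geq 1.
\]
The mechanism behind the first bound is that $\log_{\mathscr F}$ satisfies a functional equation in which denominators enter only through repeated substitution of $X^{p^2}$, so a factor $p^{-i}$ can appear in $a_n$ only once $p^{2i} \leq n$; the second bound is the matching estimate for the inverse power series. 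The appearance of $p^2$ (rather than the $p$ and $p-1$ of the multiplicative, height-one, case) reflects the height being $2$, and is the origin of the denominator $p^2-1$ in the statement. I expect the establishment, or correct invocation, of these two estimates — above all the bound on $v_p(b_n)$ — to be the main obstacle, as this is exactly where the height-$2$ hypothesis enters essentially.

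Granting the estimates, the valuation bookkeeping proceeds as follows. Let $x \in K$ with $m := v_K(x) \geq j$; since the coefficients lie in $\mathbb Q_p$ we have $v_K(a_n) = e\, v_p(a_n)$ and $v_K(b_n) = e\, v_p(b_n)$, whence
\[
v_K(a_n x^n) \geq nm - e\lfloor \log_{p^2} n \rfloor, \qquad v_K(b_n x^n) \geq nm - e\,\frac{n-1}{p^2-1}.
\]
The elementary inequality $\lfloor \log_{p^2} n \rfloor \leq (n-1)/(p^2-1)$ (valid because $1 + p^2 + \cdots + p^{2(k-1)} \geq k$) shows both right-hand sides are at least $nm - e(n-1)/(p^2-1)$. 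For $n \geq 2$ this lower bound exceeds the degree-one value $m$ by $(n-1)\bigl(m - e/(p^2-1)\bigr)$, which is strictly positive by the hypothesis $m \geq j > e/(p^2-1)$. Hence in each of $\log_{\mathscr F}(x)$ and $\exp_{\mathscr F}(x)$ the degree-one term strictly dominates: the series converge and $v_K(\log_{\mathscr F}(x)) = v_K(\exp_{\mathscr F}(x)) = m$.

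It remains to assemble these facts. The valuation identities show that $\log_{\mathscr F}$ and $\exp_{\mathscr F}$ each send $\mathfrak m_K^j$ into itself while preserving valuation, hence are injective on it. As power series over $\mathbb Q_p$ they are mutually inverse, and the convergence estimates justify substituting one into the other on $\mathfrak m_K^j$, so they are mutually inverse bijections of $\mathfrak m_K^j$. Finally, $\mathscr F(\mathfrak m_K^j)$ is a subgroup of $\mathscr F(\mathfrak m_K)$ because the formal group law is integral and $\mathfrak m_K^j$ is an ideal, and $\log_{\mathscr F}$ carries its group law to ordinary addition on $\mathfrak m_K^j$; therefore $\log_{\mathscr F} : \mathscr F(\mathfrak m_K^j) \to \mathfrak m_K^j$ is a group isomorphism, as claimed.
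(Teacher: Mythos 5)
Your argument is correct, and it supplies in full what the paper does not even sketch: the paper states this proposition with no proof, deferring entirely to the proof of Proposition 2.1 in \cite{Kurihara02} and Lemma 2.4 in \cite{Hazewinkel77}, and the standard argument contained in those references is precisely your valuation bookkeeping based on coefficient estimates for $\log_{\mathscr F}$ and $\exp_{\mathscr F}$. Two remarks. First, the step you single out as the main obstacle, the bound $v_p(b_n)\ge -(n-1)/(p^2-1)$ for the inverse series, is not an independent input: it follows from the logarithm bound by a one-line induction. Indeed, since $\lfloor \log_{p^2} n\rfloor \le (n-1)/(p^2-1)$, the logarithm coefficients already satisfy the weaker bound $v_p(a_n)\ge -(n-1)/(p^2-1)$; writing the recursion for the compositional inverse, $b_N=-\sum_{n=2}^{N} a_n \sum_{m_1+\cdots+m_n=N} b_{m_1}\cdots b_{m_n}$ with all $m_i<N$, the inductive hypothesis gives each term valuation at least $-(n-1)/(p^2-1)-(N-n)/(p^2-1)=-(N-1)/(p^2-1)$, which is the claim. (The bound $v_p(a_n)\ge -\lfloor\log_{p^2}n\rfloor$ itself holds for any height-$2$ formal group over $\mathbb Z_p$ by induction on the Honda functional equation $f(X)=g(X)+\sum_i c_i f(X^{p^i})$, where $g\in\mathbb Z_p[[X]]$, $c_1\in\mathbb Z_p$ and $v_p(c_i)\ge -1$ for $i\ge 2$, the height-$2$ hypothesis being exactly the integrality of $c_1$.) So the obstacle you flag dissolves, and your proof becomes self-contained. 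Second, a small logical slip: ``preserving valuation, hence injective'' is not a valid inference, since a valuation-preserving map need not be injective; but nothing is lost, because injectivity and surjectivity both follow from the mutual-inverse property you establish immediately afterwards (or, if one wants injectivity directly, from the estimate $v_K(\log_{\mathscr F}(x)-\log_{\mathscr F}(y))=v_K(x-y)$, proved by applying the same domination argument to the series for $\log_{\mathscr F}(x)-\log_{\mathscr F}(y)$).
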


	Following \cite{Kim07} and \cite{MKim11} we construct
		a system of local points $(d_n)_{n}$.
	
	Fix a generator $\zeta$ of the group of roots of unity in $k$.
	Then $\zeta$ is a primitive $(p^d-1)$-th root of unity,
		and we have $k = \mathbb Q_p (\zeta)$.
	
	Let $g(X) = (X+ \zeta)^p-\zeta ^p \in \mathcal O_k[X]$,
		$g^{(m)}(X) = g^{\varphi ^{m-1}} \circ g^{\varphi ^{m-2}} \circ \cdots \circ g(X)
		=(X+\zeta)^{p^m}-\zeta ^{p^m}$
		for $m \geq 1$ and $g^{(0)}(X)=X$.
	We define a formal power series $\log _{\mathscr G}(X)$ by
	$$
		\log _{\mathscr G}(X) = \sum _{m=0}^{\infty} (-1)^m \frac{g^{(2m)}(X)}{p^m}.
	$$
	We can check that
	\begin{eqnarray*}
		(\log _{\mathscr G}^{\varphi ^{-(n+1)}})^{\varphi ^2}(X^{p^2})
		+ p\log _{\mathscr G}^{\varphi ^{-(n+1)}}(X) \equiv 0\ (\text{mod } p)
	\end{eqnarray*}
	and $(\log _{\mathscr G}^{\varphi ^{-(n+1)}})'(X) \in \mathcal O_k[[X]]$ for each $n$.
	This means that $\log _{\mathscr G}^{\varphi ^{-(n+1)}}(X)$
		is of the Honda type $t^2+p$ for each $n$.
	Hence by Honda theory (cf. \cite{Hon70}) we see that
	\begin{itemize}
	\item there is a formal group $\mathscr G_n$
		defined over $\mathcal O_k$
		whose formal logarithm $\log _{\mathscr G_n}$
		is given by $\log _{\mathscr G}^{\varphi ^{-(n+1)}}$
		for each $n$, and
	\item the power series $\exp _{\widehat{E}} \circ \log _{\mathscr G_n}$
		is contained in $\mathcal O_k[[X]]$ and gives an isomorphism
		$\mathscr G_n \rightarrow \widehat{E}$ over $\mathcal O_k$ for each $n$.
	\end{itemize}
	
	We fix a generator $(\zeta _{p^n})$ of $\mathbb Z_p(1)$,
		namely, for each $n \geq 0$,
		$\zeta _{p^n}$ is a primitive $p^n$-th root of unity
		such that $\zeta _{p^{n+1}}^p = \zeta _{p^n}$.
	Let $\pi _n = \zeta ^{\varphi ^{-(n+1)}} (\zeta _{p^{n+1}}-1)
		\in \mathfrak m_n$ for $n \geq -1$
		and $\pi _n =0$ for $n < -1$.
	For each $n$, we can easily show that
	\begin{eqnarray}\label{a system of uniformizers}
	g^{(m),\varphi ^{-(n+1)}} (\pi _n) = \pi _{n-m}
	\end{eqnarray}
	for any $m \geq 0$ by direct calculation.
	
	Put
	\begin{eqnarray*}
		\varepsilon _n &=& \zeta ^{\varphi ^{-(n+3)}}p
			- \zeta ^{\varphi ^{-(n+5)}}p^2
			+ \zeta ^{\varphi ^{-(n+7)}}p^3 - \cdots \\
		&=& \sum _{i=1}^{\infty} (-1)^{i-1}
			\zeta ^{\varphi ^{-(n+1+2i)}}p^i \in \mathfrak m_k
	\end{eqnarray*}
	for $n \geq -1$.
	Since $\log _{\mathscr G_n} : \mathscr G_n(\mathfrak m_k) \rightarrow \mathfrak m_k$
		is an isomorphism  for all $n$ (cf. Proposition \ref{log isoms 1}),
		there is $\epsilon _n \in \mathscr G_n(\mathfrak m_k)$
		such that $\log _{\mathscr G_n}(\epsilon _n) = \varepsilon _n$
		foe $n \geq -1$.

	\begin{defn}\label{a system of local points following B.D. Kim and M. Kim}
		We define
		$$
			d_n = \exp _{\widehat{E}} \circ \log _{\mathscr G_n}
			(\epsilon _n [+]_{\mathscr G_n} \pi_n)
		$$
		for $n \geq -1$,
			where $[+]_{\mathscr G_n}$ is the addition of $\mathscr G_n$.
	\end{defn}

	For $n \geq m$, we denote by $\Tr_{n/m}:\widehat{E}(\mathfrak m_n)
	\rightarrow \widehat{E}(\mathfrak m_m)$ the trace (norm)
	with respect to the group-law $\widehat{E}(X,Y)$.

	\begin{prop}\label{trace condition}
		The system of local points
		$( d_n ) _n \in \prod _{n \geq -1} \widehat{E}(\mathfrak m_n)$
		satisfies
		
		(1) $\Tr _{n/n-1}(d_n)=-d_{n-2}$ for each $n \geq 1$,
		
		(2) $\Tr _{0/-1}(d_0)=-(\varphi + \varphi ^{-1}) d_{-1}$.
	\end{prop}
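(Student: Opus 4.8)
The plan is to verify both identities after applying the formal logarithm $\log_{\widehat{E}}$. By Proposition \ref{E^ is torsion-free} the map $\log_{\widehat{E}}$ is injective on each $\widehat{E}(\mathfrak m_n)$, so it suffices to compare logarithms. Since $\exp_{\widehat{E}} \circ \log_{\mathscr G_n}$ is the isomorphism $\mathscr G_n \to \widehat{E}$ and $\log_{\widehat{E}} \circ \exp_{\widehat{E}} = \id$, we get $\log_{\widehat{E}}(d_n) = \log_{\mathscr G_n}(\epsilon_n [+]_{\mathscr G_n} \pi_n) = \varepsilon_n + \log_{\mathscr G_n}(\pi_n)$, using that $\log_{\mathscr G_n}$ turns $[+]_{\mathscr G_n}$ into ordinary addition and that $\log_{\mathscr G_n}(\epsilon_n)=\varepsilon_n$. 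Feeding $\pi_n$ into the series $\log_{\mathscr G_n} = \log_{\mathscr G}^{\varphi^{-(n+1)}}$ and applying the relation (\ref{a system of uniformizers}) term by term yields
\[
	\log_{\widehat{E}}(d_n) = \varepsilon_n + \sum_{m=0}^{\infty} (-1)^m \frac{\pi_{n-2m}}{p^m},
\]
which is in fact a finite sum because $\pi_j = 0$ for $j < -1$.

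Because $\log_{\widehat{E}}$ has coefficients in $\mathbb Q_p$, it commutes with the Galois action, so after taking logarithms the trace $\Tr_{n/n-1}$ becomes the sum over $\Gal(k_n/k_{n-1})$. For $n \geq 1$ this group has order $p$; the terms $\varepsilon_n$ and $\pi_{n-2m}$ with $m \geq 1$ already lie in $k_{n-1}$ and are fixed, each contributing a factor $p$, whereas the leading term satisfies $\Tr_{k_n/k_{n-1}}(\pi_n) = -p\,\zeta^{\varphi^{-(n+1)}}$, which follows from $\sum_{\sigma}\sigma(\zeta_{p^{n+1}}) = 0$ (vanishing of the relevant sum of $p$-th roots of unity). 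Collecting terms and shifting the index $m \mapsto m+1$ in the $\pi$-sum, the fixed part reproduces exactly $-(\text{the }\pi\text{-part of }\log_{\widehat{E}}(d_{n-2}))$, so part (1) reduces to the purely algebraic recursion
\[
	p\,\varepsilon_n + \varepsilon_{n-2} = p\,\zeta^{\varphi^{-(n+1)}}.
\]

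This recursion is checked by a direct comparison of the defining series: multiplying $\varepsilon_n$ by $p$ shifts its index so that all terms of $p\,\varepsilon_n$ cancel against the higher terms of $\varepsilon_{n-2}$, leaving only the leading contribution $p\,\zeta^{\varphi^{-(n+1)}}$. Part (2) is parallel but uses $\Delta = \Gal(k_0/k)$ of order $p-1$. Here $\pi_{-1} = 0$ forces $\log_{\widehat{E}}(d_{-1}) = \varepsilon_{-1}$ and $\log_{\widehat{E}}(d_0) = \varepsilon_0 + \pi_0$. Computing the $\Delta$-trace of $\varepsilon_0 + \pi_0$ via $\sum_{a=1}^{p-1}(\zeta_p^a - 1) = -p$ gives $(p-1)\varepsilon_0 - p\,\zeta^{\varphi^{-1}}$, and recognizing the Frobenius twists $\varepsilon_0 = \varphi^{-1}(\varepsilon_{-1})$ and $\varepsilon_{-2} = \varphi(\varepsilon_{-1})$, the desired equality $\Tr_{0/-1}(d_0) = -(\varphi + \varphi^{-1})d_{-1}$ becomes the same recursion evaluated at $n=0$, namely $p\,\varepsilon_0 + \varphi(\varepsilon_{-1}) = p\,\zeta^{\varphi^{-1}}$.

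I expect the main obstacle to be the bookkeeping in the second step: one must correctly isolate which $\pi_j$ survive the trace, track the powers of $p$ introduced by the fixed terms against the vanishing of the top term, and reindex the finite $\pi$-series so that it matches $\log_{\widehat{E}}(d_{n-2})$ with the right sign. Once this is organized, both parts collapse to the single recursion for $\varepsilon_n$, whose verification is routine. The genuinely new feature is the appearance of $\varphi + \varphi^{-1}$ in part (2) in place of a plain sign, and this is explained entirely by the Frobenius twists relating the three relevant points $d_{-2}$, $d_{-1}$, $d_0$ through $\varepsilon_{-2} = \varphi(\varepsilon_{-1})$ and $\varepsilon_0 = \varphi^{-1}(\varepsilon_{-1})$.
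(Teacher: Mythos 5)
Your proof is correct and follows essentially the same route as the paper: both reduce the identities to statements about $\log_{\widehat{E}}(d_n)=\varepsilon_n+\sum_m(-1)^m\pi_{n-2m}/p^m$ via injectivity and Galois-equivariance of the logarithm, then compute the trace term by term using $\Tr(\pi_n)=-p\,\zeta^{\varphi^{-(n+1)}}$ and the fact that the lower terms are fixed. Your repackaging of the final step as the single recursion $p\,\varepsilon_n+\varepsilon_{n-2}=p\,\zeta^{\varphi^{-(n+1)}}$ (with $\varepsilon_0=\varphi^{-1}(\varepsilon_{-1})$, $\varepsilon_{-2}=\varphi(\varepsilon_{-1})$ handling part (2)) is just a tidy reorganization of the same series manipulation the paper performs directly.
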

	
	\begin{proof}
		We prove this by the same method as the proof of Lemma 8.9 in \cite{Kob03}.
		Since $\log _{\widehat{E}}$ is injective
			(cf. Proposition \ref{E^ is torsion-free})
			and commute with the action of $G_n$ on $\widehat{E}(\mathfrak m_n)$,
			it is enough to show that the relation holds
			after applying $\log _{\widehat{E}}$
			to both sides of the equality.
		
		We have
		\begin{eqnarray*}
			\log_{\widehat{E}} (d_n)
			&=& \log _{\mathscr G_n} (\epsilon _n
				[+]_{\mathscr G_n} \pi _n)\\
			&=& \log_{\mathscr G_n}(\epsilon _n)
				+ \log_{\mathscr G_n}(\pi _n)\\
			&=& \varepsilon _n
				+ \sum _{m=0}^{[\frac{n+1}{2}]} (-1)^m \frac{\pi _{n-2m}}{p^m}.
		\end{eqnarray*}
		Here the last equality follows from (\ref{a system of uniformizers})
			and $\pi _n = 0$ for $n \leq -1$.
		
		For $n \geq 1$, we have
		\begin{eqnarray*}
			\Tr_{n/n-1} \log _{\widehat{E}}(d_n)
			&=& p \varepsilon _n
				- \zeta ^{\varphi ^{-(n+1)}} p
				+ \sum _{m=1}^{[\frac{n+1}{2}]}(-1)^m \frac{\pi _{n-2m}}{p^{m-1}}\\
			&=& - \varepsilon _{n-2} - \sum _{m=0}^{[\frac{n-1}{2}]}
				(-1)^m \frac{\pi _{n-2-2m}}{p^m}\\
			&=& - \log _{\widehat{E}} (d_{n-2}).
		\end{eqnarray*}
		
		For $n=0$, we have
		\begin{eqnarray*}
			\Tr _{0/-1} \log _{\widehat{E}}(d_0)
			&=& (p-1)\varepsilon _0 - \zeta ^{\varphi ^{-1}}p \\
			&=& -(\varphi + \varphi ^{-1}) \varepsilon_{-1} \\
			&=& -(\varphi + \varphi ^{-1}) \log _{\widehat{E}} (d_{-1}).
		\end{eqnarray*}
	\end{proof}

	\begin{rem}\label{varphi + varphi ^{-1} always appears}
		\rm{As long as we define local points as values
			of certain power series at certain points,
			the factor $\varphi + \varphi ^{-1}$
			in the condition (2) always appears
			(cf. \cite{Kob13} Proposition 3.10, (3.3)).
		Although B.D. Kim did not mention explicitly in \cite{Kim07}, \cite{Kim13},
			this factor $\varphi + \varphi^{-1}$ was an obstruction.
		He assumed in \cite{Kim07} and \cite{Kim13}
			that $k=\mathbb Q_p$
			when he considered the plus Selmer groups
			in order to make the situation simpler, i.e. $\varphi + \varphi ^{-1}=2$
			in $\mathbb Z_p[G_{-1}] = \mathbb Z_p$.
		In this paper, we consider general unramified extension $k/\mathbb Q_p$,
			carefully taking into account this factor $\varphi + \varphi ^{-1}$
			in $\mathbb Z_p[G_{-1}]$.}
	\end{rem}

	\begin{lemma}\label{on varphi + varphi^{-1} and d}
		$\varphi + \varphi ^{-1}$ is a unit in $\mathbb Z_p[G_{-1}]$
		if and only if $d \nequiv 0$ (mod $4$).
	\end{lemma}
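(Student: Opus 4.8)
The plan is to reduce the whole question to a coprimality statement in $\mathbb F_p[T]$, exploiting that $G_{-1} = \Gal(k/\mathbb Q_p)$ is cyclic of order $d$ generated by $\varphi$, so that $\mathbb Z_p[G_{-1}] \cong \mathbb Z_p[T]/(T^d-1)$ via $\varphi \mapsto T$. First I would observe that $\varphi$ is a group element, hence already a unit, and write $\varphi + \varphi^{-1} = \varphi^{-1}(\varphi^2+1)$; thus $\varphi + \varphi^{-1}$ is a unit if and only if $\varphi^2+1$ is, and the problem becomes deciding when $T^2+1$ is invertible in $\mathbb Z_p[T]/(T^d-1)$.

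Next I would use that $R := \mathbb Z_p[T]/(T^d-1)$ is module-finite over $\mathbb Z_p$, hence $p$-adically complete and semilocal, with $pR$ contained in its Jacobson radical $\mathrm{rad}(R)$. Consequently an element of $R$ is a unit if and only if its reduction modulo $p$ is a unit in $R/pR = \mathbb F_p[T]/(T^d-1)$. In turn, $T^2+1$ is a unit in $\mathbb F_p[T]/(T^d-1)$ precisely when $\gcd(T^2+1,\, T^d-1) = 1$ in $\mathbb F_p[T]$, since the class of $f$ is invertible in $\mathbb F_p[T]/(g)$ exactly when $f$ and $g$ are coprime.

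Finally I would compute this gcd by passing to $\overline{\mathbb F_p}$. Since $p$ is odd, the two roots $\pm i$ of $T^2+1$ are distinct and have exact multiplicative order $4$ in $\overline{\mathbb F_p}^{\times}$. Writing $d = p^a m$ with $p \nmid m$, one has $T^d-1 = (T^m-1)^{p^a}$ in $\mathbb F_p[T]$, whose roots are exactly the $m$-th roots of unity; hence $i$ is a common root of $T^2+1$ and $T^d-1$ if and only if $i^m = 1$, i.e. $4 \mid m$, which (as $p$ is odd) is equivalent to $4 \mid d$. Therefore $T^2+1$ and $T^d-1$ are coprime if and only if $d \nequiv 0 \pmod{4}$, and combining this with the first two steps yields the claim.

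The routine parts are the standard algebra facts invoked; the one point demanding care is the reduction-modulo-$p$ criterion for units, which rests on $R$ being a finite product of complete local rings of residue characteristic $p$ — equivalently on $pR \subseteq \mathrm{rad}(R)$ — so that invertibility may be tested entirely over $\mathbb F_p$. The inseparable case $p \mid d$ in the gcd computation is the only place where one must be slightly attentive, but it presents no real difficulty because $p$ is odd, so $\gcd(4, p^a) = 1$.
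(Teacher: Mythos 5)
Your proof is correct, and it takes a genuinely different route from the paper's. The paper works entirely inside $\mathbb Z_p[G_{-1}]$ by an explicit cofactor computation: after reducing to $1+\varphi^2$ (as you do), it multiplies by the alternating sum $1-\varphi^2+\varphi^4-\cdots$ (with $d$ terms when $d$ is odd, $d/2$ terms when $d$ is even) and finds that the product equals $2$, a unit since $p$ is odd, except when $d\equiv 0\ (\text{mod }4)$, where the product is $0$ with nonzero cofactor, exhibiting $1+\varphi^2$ as a zero divisor. You instead identify $\mathbb Z_p[G_{-1}]$ with $\mathbb Z_p[T]/(T^d-1)$, test invertibility modulo $p$ --- valid because $pR\subseteq\mathrm{rad}(R)$ for this finite $\mathbb Z_p$-algebra $R$ --- and resolve $\gcd(T^2+1,\,T^d-1)$ in $\mathbb F_p[T]$ via fourth roots of unity in $\overline{\mathbb F_p}$; your handling of the inseparable case, $T^d-1=(T^m-1)^{p^a}$ with $d=p^a m$ and $p\nmid m$, is correct, and $4\mid m\Leftrightarrow 4\mid d$ precisely because $p$ is odd. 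Your method is more conceptual and generalizes immediately: the same reduction decides invertibility of any polynomial expression in $\varphi$ by a gcd computation over $\mathbb F_p$. What the paper's bare-hands computation buys is its byproducts: the explicit cofactor shows directly that $\varphi+\varphi^{-1}$ is a zero divisor when $4\mid d$ (Remark \ref{on the lemma on varphi + varphi^{-1} and d}) and produces the explicit generator $1-\varphi^2+\varphi^4-\cdots-\varphi^{d-2}$ of $\Ann_{\mathbb Z_p[G_{-1}]}(\varphi+\varphi^{-1})$ in Lemma \ref{on the annihilator of varphi + varphi^{-1}}, which is then used in the proof of Proposition \ref{pre-key proposition}; recovering that annihilator from your approach would require additional work (e.g. a Chinese remainder decomposition of $\mathbb Z_p[T]/(T^d-1)$), so the lemma's statement follows equally well from your argument, but the paper's later applications lean on the explicit form of its proof.
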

	
	\begin{proof}
		First we note that $\varphi + \varphi ^{-1} \in \mathbb Z_p[G_{-1}]^{\times}$
		if and only if $1+ \varphi ^2 \in \mathbb Z_p[G_{-1}]^{\times}$.

		If $d$ is odd, we have
		\begin{eqnarray*}
			(1+\varphi ^2)(1 - \varphi ^2+ \varphi ^4
				- \cdots + (-1)^{\frac{2d-2}{2}} \varphi ^{2d-2})
			&=& 1+(-1)^{d-1} \\
			&=& 2.
		\end{eqnarray*}
		
		If $d$ is even, we have
		\begin{eqnarray*}
			(1+ \varphi ^2)(1 - \varphi ^2 + \varphi ^4
				- \cdots + (-1)^{\frac{d-2}{2}} \varphi ^{d-2})
			&=& 1 + (-1)^{\frac{d-2}{2}}\\
			&=& \left\{
			\begin{array}{ll}
				\displaystyle
				2  & \text{if } d \nequiv 0\ (\text{mod }4), \\[3mm]
				\displaystyle
				0  & \text{if } d \equiv 0\ (\text{mod }4),
			\end{array}\right.
		\end{eqnarray*}
		and $1 - \varphi ^2 + \varphi ^4 - \cdots + (-1)^{\frac{d-2}{2}}\varphi ^{d-2}
			\neq 0$ in $\mathbb Z_p[G_{-1}]$.
		Since $2$ is invertible in $\mathbb Z_p[G_{-1}]$,
			we get the conclusion of Lemma \ref{on varphi + varphi^{-1} and d}.
	\end{proof}

	\begin{rem}\label{on the lemma on varphi + varphi^{-1} and d}
		\rm{In the proof of Lemma \ref{on varphi + varphi^{-1} and d},
			we have proved the following;
			(1) $\varphi + \varphi ^{-1}$ is a unit if $d \nequiv 0$ (mod $4$),
			(2) $\varphi + \varphi ^{-1}$ is a zero-divisor if $d \equiv 0$ (mod $4$).
		}
	\end{rem}
	
	Moreover, we can easily check the following lemma.
	\begin{lemma}\label{on the annihilator of varphi + varphi^{-1}}
		We have
		\begin{eqnarray*}
			&& \Ann _{\mathbb Z_p[G_{-1}]}(\varphi + \varphi ^{-1})\\[3mm]
			&& = \left\{
			\begin{array}{ll}
				0 & \text{if } d \nequiv 0\ (\text{mod }4),\\[3mm]
				\langle 1- \varphi ^2 + \varphi ^4 - \cdots - \varphi ^{d-2}
				\rangle _{\mathbb Z_p[G_{-1}]} & \text{if } d \equiv 0\ (\text{mod }4).
			\end{array}
			\right.
		\end{eqnarray*}
		and $\rank _{\mathbb Z_p}\Ann _{\mathbb Z_p[G_{-1}]}(\varphi + \varphi ^{-1})=2$
			if $d \equiv 0$ (mod $4$).
	\end{lemma}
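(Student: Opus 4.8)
The plan is to reduce everything to the computation of $\Ann_{\mathbb Z_p[G_{-1}]}(1 + \varphi^2)$. Since $\varphi$ is a group element it is a unit in $\mathbb Z_p[G_{-1}]$, and the identity $\varphi + \varphi^{-1} = \varphi^{-1}(1 + \varphi^2)$ gives $\Ann_{\mathbb Z_p[G_{-1}]}(\varphi + \varphi^{-1}) = \Ann_{\mathbb Z_p[G_{-1}]}(1 + \varphi^2)$. When $d \nequiv 0 \pmod 4$, Lemma \ref{on varphi + varphi^{-1} and d} together with Remark \ref{on the lemma on varphi + varphi^{-1} and d} shows that $1 + \varphi^2$ is a unit, and a unit has trivial annihilator; this settles the first case.

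For $d \equiv 0 \pmod 4$, I would identify $\mathbb Z_p[G_{-1}]$ with $R := \mathbb Z_p[T]/(T^d - 1)$ by sending $\varphi$ to $T$, using that $G_{-1} = \Gal(k/\mathbb Q_p)$ is cyclic of order $d$ generated by the Frobenius. The key elementary fact is that $T^2 + 1$ divides $T^d - 1$ in $\mathbb Z_p[T]$ precisely because $4 \mid d$ (the roots $\pm i$ of $T^2+1$ satisfy $(\pm i)^d = 1$), so one may write $T^d - 1 = (T^2 + 1)\, h(T)$ with $h(T) \in \mathbb Z_p[T]$ monic. Now a lift $f$ of an element of $R$ annihilates $1 + T^2$ if and only if $T^d - 1$ divides $f \cdot (1 + T^2)$ in the UFD $\mathbb Z_p[T]$, which, after cancelling the common factor $T^2 + 1$ in this domain, is equivalent to $h \mid f$. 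Hence $\Ann_R(1 + T^2)$ is exactly the image of the ideal $(h)$ in $R$. A short computation with a finite geometric sum then gives
\begin{eqnarray*}
  h(T) = \frac{T^d - 1}{T^2 + 1} = -\big(1 - T^2 + T^4 - \cdots - T^{d-2}\big),
\end{eqnarray*}
using that $d/2$ is even; thus the generator $h$ differs from $1 - \varphi^2 + \varphi^4 - \cdots - \varphi^{d-2}$ only by the unit $-1$, which yields the claimed description of the ideal.

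For the rank assertion I would compute $\Ann_R(\alpha)$, where $\alpha = 1 - \varphi^2 + \cdots - \varphi^{d-2} = -h$, by the same cancellation argument: $f \cdot \alpha = 0$ in $R$ iff $T^d - 1 \mid f \cdot h$ iff $(T^2 + 1) \mid f$, so $\Ann_R(\alpha) = (T^2 + 1) R$. Therefore $\langle \alpha \rangle \cong R / (T^2 + 1)R \cong \mathbb Z_p[T]/(T^2 + 1)$, which is free of $\mathbb Z_p$-rank $2$, as claimed. The only delicate point is the passage from the congruence $f \cdot (1 + T^2) \equiv 0 \pmod{T^d - 1}$ to the divisibility $h \mid f$: this rests on the fact that $\mathbb Z_p[T]$ is a domain in which one may cancel the factor $T^2 + 1$, and on the clean factorization of $T^d - 1$ that is available precisely when $4 \mid d$. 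Everything else is bookkeeping with finite geometric sums.
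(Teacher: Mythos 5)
Your proof is correct, and in fact it supplies something the paper does not: the paper states this lemma with only the remark ``we can easily check the following lemma,'' so there is no official proof to compare against. The closest the paper comes is in the proof of Lemma \ref{on varphi + varphi^{-1} and d}, where the telescoping identity $(1+\varphi^2)\left(1-\varphi^2+\varphi^4-\cdots-\varphi^{d-2}\right)=0$ for $d\equiv 0$ (mod $4$) shows only that the displayed element \emph{lies in} the annihilator. Your argument establishes the two points the paper leaves to the reader: that this element \emph{generates} the annihilator, and the rank statement. The key steps all check out: the reduction from $\varphi+\varphi^{-1}$ to $1+\varphi^2$ via the unit $\varphi^{-1}$; the identification $\mathbb Z_p[G_{-1}]\cong R:=\mathbb Z_p[T]/(T^d-1)$ (legitimate since $G_{-1}$ is cyclic of order $d$ generated by $\varphi$); the factorization $T^d-1=(T^2+1)h(T)$, available exactly when $4\mid d$; the cancellation of the nonzero factor $T^2+1$ in the domain $\mathbb Z_p[T]$, which converts the congruence $f(1+T^2)\equiv 0 \pmod{T^d-1}$ into $h\mid f$ and hence gives the reverse inclusion $\Ann_R(1+T^2)\subseteq (h)$; the sign bookkeeping $h=-\left(1-T^2+\cdots-T^{d-2}\right)$, which uses that $d/2$ is even; and finally $\Ann_R(h)=(T^2+1)R$, whence $\langle h\rangle\cong R/(T^2+1)R\cong\mathbb Z_p[T]/(T^2+1)$ is $\mathbb Z_p$-free of rank $2$. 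The case $d\nequiv 0$ (mod $4$) correctly invokes the unit statement of Lemma \ref{on varphi + varphi^{-1} and d}. In short, your write-up is a complete and correct proof of a statement the paper asserts without proof, and it is the natural argument consistent with the computation the paper performs in the neighboring lemma.
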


	To describe the quotient modules
		$\widehat{E}(\mathfrak m_n)/\widehat{E}(\mathfrak m_{n-1})$
		using the local points $(d_n)_n$,
		we prepare the following lemma.
	
	\begin{lemma}\label{on m_n/m_{n-1}}
		We have $\mathfrak m_n/\mathfrak m_{n-1}
			= \langle \pi _n \rangle _{\mathbb Z_p[G_n]}$
			for $n \geq 0$.
	\end{lemma}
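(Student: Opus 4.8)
The plan is to reduce the statement to a generation statement over a residue field via Nakayama's lemma, and then to factor that through the unramified and the totally ramified parts of $k_n/\mathbb Q_p$.

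First I would record that $\pi_n=\zeta^{\varphi^{-(n+1)}}(\zeta_{p^{n+1}}-1)$ is a uniformizer of $k_n$: the factor $\zeta^{\varphi^{-(n+1)}}$ is a root of unity, hence a unit of $\mathcal O_{k_n}$, while $\zeta_{p^{n+1}}-1$ is a uniformizer of $\mathbb Q_p(\mu_{p^{n+1}})$ and therefore of $k_n$, since $k_n/\mathbb Q_p(\mu_{p^{n+1}})$ is unramified. Thus $\mathfrak m_n=\pi_n\mathcal O_{k_n}$. The quotient $\mathfrak m_n/\mathfrak m_{n-1}$ is a finitely generated $\mathbb Z_p$-module carrying a natural $\mathbb Z_p[G_n]$-action: indeed $k_{n-1}/\mathbb Q_p$ is Galois, so $G_n$ stabilizes $\mathfrak m_{n-1}$ inside $\mathfrak m_n$. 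Since $\mathfrak m_n/\mathfrak m_{n-1}$ is finitely generated over $\mathbb Z_p$, Nakayama's lemma reduces the claim $\mathfrak m_n/\mathfrak m_{n-1}=\langle\pi_n\rangle_{\mathbb Z_p[G_n]}$ to showing that the image of $\pi_n$ generates $(\mathfrak m_n/\mathfrak m_{n-1})\otimes_{\mathbb Z_p}\mathbb F_p$ as a module over $\mathbb F_p[G_n]$.

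Next I would exploit that $k/\mathbb Q_p$ is unramified while $K_n:=\mathbb Q_p(\mu_{p^{n+1}})$ is totally ramified, and that the two are linearly disjoint with $k_n=k\cdot K_n$. This gives $\mathcal O_{k_n}=\mathcal O_k\otimes_{\mathbb Z_p}\mathcal O_{K_n}$, $\mathfrak m_n=\mathcal O_k\otimes_{\mathbb Z_p}\mathfrak m_{K_n}$, and, under the inclusion $\mathcal O_{k_{n-1}}\hookrightarrow\mathcal O_{k_n}$, $\mathfrak m_{n-1}=\mathcal O_k\otimes_{\mathbb Z_p}\mathfrak m_{K_{n-1}}$ (with the convention $\mathfrak m_{K_{-1}}=p\mathbb Z_p$ when $n=0$); hence $\mathfrak m_n/\mathfrak m_{n-1}\cong\mathcal O_k\otimes_{\mathbb Z_p}(\mathfrak m_{K_n}/\mathfrak m_{K_{n-1}})$. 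Moreover $G_n\cong\langle\varphi\rangle\times\Gal(K_n/\mathbb Q_p)$ acts diagonally on the two tensor factors, so $\mathbb F_p[G_n]\cong\mathbb F_p[\langle\varphi\rangle]\otimes_{\mathbb F_p}\mathbb F_p[\Gal(K_n/\mathbb Q_p)]$. Since $\pi_n$ is the pure tensor $\zeta^{\varphi^{-(n+1)}}\otimes(\zeta_{p^{n+1}}-1)$, the $\mathbb F_p[G_n]$-submodule it generates is the tensor product of the $\mathbb F_p[\langle\varphi\rangle]$-submodule of $\mathcal O_k/p\cong\mathbb F_{p^d}$ generated by $\overline{\zeta^{\varphi^{-(n+1)}}}$ and the $\mathbb F_p[\Gal(K_n/\mathbb Q_p)]$-submodule of $(\mathfrak m_{K_n}/\mathfrak m_{K_{n-1}})\otimes\mathbb F_p$ generated by $\overline{\zeta_{p^{n+1}}-1}$. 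So the problem splits into two independent generation statements.

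These are: (a) the Frobenius orbit $\overline\zeta,\overline\zeta^{\,p},\dots,\overline\zeta^{\,p^{d-1}}$ spans $\mathbb F_{p^d}$ over $\mathbb F_p$, that is, $\overline\zeta$ generates a normal basis of $\mathbb F_{p^d}/\mathbb F_p$; and (b) $\overline{\zeta_{p^{n+1}}-1}$ generates $(\mathfrak m_{K_n}/\mathfrak m_{K_{n-1}})\otimes\mathbb F_p$ over $\mathbb F_p[\Gal(K_n/\mathbb Q_p)]$. For (b) I would write each conjugate as $\sigma_a(\zeta_{p^{n+1}}-1)=\zeta_{p^{n+1}}^a-1=(\zeta_{p^{n+1}}-1)\,\eta_a$ with $\eta_a=(\zeta_{p^{n+1}}^a-1)/(\zeta_{p^{n+1}}-1)$ a cyclotomic unit, and combine this with the norm compatibility recorded in $(\ref{a system of uniformizers})$ to reduce (b) to the classical fact that the cyclotomic units span $\mathcal O_{K_n}$ modulo $\mathfrak m_{K_{n-1}}$. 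I expect the main obstacle to be statement (a): a multiplicative generator of the group of roots of unity of $k$ need not be a normal basis generator of the residue field (the Frobenius conjugates can be $\mathbb F_p$-linearly dependent, for instance when their trace vanishes), so this point must be established for the $\zeta$ at hand rather than taken for granted; the wildly ramified cyclotomic computation in (b) is the secondary technical difficulty.
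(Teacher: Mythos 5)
Your reduction is correct as far as it goes, and it is essentially the paper's own argument in a cleaner packaging: the paper proves the lemma by expanding elements of $\mathcal O_{k_n}=\mathcal O_k[\zeta_{p^{n+1}}]$ over the conjugates $(\zeta_{p^m}-1)^{\tau}$ --- this is exactly your statement (b), and it is handled there by that expansion alone, so the detour through cyclotomic units you sketch is unnecessary --- and then absorbs the $\mathcal O_k$-coefficients into Galois conjugates by citing the equality $\mathcal O_k=\mathbb Z_p[\zeta]=\langle\zeta\rangle_{\mathbb Z_p[G_{-1}]}$, which is exactly your statement (a). So your proposal and the paper's proof stand or fall together at (a). Since your proposal establishes neither (a) nor (b), it is not yet a proof; the gap is concentrated in (a), which you explicitly leave open.

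That gap is genuine, and your suspicion is exactly right --- indeed it is a gap in the paper itself, which asserts the second equality above with no justification. The equality $\mathcal O_k=\langle\zeta\rangle_{\mathbb Z_p[G_{-1}]}$ says precisely that $\overline\zeta$ is a normal basis generator of $\mathbb F_{p^d}/\mathbb F_p$, and a generator of the group of roots of unity of $k$ (i.e.\ the Teichm\"uller lift of a generator of $\mathbb F_{p^d}^{\times}$) need not have this property once $d\geq 3$. Concretely, take $p=3$, $d=3$ and let $\alpha\in\mathbb F_{27}$ be a root of $x^3-x+1$: then $\alpha^3=\alpha-1$ and $\alpha^9=\alpha+1$, so $\alpha+\alpha^3+\alpha^9=0$ and $\alpha$ is not normal, while $\alpha^{13}=\alpha^9\cdot\alpha^3\cdot\alpha=(\alpha^2-1)\alpha=-1$ shows that $\alpha$ has order $26$, i.e.\ $\alpha$ generates $\mathbb F_{27}^{\times}$. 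If $\zeta$ is the Teichm\"uller lift of this $\alpha$, then your tensor decomposition identifies $\langle\pi_n\rangle_{\mathbb Z_p[G_n]}$ with $\langle\zeta\rangle_{\mathbb Z_p[G_{-1}]}\otimes_{\mathbb Z_p}\langle\overline{\zeta_{p^{n+1}}-1}\rangle_{\mathbb Z_p[\Gal(K_n/\mathbb Q_p)]}$ inside $\mathcal O_k\otimes_{\mathbb Z_p}(\mathfrak m_{K_n}/\mathfrak m_{K_{n-1}})$, where $K_n=\mathbb Q_p(\mu_{p^{n+1}})$, and this is a proper submodule because $\langle\zeta\rangle_{\mathbb Z_p[G_{-1}]}\subsetneq\mathcal O_k$ and $\mathfrak m_{K_n}/\mathfrak m_{K_{n-1}}$ is a nonzero free $\mathbb Z_p$-module. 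So the lemma is in fact false for this admissible choice of $\zeta$. The correct repair is to constrain the choice made at the start of Section 3: one must take $\zeta\in\mu_{p^d-1}$ whose reduction is a normal element of $\mathbb F_{p^d}/\mathbb F_p$ (such $\zeta$ exists by the normal basis theorem, and can even be taken to generate $\mu_{p^d-1}$ by the Lenstra--Schoof primitive normal basis theorem); every other property of $\zeta$ used in Section 3, namely $k=\mathbb Q_p(\zeta)$, $\mathcal O_k=\mathbb Z_p[\zeta]$ and $\zeta^{\varphi}=\zeta^p$, still holds for such a $\zeta$. With that choice, (a) follows from Nakayama's lemma and both your argument and the paper's go through; without it, neither does.
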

	
	\begin{proof}
		It is enough to show that $\zeta (\zeta _{p^{n+1}}-1)$ generates
			$\mathfrak m_n/\mathfrak m_{n-1}$ as a $\mathbb Z_p[G_n]$-module.
		
		We first observe the ring of integers $\mathcal O_{k_n}$ of $k_n$.
		Let $P_m = \{ (\zeta _{p^m} -1)^{\tau} | \tau \in \Gal (k_m/k)\}$ for $m \geq 1$
			and $P_0 =\{ 1\}$.
		Since $\mathcal O_{k_n}= \mathcal O_k[\zeta _{p^{n+1}}]$,
			we have
		$$
			\mathcal O_{k_n} = \langle
				P_0 \cup P_1 \cup \cdots \cup P_{n+1}
				\rangle _{\mathcal O_k}.
		$$
		Thus, for $x \in \mathcal O_{k_n}$,
			we can write
			$x = a_0 + \sum _{m=1}^{n+1} \sum _{\tau \in \Gal (k_m/k)}
			a_{m,\tau} (\zeta _{p^m}-1)^{\tau}$
			with $a_0, a_{m,\tau} \in \mathcal O_k$.
		With this notation,
			since each $(\zeta _{p^m}-1)^{\tau}$ already has positive valuation,
			we see that $x \in \mathfrak m_n$
			if and only if $a_0 \in \mathfrak m_k$.
		
		Take any class in $\mathfrak m_n/\mathfrak m_{n-1}$
			with a representative $x \in \mathfrak m_n$.
		Write $x = a_0 + \sum _{m=1}^{n+1} \sum_{\tau \in \Gal (k_m/k)}
			a_{m,\tau}(\zeta _{p^{n+1}} -1)^{\tau}$.
		In this summation,
			the summands $a_0$ and $a_{m,\tau}(\zeta _{p^m}-1)^{\tau}$
			with $1 \leq m \leq n$
			are contained in $\mathfrak m_{n-1}$.
		Since $\mathcal O_k = \mathbb Z_p[\zeta]
			=\langle \zeta \rangle _{\mathbb Z_p[G_{-1}]}$,
			each $a_{n+1,\tau} \in \mathcal O_k$ can be written as
			$a_{n+1,\tau}=\sum _{i=0}^{d-1} b_{\tau ,i} \zeta^{\varphi ^i}$
			with $b _{\tau ,i} \in \mathbb Z_p$.
		Therefore we have
		\begin{eqnarray*}
			x & \equiv & \sum _{\tau \in \Gal (k_{n+1}/k)}
				a_{n+1,\tau} (\zeta _{p^{n+1}} - 1)^{\tau} \\
			& \equiv & \sum _{\tau \in \Gal (k_{n+1}/k)}
				\sum _{i=0}^{d-1} b_{\tau ,i} \zeta ^{\varphi ^i}
				(\zeta _{p^{n+1}} - 1)^{\tau}\quad
				(\text{mod } \mathfrak m_{n-1}).
		\end{eqnarray*}
		Here, $\zeta ^{\varphi ^i} (\zeta _{p^{n+1}}-1)^{\tau}$
			for each $i$ with $0 \leq i \leq d-1$,
			and each $\tau \in \Gal (k_{n+1}/k)$,
			is exactly a Galois conjugate of $\zeta (\zeta _{p^{n+1}}-1)$
			by $G_n= G_{-1} \times \Gal (k_{n+1}/k)$.
		This completes the proof.
	\end{proof}

	\begin{prop}\label{log isoms 2}
		For $n \geq 0$,
			we have $\log _{\widehat{E}} (\widehat{E}(\mathfrak m_n))
			\subseteq \mathfrak m_n + k_{n-1}$
			and the formal logarithm $\log _{\widehat{E}}$ induces
			canonical isomorphisms of $\mathbb Z_p[G_n]$-modules,
		\begin{eqnarray*}
			\widehat{E}(\mathfrak m _n)/\widehat{E}(\mathfrak m_{n-1})
			\overset{\simeq}{\longrightarrow} \mathfrak m_n/\mathfrak m_{n-1}.
		\end{eqnarray*}
		By these isomorphisms, $d_n$ is sent to $\pi_n$.
		In particular, we have
		\begin{eqnarray*}
			\widehat{E}(\mathfrak m_n) /\widehat{E}(\mathfrak m_{n-1})
			= \langle d_n\rangle _{\mathbb Z_p[G_n]}. 
		\end{eqnarray*}
	\end{prop}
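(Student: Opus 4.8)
The plan is to exhibit the claimed map as the one induced by the formal logarithm $\log_{\widehat E}$, to get injectivity from the $G_n$-action together with the injectivity of Proposition~\ref{E^ is torsion-free}, and to pin down the image using the explicit value of $\log_{\widehat E}(d_n)$ and Lemma~\ref{on m_n/m_{n-1}}. First I would fix the target: since $v_{k_n}$ and $v_{k_{n-1}}$ are proportional one has $\mathfrak m_n \cap k_{n-1} = \mathfrak m_{n-1}$, so the second isomorphism theorem gives a canonical $\mathbb Z_p[G_n]$-isomorphism $(\mathfrak m_n + k_{n-1})/k_{n-1} \cong \mathfrak m_n/\mathfrak m_{n-1}$. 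As $\widehat E$ is defined over $\mathbb Z_p$, the series $\log_{\widehat E}$ has coefficients in $\mathbb Q_p$, hence is $G_n$-equivariant and sends $\widehat E(\mathfrak m_{n-1})$ into $k_{n-1}$; so the composite $\widehat E(\mathfrak m_n) \xrightarrow{\log_{\widehat E}} k_n \twoheadrightarrow k_n/k_{n-1}$ is $G_n$-equivariant and induces a $\mathbb Z_p[G_n]$-map $\bar\psi\colon \widehat E(\mathfrak m_n)/\widehat E(\mathfrak m_{n-1}) \to k_n/k_{n-1}$.

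Injectivity of $\bar\psi$ I would get from a Galois argument: if $\log_{\widehat E}(P) \in k_{n-1}$ for $P \in \widehat E(\mathfrak m_n)$, then for every $\sigma \in \Gal(k_n/k_{n-1})$ we have $\log_{\widehat E}(\sigma P) = \sigma\log_{\widehat E}(P) = \log_{\widehat E}(P)$, whence $\sigma P = P$ by the injectivity of Proposition~\ref{E^ is torsion-free}; thus $P \in \widehat E(\mathfrak m_n)^{\Gal(k_n/k_{n-1})} = \widehat E(\mathfrak m_{n-1})$, so $\ker = \widehat E(\mathfrak m_{n-1})$ and $\bar\psi$ is injective. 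The surjectivity onto $\mathfrak m_n/\mathfrak m_{n-1}$ is then free from the local point: by the computation of $\log_{\widehat E}(d_n)$ in the proof of Proposition~\ref{trace condition}, every summand other than the leading $\pi_n$ lies in $k_{n-1}$ (indeed $\varepsilon_n \in \mathfrak m_k$ and each $\pi_{n-2m}$ with $m\ge 1$ lies in $k_{n-2}\subseteq k_{n-1}$), so $\bar\psi(d_n) = \pi_n \bmod k_{n-1}$; by $G_n$-equivariance the image of $\bar\psi$ contains $\langle \pi_n\rangle_{\mathbb Z_p[G_n]}$, which is all of $\mathfrak m_n/\mathfrak m_{n-1}$ by Lemma~\ref{on m_n/m_{n-1}}.

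The remaining point is the inclusion $\log_{\widehat E}(\widehat E(\mathfrak m_n)) \subseteq \mathfrak m_n + k_{n-1}$, i.e.\ that the image of $\bar\psi$ is no larger than $\mathfrak m_n/\mathfrak m_{n-1}$, and I expect this to be the main obstacle. It has genuine content because $\mathfrak m_n/\mathfrak m_{n-1}$ is not $\mathbb Z_p$-saturated in $k_n/k_{n-1}$ (for instance $\pi_n/p \notin \mathfrak m_n + k_{n-1}$ while $\pi_n \in \mathfrak m_n$), so a mere rank count cannot close the gap. I would prove it through the isomorphism $\mathscr G_n \cong \widehat E$, writing $\log_{\widehat E}(P) = \log_{\mathscr G_n}(Q) = \sum_{m\ge 0}(-1)^m g^{(2m),\varphi^{-(n+1)}}(Q)/p^m$ for the corresponding $Q \in \mathfrak m_n$, and controlling the tail. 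The key mechanism is that $g^{(2),\varphi^{-(n+1)}}(X) \equiv X^{p^2} \pmod p$ lowers the level by two modulo $p$, that is $g^{(2),\varphi^{-(n+1)}}(\mathfrak m_n) \subseteq \mathfrak m_{n-2} + p\mathcal O_{k_n}$ (the same phenomenon as $g^{(2m),\varphi^{-(n+1)}}(\pi_n) = \pi_{n-2m}$ in~(\ref{a system of uniformizers})). Feeding this into the functional equation $\log_{\mathscr G_n}(X) = X - \tfrac1p\log_{\mathscr G_n}\!\big(g^{(2),\varphi^{-(n+1)}}(X)\big)$ and inducting on $n$ (base cases $n \le 1$, where Proposition~\ref{log isoms 1} already gives $\log_{\widehat E}(\widehat E(\mathfrak m_n)) = \mathfrak m_n$) should push the lower-level, negative-valuation contributions into $k_{n-1}$ while keeping the leading term in $\mathfrak m_n$. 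The delicate part is precisely the bookkeeping of the error terms under division by $p^m$: one must check that the discrepancy between $\log_{\mathscr G_n}$ evaluated at $g^{(2),\varphi^{-(n+1)}}(Q)$ and at its level-$(n-2)$ reduction modulo $p$ is divisible by enough powers of $p$ to remain in $p(\mathfrak m_n + k_{n-1})$.

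Combining the three facts, $\bar\psi$ is a $\mathbb Z_p[G_n]$-isomorphism of $\widehat E(\mathfrak m_n)/\widehat E(\mathfrak m_{n-1})$ onto $\mathfrak m_n/\mathfrak m_{n-1}$ carrying $d_n$ to $\pi_n$. Since $\pi_n$ generates the target over $\mathbb Z_p[G_n]$ by Lemma~\ref{on m_n/m_{n-1}}, this yields the final assertion $\widehat E(\mathfrak m_n)/\widehat E(\mathfrak m_{n-1}) = \langle d_n\rangle_{\mathbb Z_p[G_n]}$.
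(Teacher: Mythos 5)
Your proposal is correct and takes essentially the same route as the paper: injectivity from Galois equivariance of $\log_{\widehat{E}}$ together with Proposition \ref{E^ is torsion-free}, surjectivity from $\log_{\widehat{E}}(d_n)\equiv \pi_n \pmod{k_{n-1}}$ combined with Lemma \ref{on m_n/m_{n-1}}, and the inclusion $\log_{\widehat{E}}(\widehat{E}(\mathfrak m_n))\subseteq \mathfrak m_n + k_{n-1}$ via the explicit Honda-type series --- the only difference being that the paper disposes of this last (and, as you say, only substantive) step by passing to $\log_{\mathscr{G}_{-1}}$ through the $\mathcal O_k$-isomorphism $\exp_{\mathscr{G}_{-1}}\circ\log_{\widehat{E}}$ and citing \cite{Kob03} Proposition 8.11 and \cite{IP06} Proposition 4.9, whereas you sketch the estimate directly. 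One slip to fix in your sketch: the functional equation carries a Frobenius shift, namely $\log_{\mathscr{G}_n}(X) = X - \tfrac{1}{p}\,\log_{\mathscr{G}_{n-2}}\bigl(g^{(2),\varphi^{-(n+1)}}(X)\bigr)$ with $\mathscr{G}_{n-2}$ (not $\mathscr{G}_n$) on the right, since $g^{(2m),\varphi^{-(n+1)}} = g^{(2m-2),\varphi^{-(n-1)}}\circ g^{(2),\varphi^{-(n+1)}}$; with that correction your level-lowering mechanism is exactly the argument of those references.
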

	
	\begin{proof}
		We prove this by the same method as the proof of Proposition 8.11 in \cite{Kob03}
			or Proposition 4.9 in \cite{IP06}.
		
		For the first statement,
			we only note that by the commutative diagram
			\[
			\xymatrix{
				\widehat{E}(\mathfrak m_n) \ar[r]^{\log _{\widehat{E}}}
				\ar[d]^{\cong}_{\exp _{\mathscr G_{-1}} \circ \log _{\widehat{E}}} & k_n\\
				\mathscr G_{-1}(\mathfrak m_n) \ar[ru]_{\log _{\mathscr G_{-1}}}&
			},
			\]
			it is enough to consider $\log _{\mathscr G_{-1}}(= \log _{\mathscr G})$
			on $\mathscr G_{-1}(\mathfrak m_n)$
			instead of $\log _{\widehat{E}}$ on $\widehat{E}(\mathfrak m_n)$.
		Then we can show that
			$\log _{\mathscr G_{-1}}(\mathscr G_{-1} (\mathfrak m_n))
			\subseteq \mathfrak m_n + k_{n-1}$
			as in \cite{Kob03}, \cite{IP06}.
		
		Since we have $\log _{\widehat{E}}(\mathfrak m_n) \cap k_{n-1}
			= \log _{\widehat{E}}(\mathfrak m_{n-1})$,
			the natural map
			$$
				\widehat{E} (\mathfrak m_n)
				/\widehat{E} (\mathfrak m_{n-1})
				\longrightarrow (\mathfrak m_n+k_{n-1})/k_{n-1}
				\cong \mathfrak m_n/ \mathfrak m_{n-1}
			$$
			is injective.
		By direct calculation, we have
			$\log _{\widehat {E}} (d_n) \equiv \pi _n$ (mod $k_{n-1}$).
		Since $\pi _n$ generates $\mathfrak m_n/\mathfrak m_{n-1}$
			as a $\mathbb Z_p[G_n]$-module (cf. Lemma \ref{on m_n/m_{n-1}}),
			the above injection is in fact a bijection.
	\end{proof}

	\begin{cor}\label{generators of E^(m_n)}
		We have
		\begin{eqnarray*}
			\widehat{E}(\mathfrak m_n)
			=\left\{
			\begin{array}{ll}
				\displaystyle
				\langle d_{-1} \rangle _{\mathbb Z_p[G_{-1}]}
					& \quad \text{if } n=-1, \\[3mm]
				\displaystyle
				\langle d_n, d_{n-1} \rangle _{\mathbb Z_p[G_n]}
					& \quad \text{if } n \geq 0.
			\end{array}\right.
		\end{eqnarray*}
	\end{cor}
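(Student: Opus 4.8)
The plan is to argue by induction on $n$, feeding the successive-quotient description of Proposition \ref{log isoms 2} into the natural filtration of $\widehat E(\mathfrak m_n)$ and then using the trace relation of Proposition \ref{trace condition} to trim the resulting generating set down to two elements.

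First I would settle the base case $n=-1$. Since $\pi_{-1}=0$, Definition \ref{a system of local points following B.D. Kim and M. Kim} gives $\log_{\widehat E}(d_{-1})=\varepsilon_{-1}$, and the explicit series for $\varepsilon_{-1}$ shows $\varepsilon_{-1}=pu$ with $u\in\mathcal O_k^{\times}$ and $u\equiv \zeta^{\varphi^{-2}}\ (\mathrm{mod}\ p\mathcal O_k)$. By Proposition \ref{log isoms 1} applied with $K=k$ and $j=1$ (legitimate since $v_k(p)=1>1/(p^2-1)$ for $p\geq 3$), the logarithm $\log_{\widehat E}$ identifies $\widehat E(\mathfrak m_{-1})$ with $\mathfrak m_k=p\mathcal O_k$ as $\mathbb Z_p[G_{-1}]$-modules. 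It then suffices to check that $\varepsilon_{-1}$ generates $\mathfrak m_k$ over $\mathbb Z_p[G_{-1}]$: reducing modulo $p\mathfrak m_k$ and using $\mathcal O_k=\langle \zeta\rangle_{\mathbb Z_p[G_{-1}]}$ (so that the conjugate $\zeta^{\varphi^{-2}}$ is again a generator, $\varphi^{-2}$ being a unit of $\mathbb Z_p[G_{-1}]$), Nakayama's lemma concludes the base case.

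For the inductive step I would use Proposition \ref{log isoms 2} in the short exact sequence
\[
0\longrightarrow \widehat E(\mathfrak m_{n-1})\longrightarrow \widehat E(\mathfrak m_n)\longrightarrow \widehat E(\mathfrak m_n)/\widehat E(\mathfrak m_{n-1})\longrightarrow 0,
\]
whose cokernel is $\langle d_n\rangle_{\mathbb Z_p[G_n]}$, so that $\widehat E(\mathfrak m_n)=\widehat E(\mathfrak m_{n-1})+\langle d_n\rangle_{\mathbb Z_p[G_n]}$. Because elements of $\widehat E(\mathfrak m_{n-1})$ lie in $k_{n-1}$, the $G_n$-action on them factors through $G_{n-1}$, so the induction hypothesis rewrites the right-hand side as $\langle d_n,d_{n-1},d_{n-2}\rangle_{\mathbb Z_p[G_n]}$ (with $d_{-2}$ simply absent, giving $\langle d_0,d_{-1}\rangle$ when $n=0$, which is already the desired form).

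The decisive point is to discard the superfluous generator $d_{n-2}$ when $n\geq 1$: Proposition \ref{trace condition}(1) gives $d_{n-2}=-\Tr_{n/n-1}(d_n)$, and $\Tr_{n/n-1}=\sum_{\sigma\in\Gal(k_n/k_{n-1})}\sigma\in\mathbb Z_p[G_n]$, whence $d_{n-2}\in\langle d_n\rangle_{\mathbb Z_p[G_n]}$ and may be dropped, leaving exactly $\langle d_n,d_{n-1}\rangle_{\mathbb Z_p[G_n]}$. I expect the main difficulty here to be bookkeeping rather than conceptual: one must track the change of group rings $\mathbb Z_p[G_{n-1}]\to\mathbb Z_p[G_n]$ carefully when invoking the induction hypothesis, and recognize that relation (1) is precisely what collapses the naive three-generator module to two. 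Relation (2) of Proposition \ref{trace condition}, carrying the factor $\varphi+\varphi^{-1}$, plays no role in this corollary but, as flagged in Remark \ref{varphi + varphi ^{-1} always appears} and Lemma \ref{on varphi + varphi^{-1} and d}, is the genuine source of the later obstruction when the cyclicity of the norm subgroups is analyzed.
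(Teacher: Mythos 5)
Your proof is correct and follows essentially the same route as the paper's own (two-sentence) proof: the case $n=-1$ via the logarithm isomorphism of Proposition \ref{log isoms 1} together with Nakayama's lemma, and the case $n\geq 0$ by combining Proposition \ref{log isoms 2} with the trace relations of Proposition \ref{trace condition}. Your write-up merely supplies the details the paper leaves implicit (the unit computation showing $\varepsilon_{-1}=pu$ generates $\mathfrak m_k$, the change of group rings $\mathbb Z_p[G_{n-1}]\to\mathbb Z_p[G_n]$ in the induction, and the use of relation (1) to discard $d_{n-2}$).
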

	
	\begin{proof}
		The case $n=-1$ follows from $\widehat{E} (\mathfrak m_{-1}) \cong \mathfrak m_{-1}$
			(see Proposition \ref{log isoms 1}) and Nakayama's lemma.
		The case $n \geq 0$ follows easily from Proposition \ref{log isoms 2}
			and the trace relations satisfied by the $d_n$
			(see Proposition \ref{trace condition}).
	\end{proof}
	
	\begin{rem}\label{requisitions on a system of local points}
		\rm{We defined the system of local points $(d_n)_n$
			following B.D. Kim \cite{Kim07} and M. Kim \cite{MKim11} in the above.
		We can take another system of local points instead of $(d_n)_n$.
		Indeed, what we need for the following discussion
			is a system of local points $(d_n)_n$
			which satisfies the following three conditions
		\begin{enumerate}
			\item $\Tr _{n/n-1} (d_n) = -d_{n-2}$
				for each $n \geq 1$ (Proposition \ref{trace condition} (1)),
			\item $\Tr _{0/-1}(d_0) = - (\varphi + \varphi ^{-1}) d_{-1}$
				(Proposition \ref{trace condition} (2)),
			\item $\widehat{E}(\mathfrak m_n)/\widehat{E}(\mathfrak m_{n-1})
				= \langle d_n \rangle _{\mathbb Z_p[G_n]}$
				(Proposition \ref{log isoms 2}).
		\end{enumerate}
		Such a system $(d_n)_n$ obviously admits at least a difference
			of multiplication by a unit in $\mathbb Z_p[G_{-1}]^{\times}$.
		S. Kobayashi constructed such a system of local points also
			in \cite{Kob13} Proof of Proposition 3.12
			by using another formal power series
			$\ell _{\epsilon}(X)$
			and a system $(\zeta _{p^{n+1}}-1)_n$
			instead of $\log _{\mathscr G} (X)$
			and a system $(\epsilon _n [+]_{\mathscr G_n} \pi _n)_n$.
		In our setting,
			the formal power series $\ell _{\epsilon}(X)$ is defined
			for each $\epsilon \in \widehat{E}(\mathfrak m_k)$ by
			\begin{eqnarray*}
				\ell _{\epsilon}(X)
				= \varepsilon + \sum _{m=0}^{\infty} (-1)^m
				\frac{f^{(2m)}(\varepsilon ' X)}{p^m} \in k[[X]]
			\end{eqnarray*}
			where $\varepsilon = \log _{\widehat{E}}(\epsilon) \in \mathfrak m_k$,
			$\varepsilon ' =(\varphi ^2 + p) \varepsilon p^{-1} \in \mathcal O_k$,
			$f(X) = (X+1)^p-1$ and
			$f^{(m)}(X)$ is the $m$-iterated composition of $f$.
		By using this formal power series,
			Kobayashi defined $d_{\epsilon ,n}$ for each $n \geq -1$ by
			\begin{eqnarray*}
				d_{\epsilon ,n}
				= \exp _{\widehat{E}} \circ \ell _{\epsilon}^{\varphi ^{-(n+1)}} (\zeta _{p^{n+1}}-1)
				\in \widehat{E} (\mathfrak m_n).
			\end{eqnarray*}
		Then the first and the second conditions, which are listed above, are satisfied.
		If we take $\varepsilon \in \mathfrak m_k$
			such that $\mathfrak m_k = \langle \varepsilon \rangle _{\mathbb Z_p[G_n]}$,
			then the third condition is also satisfied.
		We also note that we can take such $\varepsilon \in \mathfrak m_k$,
			since $\mathfrak m_k$ is known to be a cyclic $\mathbb Z_p[G_n]$-module.
		}
	\end{rem}

\subsection{The norm subgroups}

	Following S. Kobayashi \cite{Kob03} (and M. Kim \cite{MKim11}),
			we define the $n$-th plus subgroup $\widehat{E}^+(\mathfrak m_n)$,
			the $n$-th minus subgroup $\widehat{E}^-(\mathfrak m_n)$
			and the $n$-th norm subgroup $\mathscr C(\mathfrak m_n)$
			of $\widehat{E}(\mathfrak m_n)$;
			
	\begin{defn}\label{norm subgroups}
		We define
		\begin{eqnarray*}
			&&\widehat{E}^+(\mathfrak m_n)
			= \{ P \in \widehat{E}(\mathfrak m_n)
				|\Tr_{n/m+1} P \in \widehat{E}(\mathfrak m_m)\ \text{\rm for all even }
				m, -1 \leq m \leq n-1\},\\
			&&\widehat{E}^-(\mathfrak m_n)
			= \{ P \in \widehat{E}(\mathfrak m_n)
				|\Tr_{n/m+1} P \in \widehat{E}(\mathfrak m_m)\ \text{\rm for all odd }
				m, -1 \leq m \leq n-1\},
		\end{eqnarray*}
		for $n \geq 0$.
		We denote $\widehat{E}^{\pm}(\mathfrak m_{\infty})
		= \bigcup _n \widehat{E}^{\pm}(\mathfrak m_n)$.
		We also define
		\begin{eqnarray*}
		\mathscr C(\mathfrak m_n)
			= \{ P \in \widehat{E}(\mathfrak m_n)
				|\Tr_{n/m+1} P \in \widehat{E}(\mathfrak m_m)\ \text{\rm for all }
				m \equiv n\ (\text{\rm mod } 2), -1 \leq m \leq n-1\}
		\end{eqnarray*}
		for $n \geq 0$ and $\mathscr C(\mathfrak m_{-1})=\widehat{E}(\mathfrak m_{-1})$.
	\end{defn}
	
	By the following lemma,
		it is enough to study $\widehat{E}^{\pm}(\mathfrak m_n)$
		instead of $E^{\pm}(k_n)$ for our purpose.
		
	\begin{lemma}\label{comparison between Epm and E^pm}
		We can show that the natural maps
			$\widehat{E}^{\pm} (\mathfrak m_n) \rightarrow E^{\pm}(k_n)$
			induce isomorphisms
			$\widehat{E}^{\pm} (\mathfrak m_n) \otimes \mathbb Q_p /\mathbb Z_p
			\overset{\simeq}{\rightarrow} E^{\pm} (k_n) \otimes \mathbb Q_p/\mathbb Z_p$
			for all $n$, and thus we have
		\begin{eqnarray*}
			\widehat{E}^{\pm} (\mathfrak m_{\infty}) \otimes \mathbb Q_p /\mathbb Z_p
			\overset{\simeq}{\longrightarrow}
			E^{\pm} (k_{\infty}) \otimes \mathbb Q_p/\mathbb Z_p.
		\end{eqnarray*}
	\end{lemma}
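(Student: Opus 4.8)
The plan is to reduce the comparison of the $\pm$-subgroups to the comparison of the full groups $\widehat{E}(\mathfrak m_n)$ and $E(k_n)$, and then to show that tensoring with $\mathbb Q_p/\mathbb Z_p$ kills the discrepancy between the formal group and the global points. First I would recall the standard exact sequence relating the formal group to the group of local points: for the good-reduction elliptic curve $E/\mathbb Q_p$ we have a filtration
\begin{eqnarray*}
	0 \longrightarrow \widehat{E}(\mathfrak m_n) \longrightarrow E(k_n) \longrightarrow \widetilde{E}_n(\mathbb F) \longrightarrow 0,
\end{eqnarray*}
where $\widetilde{E}_n$ is the reduction and $\mathbb F$ the residue field. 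The key point is that the reduction $\widetilde{E}_n(\mathbb F)$ is a finite group (the residue field is finite since $k_n/\mathbb Q_p$ is a finite extension), hence its $p$-primary torsion is controlled, and more importantly, since $E$ has supersingular reduction the prime-to-$p$ part of the index behaves well. I would observe that $\widehat{E}(\mathfrak m_n)$ has finite index in $E(k_n)$, so the inclusion $\widehat{E}(\mathfrak m_n) \hookrightarrow E(k_n)$ becomes an isomorphism after $\otimes\, \mathbb Q_p/\mathbb Z_p$: a finite quotient is annihilated, and by Proposition \ref{E^ is torsion-free} the formal group is $\mathbb Z_p$-torsion-free, which rules out spurious torsion contributions to the $\mathrm{Tor}$ term.

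The main step is then to match the $\pm$-subgroups under this isomorphism. Since $E^{\pm}(k_n)$ and $\widehat{E}^{\pm}(\mathfrak m_n)$ are defined by \emph{identical} trace conditions (compare Definition \ref{plus and minus subgroups} with Definition \ref{norm subgroups}), and the trace maps $\Tr_{n/m+1}$ on $E(k_n)$ restrict to the norm maps on $\widehat{E}(\mathfrak m_n)$ compatibly with the inclusion, the subgroup $\widehat{E}^{\pm}(\mathfrak m_n)$ is precisely $\widehat{E}(\mathfrak m_n) \cap E^{\pm}(k_n)$. I would therefore argue that the natural map $\widehat{E}^{\pm}(\mathfrak m_n) \to E^{\pm}(k_n)$ has finite kernel and cokernel, both being subquotients of the finite group $E(k_n)/\widehat{E}(\mathfrak m_n)$. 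Tensoring the short exact sequence $0 \to \widehat{E}^{\pm}(\mathfrak m_n) \to E^{\pm}(k_n) \to Q_n \to 0$ (with $Q_n$ finite) against the divisible group $\mathbb Q_p/\mathbb Z_p$ and using the right-exactness together with the vanishing $\mathrm{Tor}_1^{\mathbb Z}(Q_n, \mathbb Q_p/\mathbb Z_p) = Q_n[p^\infty]$ being absorbed by the torsion-freeness of the formal group, one gets the desired isomorphism on $\otimes\, \mathbb Q_p/\mathbb Z_p$ for each $n$. Passing to the direct limit over $n$ is then formal, since $\varinjlim$ commutes with $\otimes \mathbb Q_p/\mathbb Z_p$ and with the formation of $\widehat{E}^{\pm}(\mathfrak m_\infty) = \bigcup_n \widehat{E}^{\pm}(\mathfrak m_n)$ and $E^{\pm}(k_\infty)$.

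The hard part will be verifying carefully that the trace/norm conditions defining the $\pm$-subgroups are genuinely compatible with the inclusion $\widehat{E}(\mathfrak m_n) \hookrightarrow E(k_n)$ at the level of the \emph{integral} lattices, rather than only after tensoring; that is, one must check that an element of $E(k_n)$ satisfying the plus (resp. minus) trace conditions differs from an element of $\widehat{E}(\mathfrak m_n)$ by a bounded torsion amount that itself satisfies the conditions, so that the cokernel $Q_n$ really is finite and its size is uniformly bounded. I expect this to follow from the finiteness of the reduction and the fact that the supersingular condition $a_w = 0$ forces $\widetilde{E}_n(\mathbb F)$ to have no $p$-torsion growing too fast, but making the bound uniform in $n$ (needed for the clean limit statement) is the delicate point. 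The short remark in the paper that "we can show this by the same method" presumably refers exactly to Kobayashi's argument in \cite{Kob03}, where this uniform control is the technical heart.
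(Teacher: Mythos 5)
Your skeleton matches the paper's proof: both use the reduction sequence $0 \to \widehat{E}(\mathfrak m_n) \to E(k_n) \to \widetilde{E}(\mathbb F_k) \to 0$, the identity $\widehat{E}^{\pm}(\mathfrak m_n) = \widehat{E}(\mathfrak m_n) \cap E^{\pm}(k_n)$ to see that the cokernel $Q_n$ of $\widehat{E}^{\pm}(\mathfrak m_n) \hookrightarrow E^{\pm}(k_n)$ embeds into the finite group $\widetilde{E}(\mathbb F_k)$, and then tensor with $\mathbb Q_p/\mathbb Z_p$. But your handling of the Tor term is wrong, and this is exactly the point where the argument needs the supersingular hypothesis. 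You claim that $\mathrm{Tor}_1^{\mathbb Z}(Q_n,\mathbb Q_p/\mathbb Z_p)=Q_n[p^\infty]$ is ``absorbed by the torsion-freeness of the formal group.'' Torsion-freeness does the opposite: since $E^{\pm}(k_n)$ has no $p$-torsion, the connecting map $Q_n[p^\infty] \to \widehat{E}^{\pm}(\mathfrak m_n) \otimes \mathbb Q_p/\mathbb Z_p$ is \emph{injective}, so the kernel of $\widehat{E}^{\pm}(\mathfrak m_n) \otimes \mathbb Q_p/\mathbb Z_p \to E^{\pm}(k_n) \otimes \mathbb Q_p/\mathbb Z_p$ is exactly $Q_n[p^\infty]$. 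A minimal counterexample to your mechanism: in $0 \to p\mathbb Z_p \to \mathbb Z_p \to \mathbb Z/p \to 0$ both the submodule and the ambient module are torsion-free and the quotient is finite, yet tensoring with $\mathbb Q_p/\mathbb Z_p$ yields the multiplication-by-$p$ map on $\mathbb Q_p/\mathbb Z_p$, whose kernel is $\mathbb Z/p$. So injectivity fails whenever the finite cokernel has $p$-torsion; what you must actually prove is $Q_n[p^\infty]=0$.

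That vanishing is where supersingularity enters, and it is the step the paper does cleanly: $Q_n \hookrightarrow \widetilde{E}(\mathbb F_k)$, and $\widetilde{E}(\mathbb F_k)$ has no $p$-torsion whatsoever because the reduction is supersingular ($\widetilde{E}[p](\overline{\mathbb F}_p)=0$), hence $Q_n[p^\infty]=0$ and also $Q_n \otimes \mathbb Q_p/\mathbb Z_p=0$, giving the isomorphism at each level. Your phrasings ``the prime-to-$p$ part of the index behaves well'' and ``no $p$-torsion growing too fast'' show this fact is not in place in your argument: the index is entirely prime to $p$, full stop. Relatedly, your final paragraph manufactures a difficulty that does not exist. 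No uniformity in $n$ is required: the residue field of $k_n$ equals $\mathbb F_k$ for every $n$ (the extensions $k_n/k$ are totally ramified), so all the $Q_n$ embed in the single fixed finite group $\widetilde{E}(\mathbb F_k)$; and in any case the limit statement is purely formal, since $\varinjlim$ is exact and commutes with $\otimes\, \mathbb Q_p/\mathbb Z_p$, so level-by-level isomorphisms pass to $\widehat{E}^{\pm}(\mathfrak m_\infty) \otimes \mathbb Q_p/\mathbb Z_p \to E^{\pm}(k_\infty) \otimes \mathbb Q_p/\mathbb Z_p$ with no bounds needed.
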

	
	\begin{proof}
		Indeed, we consider the following commutative diagrams
		\[\xymatrix{
			0 \ar[r]
				& \widehat{E}(\mathfrak m_n) \ar[r]
				& E(k_n) \ar[r]
				& \widetilde{E}(\mathbb F_k) \ar[r]
				& 0\\
			0 \ar[r]
				& \widehat{E}^{\pm}(\mathfrak m_n) \ar[r] \ar@{^{(}->}[u]
				& E^{\pm}(k_n) \ar[r] \ar@{^{(}->}[u]
				& A_n^{\pm} \ar[r] \ar[u]^{\iota _n ^{\pm}}
				& 0
		}\]
		where $\widetilde{E}$ is the reduction of $E$ modulo $p$,
			$\mathbb F_k$ is the residue field of $k$ and
			$A_n^{\pm}$ is the cokernel of
			$\widehat{E}^{\pm}(\mathfrak m_n) \rightarrow E^{\pm}(k_n)$.
		Since $\widehat{E}^{\pm}(\mathfrak m_n)
			= \widehat{E}(\mathfrak m_n) \cap E^{\pm}(k_n)$,
			we see that the right vertical arrows $\iota _n^{\pm}$ are injective.
		Thus $A_n^{\pm}$ are finite as $\widetilde{E}(\mathbb F_k)$ is finite.
		We also note that $A_n^{\pm}[p^{\infty}]=0$,
			since $E/\mathbb Q_p$ has supersingular reduction.
		From the above, our claim will follow immediately.
	\end{proof}
	
	By comparing two definitions, we get the following relations
		between the plus subgroups (the minus subgroups) and the norm subgroups.
	\begin{lemma}\label{comparison between norm subgroups}
		We have
		\begin{eqnarray*}
			\widehat{E}^+(\mathfrak m_n) =
			\left\{
			\begin{array}{ll}
				\displaystyle
				\mathscr C(\mathfrak m_n) & \text{if } n \text{ is even},\\[5mm]
				\displaystyle
				\mathscr C(\mathfrak m_{n-1}) & \text{if } n \text{ is odd},
			\end{array}
			\right.
		\end{eqnarray*}
		\begin{eqnarray*}
			\widehat{E}^-(\mathfrak m_n) =
			\left\{
			\begin{array}{ll}
				\displaystyle
				\mathscr C(\mathfrak m_n) & \text{if } n \text{ is odd},\\[5mm]
				\displaystyle
				\mathscr C(\mathfrak m_{n-1}) & \text{if } n \text{ is even}.
			\end{array}
			\right.
		\end{eqnarray*}
	\end{lemma}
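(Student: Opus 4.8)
The plan is to compare the two systems of defining conditions index by index, reducing the nontrivial identifications to a one-layer descent of the trace conditions. Two of the four equalities are immediate from the definitions. When $n$ is even, the set of indices $m$ occurring in the definition of $\widehat{E}^+(\mathfrak m_n)$ is $\{m \text{ even} : -1 \le m \le n-1\}$, which is literally the set $\{m \equiv n\ (\mathrm{mod}\ 2) : -1 \le m \le n-1\}$ defining $\mathscr C(\mathfrak m_n)$; hence $\widehat{E}^+(\mathfrak m_n)=\mathscr C(\mathfrak m_n)$ by inspection, and the identity $\widehat{E}^-(\mathfrak m_n)=\mathscr C(\mathfrak m_n)$ for $n$ odd is the same observation with the opposite parity.

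The content lies in the mixed-parity cases; I treat $\widehat{E}^+(\mathfrak m_n)=\mathscr C(\mathfrak m_{n-1})$ for $n$ odd, the case of $\widehat{E}^-$ for $n$ even being symmetric. First I would isolate the largest admissible index $m=n-1$, which is even: since $\Tr_{n/n}$ is the identity, the corresponding defining condition reads $P \in \widehat{E}(\mathfrak m_{n-1})$. Thus every $P \in \widehat{E}^+(\mathfrak m_n)$ automatically lies in $\widehat{E}(\mathfrak m_{n-1})$, and it remains to match the surviving conditions, indexed by even $m$ with $0 \le m \le n-3$, against those defining $\mathscr C(\mathfrak m_{n-1})$; as $n-1$ is even, the latter index set is exactly the same collection of even $m$ with $0 \le m \le n-3$, so the two descriptions become comparable once the traces are related.

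The key step is therefore a descent lemma: for $P \in \widehat{E}(\mathfrak m_{n-1})$ and such $m$, the condition $\Tr_{n/m+1}P \in \widehat{E}(\mathfrak m_m)$ is equivalent to $\Tr_{n-1/m+1}P \in \widehat{E}(\mathfrak m_m)$. Since traces compose, $\Tr_{n/m+1}P=\Tr_{n-1/m+1}(\Tr_{n/n-1}P)$, and because $P$ is fixed by $\Gal(k_n/k_{n-1})$, a group of order $p$, one has $\Tr_{n/n-1}P=[p]P$; hence $\Tr_{n/m+1}P=[p]\,\Tr_{n-1/m+1}P$, where $Q:=\Tr_{n-1/m+1}P \in \widehat{E}(\mathfrak m_{m+1})$. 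The equivalence then reduces to injectivity of multiplication by $p$ on $\widehat{E}(\mathfrak m_{m+1})/\widehat{E}(\mathfrak m_m)$. I expect this last point to be the main obstacle, but it is resolved by Proposition \ref{log isoms 2}, which identifies this quotient with $\mathfrak m_{m+1}/\mathfrak m_m$; the latter is $\mathbb Z_p$-torsion-free, since $px \in \mathfrak m_m \subseteq k_m$ forces $x \in k_m$ and therefore $x \in k_m \cap \mathfrak m_{m+1}=\mathfrak m_m$. Combining the descent lemma with the reduction of the previous paragraph yields $\widehat{E}^+(\mathfrak m_n)=\mathscr C(\mathfrak m_{n-1})$ for $n$ odd, and running the identical argument with the parities exchanged gives $\widehat{E}^-(\mathfrak m_n)=\mathscr C(\mathfrak m_{n-1})$ for $n$ even.
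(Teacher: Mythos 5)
Your proof is correct. The paper states this lemma without proof (``by comparing two definitions''), and your argument supplies exactly the details that comparison needs: the same-parity cases are indeed literal coincidences of index sets, and the mixed-parity cases require precisely your descent step, namely that for $P \in \widehat{E}(\mathfrak m_{n-1})$ one has $\Tr_{n/m+1}P=[p]\Tr_{n-1/m+1}P$ together with injectivity of multiplication by $p$ on $\widehat{E}(\mathfrak m_{m+1})/\widehat{E}(\mathfrak m_m)$. Your route to that injectivity via Proposition \ref{log isoms 2} is valid (and non-circular, as that proposition precedes the norm subgroups); an equally short alternative, and the one the paper itself uses in the analogous step of the proof of Proposition \ref{generators of norm subgroups}, is to note that $[p](Q^{\sigma}-Q)=0$ for $\sigma \in \Gal(k_{m+1}/k_m)$ and invoke the torsion-freeness of Proposition \ref{E^ is torsion-free}.
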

	
	We now describe $\mathscr C(\mathfrak m_n)$ in terms of the system of local points $(d_n)_n$,
		and thus we get a description of plus and minus subgroups
		$\widehat{E}^{\pm}(\mathfrak m_n)$.
	
	\begin{prop}\label{generators of norm subgroups}
		(1) For each $n \geq -1$, the $n$-th norm subgroup is generated by
		$d_n$ and $d_{-1}$ as a $\mathbb Z_p[G_n]$-module;
		\begin{eqnarray*}\label{generators of C(m_n)}
			\mathscr C(\mathfrak m_n) = \langle d_n, d_{-1}\rangle _{\mathbb Z_p[G_n]}.
		\end{eqnarray*}
		
		(2) For each $n \geq 0$, we have an exact sequence
		\begin{eqnarray}\label{exact sequence in norm subgroups}
			0 \longrightarrow \widehat{E} (\mathfrak m_{-1})
			\longrightarrow \mathscr C(\mathfrak m_n) \oplus \mathscr C(\mathfrak m_{n-1})
			\longrightarrow \widehat{E} (\mathfrak m_n)
			\longrightarrow 0,
		\end{eqnarray}
		where the first map is diagonal embedding by inclusions,
		and the second map is $(a,b) \mapsto a-b$.
	\end{prop}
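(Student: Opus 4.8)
The plan is to prove (1) by induction on $n$, with the step descending from level $n$ to level $n-2$, and then to deduce (2) formally from (1) and Corollary \ref{generators of E^(m_n)}. The two base cases start both parities: for $n=-1$ we have $\mathscr C(\mathfrak m_{-1})=\widehat E(\mathfrak m_{-1})=\langle d_{-1}\rangle_{\mathbb Z_p[G_{-1}]}$ by Corollary \ref{generators of E^(m_n)}, and for $n=0$ the defining trace conditions are vacuous (there is no even $m$ with $-1\le m\le -1$), so $\mathscr C(\mathfrak m_0)=\widehat E(\mathfrak m_0)=\langle d_0,d_{-1}\rangle_{\mathbb Z_p[G_0]}$ again by Corollary \ref{generators of E^(m_n)}.

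The crucial input, and the step I expect to be the main obstacle, is the intersection identity
\[
\mathscr C(\mathfrak m_n)\cap\widehat E(\mathfrak m_{n-1})=\mathscr C(\mathfrak m_{n-2})\qquad(n\geq 1).
\]
First I would record that $\mathscr C(\mathfrak m_n)$ is a $\mathbb Z_p[G_n]$-submodule of $\widehat E(\mathfrak m_n)$: each $k_m$ is Galois over $\mathbb Q_p$, so the trace maps are $G_n$-equivariant and the $\widehat E(\mathfrak m_m)$ are $G_n$-stable. For $Q\in\widehat E(\mathfrak m_{n-1})$ one has $\Tr_{n/n-1}(Q)=pQ$, whence $\Tr_{n/m+1}(Q)=p\,\Tr_{n-1/m+1}(Q)$ for $m+1\le n-1$; thus the conditions cutting out $\mathscr C(\mathfrak m_n)$, restricted to $\widehat E(\mathfrak m_{n-1})$, become the trace conditions at the indices $m\not\equiv n-1\pmod 2$, i.e. precisely those of the opposite-sign norm subgroup of level $n-1$, which is $\mathscr C(\mathfrak m_{n-2})$ by Lemma \ref{comparison between norm subgroups}. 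The subtle point is the spurious factor $p$: to pass from $p\,\Tr_{n-1/m+1}(Q)\in\widehat E(\mathfrak m_m)$ back to $\Tr_{n-1/m+1}(Q)\in\widehat E(\mathfrak m_m)$, I would invoke that $\widehat E(\mathfrak m_{m+1})/\widehat E(\mathfrak m_m)\cong\mathfrak m_{m+1}/\mathfrak m_m$ (Proposition \ref{log isoms 2}) is $\mathbb Z_p$-torsion free, which in turn holds because $k_m\cap\mathfrak m_{m+1}=\mathfrak m_m$.

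Granting this identity, the inductive step for (1) is as follows. For $\supseteq$, the element $d_{-1}$ lies in $\mathscr C(\mathfrak m_n)$ since $\Tr_{n/m+1}(d_{-1})\in\widehat E(\mathfrak m_{-1})$, while $d_n\in\mathscr C(\mathfrak m_n)$ follows from $\Tr_{n/n-1}(d_n)=-d_{n-2}$ (Proposition \ref{trace condition}) together with the inductive hypothesis $d_{n-2}\in\mathscr C(\mathfrak m_{n-2})$. For $\subseteq$, take $P\in\mathscr C(\mathfrak m_n)$; by Proposition \ref{log isoms 2} its image in $\widehat E(\mathfrak m_n)/\widehat E(\mathfrak m_{n-1})=\langle d_n\rangle$ equals $\alpha d_n$ for some $\alpha\in\mathbb Z_p[G_n]$, so $Q:=P-\alpha d_n\in\widehat E(\mathfrak m_{n-1})$. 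Since $\alpha d_n\in\mathscr C(\mathfrak m_n)$, the identity and the inductive hypothesis give $Q\in\mathscr C(\mathfrak m_n)\cap\widehat E(\mathfrak m_{n-1})=\mathscr C(\mathfrak m_{n-2})=\langle d_{n-2},d_{-1}\rangle$. As $d_{n-2}=-\Tr_{n/n-1}(d_n)\in\langle d_n\rangle_{\mathbb Z_p[G_n]}$, we conclude $P\in\langle d_n,d_{-1}\rangle_{\mathbb Z_p[G_n]}$, finishing (1).

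For (2), injectivity of the diagonal map is immediate, and exactness in the middle reduces to $\mathscr C(\mathfrak m_n)\cap\mathscr C(\mathfrak m_{n-1})=\widehat E(\mathfrak m_{-1})$, which I would obtain by iterating the intersection identity downward, $\mathscr C(\mathfrak m_n)\cap\mathscr C(\mathfrak m_{n-1})=\mathscr C(\mathfrak m_{n-1})\cap\mathscr C(\mathfrak m_{n-2})=\cdots=\mathscr C(\mathfrak m_0)\cap\mathscr C(\mathfrak m_{-1})=\widehat E(\mathfrak m_{-1})$. Surjectivity of $(a,b)\mapsto a-b$ is then formal from (1) and Corollary \ref{generators of E^(m_n)}: writing $P=\alpha d_n+\beta d_{n-1}\in\widehat E(\mathfrak m_n)$, the term $\alpha d_n$ lies in $\mathscr C(\mathfrak m_n)$, and $-\beta d_{n-1}$ (which depends only on the image of $\beta$ in $\mathbb Z_p[G_{n-1}]$, since $d_{n-1}$ is fixed by $\Gal(k_n/k_{n-1})$) lies in $\mathscr C(\mathfrak m_{n-1})$, so $P=\alpha d_n-(-\beta d_{n-1})$ is a difference as required.
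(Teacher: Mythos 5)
Your proposal is correct, but it is organized around a different key lemma than the paper's proof, so the two are worth comparing. The paper first proves the coarser intersection statement $\mathscr C(\mathfrak m_n)\cap\mathscr C(\mathfrak m_{n-1})=\widehat E(\mathfrak m_{-1})$ directly (if an element of the intersection lies in $\widehat E(\mathfrak m_m)$ with $m\geq 0$, a trace computation shows a $p$-power multiple of it lies in $\widehat E(\mathfrak m_{m-1})$, and torsion-freeness, Proposition \ref{E^ is torsion-free}, lets one descend), and then runs a \emph{simultaneous} induction proving the two statements $\mathscr C(\mathfrak m_n)=\langle d_n,d_{-1}\rangle_{\mathbb Z_p[G_n]}$ and $\mathscr C(\mathfrak m_n)+\mathscr C(\mathfrak m_{n-1})=\widehat E(\mathfrak m_n)$ together, extracting the generation statement from the sum property plus the intersection lemma. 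You instead isolate the finer identity $\mathscr C(\mathfrak m_n)\cap\widehat E(\mathfrak m_{n-1})=\mathscr C(\mathfrak m_{n-2})$, proved by restricting the defining trace conditions to $\widehat E(\mathfrak m_{n-1})$, stripping the spurious factor $p$ via torsion-freeness of $\widehat E(\mathfrak m_{m+1})/\widehat E(\mathfrak m_m)$, and recognizing the result through Lemma \ref{comparison between norm subgroups}; this decouples the argument, so that (1) follows from a clean step-two induction with the element-wise descent $P\mapsto P-\alpha d_n$, and (2) becomes formal (the middle exactness by iterating your identity down to level $0$, surjectivity from (1) and Corollary \ref{generators of E^(m_n)}). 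What your route buys is a sharper structural fact and an induction that carries only statement (1); what it costs is reliance on Lemma \ref{comparison between norm subgroups}, which the paper asserts ``by comparing two definitions'' but which secretly requires the very same $p$-divisibility manipulation you make explicit -- so your write-up in effect supplies that hidden verification, while the paper's self-contained simultaneous induction avoids invoking that lemma at all. Both arguments ultimately rest on the same two pillars: torsion-freeness of the formal group points and the generator descriptions of Proposition \ref{log isoms 2} and Corollary \ref{generators of E^(m_n)}.
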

	
	\begin{proof}
		We will prove this by the same method as the proof of Proposition 8.12 in \cite{Kob03}.
		The main difference is the element $d_{-1}$ in the first statement.
		
		We first show that $\mathscr C (\mathfrak m_n) \cap \mathscr C (\mathfrak m_{n-1})
			= \widehat{E} (\mathfrak m_{-1})$ for $n \geq 0$.
		It is clear that $\mathscr C(\mathfrak m_n)
			\cap \mathscr C (\mathfrak m_{n-1}) \supseteq \widehat{E}(\mathfrak m_{-1})$
			by definition.
		Let $P \in \mathscr C (\mathfrak m_n) \cap \mathscr C (\mathfrak m_{n-1})$.
		We show that if $P \in \widehat{E}(\mathfrak m_m)$ for some $m \geq 0$
			then $P \in \widehat{E}(\mathfrak m_{m-1})$.
		In the case $m \equiv n$ (mod $2$),
			we have $p^{n-m}P = \Tr _{n/m}P \in \widehat{E}(\mathfrak m_{m-1})$
			by the definition of $\mathscr C(\mathfrak m_{n-1})$.
		Therefore for $\sigma \in \Gal (k_m/k_{m-1})$,
			we have $p^{n-m}(P^{\sigma}-P)=0$.
		Hence, by Proposition \ref{E^ is torsion-free},
			$P \in \widehat{E}(\mathfrak m_{m-1})$.
		In the case $m \equiv n-1$ (mod $2$),
			our claim is shown similarly by considering $\mathscr C(\mathfrak m_n)$
			instead of $\mathscr C ({\mathfrak m_{n-1}})$ in the above argument.
		The other inclusion is clearly true.

		For the moment, let $\mathscr C '(\mathfrak m_n)$
			be the $\mathbb Z_p[G_n]$-submodule
			of $\widehat{E}(\mathfrak m_n)$ generated by
			$d_n$ and $d_{-1}$.
		By the trace relations on $d_n$, clearly we have
			$\mathscr C(\mathfrak m_n) \supseteq \mathscr C'(\mathfrak m_n)$.
		We now prove
		\begin{eqnarray*}
			\mathscr C(\mathfrak m_n) = \mathscr C'(\mathfrak m_n),\
			\mathscr C(\mathfrak m_n) + \mathscr C(\mathfrak m_{n-1})
				= \widehat{E}(\mathfrak m_n)
		\end{eqnarray*}
		for $n \geq 0$, simultaneously by induction.
		
		In the case $n=0$, we have
		\begin{eqnarray*}
			&&\mathscr C(\mathfrak m_0) = \widehat{E}(\mathfrak m_0)
			= \langle d_0,d_{-1}\rangle _{\mathbb Z_p[G_0]}
			= \mathscr C'(\mathfrak m_0), \\
			&&\mathscr C(\mathfrak m_0) + \mathscr C(\mathfrak m_{-1})
			= \widehat{E}(\mathfrak m_0) + \widehat{E}(\mathfrak m_{-1})
			= \widehat{E}(\mathfrak m_0)
		\end{eqnarray*}
		by Corollary \ref{generators of E^(m_n)}.
		
		In the case $n \geq 1$, by the induction hypothesis
			we have
		\begin{eqnarray}\label{induction step 1 in generator of norm subgroups}
			\widehat{E}(\mathfrak m_{n-1}) = \mathscr C(\mathfrak m_{n-1})
			+ \mathscr C(\mathfrak m_{n-2}),\
			\mathscr C(\mathfrak m_n) = \mathscr C'(\mathfrak m_n)
		\end{eqnarray}
			and by the trace relation we have
			$\mathscr C'(\mathfrak m_n) \subseteq \mathscr C'(\mathfrak m_n)$.
		Therefore, by Proposition \ref{log isoms 2} and (\ref{induction step 1 in generator of norm subgroups}), we have
		\begin{eqnarray*}
			\widehat{E}(\mathfrak m_n)
			= \langle d_n \rangle_{\mathbb Z_p[G_n]} + \widehat{E}(\mathfrak m_{n-1})
			\subseteq \mathscr C' (\mathfrak m_n) + \mathscr C(\mathfrak m_{n-1}).
		\end{eqnarray*}
		In particular, we have
			$\mathscr C(\mathfrak m_n) \subseteq \mathscr C'(\mathfrak m_n)
			+ \mathscr C (\mathfrak m_{n-1})$.
		This implies that $\mathscr C(\mathfrak m_n) =\mathscr C'(\mathfrak m_n)$.
		Indeed, if $P \in \mathscr C(\mathfrak m_n)$,
			then there exist $Q \in \mathscr C'(\mathfrak m_n)$
			and $R \in \mathscr C(\mathfrak m_{n-1})$ such that $P=Q+R$.
		Then we see that
			$R = P-Q \in \mathscr C(\mathfrak m_n) \cap \mathscr C(\mathfrak m_{n-1})
			= \widehat{E}(\mathfrak m_{-1})$.
		Note that $\widehat{E}(\mathfrak m_{-1}) \subseteq \mathscr C'(\mathfrak m_n)$
			since $d_{-1} \in \mathscr C'(\mathfrak m_n)$.
		So we get $P = Q+R \in \mathscr C'(\mathfrak m_n)$
			and thus $\mathscr C(\mathfrak m_n)= \mathscr C' (\mathfrak m_n)$.
		It is now clear that
			$\mathscr C(\mathfrak m_n) + \mathscr C(\mathfrak m_{n-1})
			= \mathscr C' (\mathfrak m_n)+\mathscr C'(\mathfrak m_{n-1})
			= \widehat{E}(\mathfrak m_n)$.
	\end{proof}
	
	\begin{rem}\label{remark on d_{-1}}
		\rm{We check here that the norm subgroup $\mathscr C(\mathfrak m_n)$
			is not a cyclic $\mathbb Z_p[G_n]$-module generated by $d_n$
			if and only if $d =[k:\mathbb Q_p] \equiv 0$ (mod $4$) and $n$ is even.
		
		(1) When $n$ is odd or $d \nequiv 0$ (mod $4$),
		we see that $d_{-1}$ is automatically contained
		in $\langle d_n \rangle _{\mathbb Z_p[G_n]}$.
		Thus in these cases we see that the norm subgroup $\mathscr C(\mathfrak m_n)$
		is a cyclic $\mathbb Z_p[G_n]$-module generated by $d_n$ for each $n$;
		\begin{eqnarray*}
			\mathscr C (\mathfrak m_n)
			= \langle d_n \rangle _{\mathbb Z_p[G_n]}.
		\end{eqnarray*}
		Indeed, when $n$ is odd, we have
		\begin{eqnarray*}
			d_{-1} = (-1)^{\frac{n+1}{2}} \Tr _{1/0} \cdots \Tr_{n-2/n-3} \Tr _{n/n-1}d_n
				\in \langle d_n \rangle_{\mathbb Z_p[G_n]}.
		\end{eqnarray*}
		When $d \nequiv 0$ (mod $4$) and $n$ is even, we have
		\begin{eqnarray*}
			d_{-1} = (-1)^{\frac{n+2}{2}} (\varphi + \varphi ^{-1})^{-1}\Tr _{0/-1} \cdots
				\Tr_{n-2/n-3} \Tr _{n/n-1}d_n
				\in \langle d_n \rangle _{\mathbb Z_p[G_n]}, 
		\end{eqnarray*}
		since $\varphi + \varphi ^{-1} \in \mathbb Z_p[G_{-1}]^{\times}$
			by lemma \ref{on varphi + varphi^{-1} and d}.

		(2) When $d \equiv 0$ (mod $4$),
		$d_{-1}$ cannot be contained in $\langle d_n \rangle _{\mathbb Z_p[G_n]}$
			for any even $n$.
		Thus in this case the norm subgroup $\mathscr C (\mathfrak m_n)$ is
			not a cyclic $\mathbb Z_p[G_n]$-module
			generated by $d_n$ for each even $n$.
		Indeed, if $\widehat{E}(\mathfrak m_0) = \langle d_0\rangle_{\mathbb Z_p[G_0]}$,
			then $\widehat{E}(\mathfrak m_{-1})
			= \langle \Tr_{0/-1}(d_0) \rangle _{\mathbb Z_p[G_{-1}]}$
			since $\Tr_{0/-1}:\widehat{E}(\mathfrak m_0)
			\rightarrow \widehat{E}(\mathfrak m_{-1})$ is surjective.
		Since $\widehat{E}(\mathfrak m_{-1}) \cong \mathbb Z_p[G_{-1}]$,
			this means that $\mathbb Z_p[G_{-1}]
			= (\varphi + \varphi ^{-1}) \mathbb Z_p[G_{-1}]$,
			which is impossible by lemma \ref{on varphi + varphi^{-1} and d}.}
	\end{rem}

	\begin{defn}
		Define $d_n^{\pm}$ by
		\begin{eqnarray*}
			d_n^+ =\left\{
			\begin{array}{ll}
				(-1)^{\frac{n+2}{2}}d_n & \text{if }n \text{ is even},\\[3mm]
				(-1)^{\frac{n+1}{2}}d_{n-1} & \text{if }n \text{ is odd},
			\end{array}
			\right.
			d_n^- =\left\{
			\begin{array}{ll}
				(-1)^{\frac{n+1}{2}}d_n & \text{if }n \text{ is odd},\\[3mm]
				(-1)^{\frac{n}{2}}d_{n-1} & \text{if }n \text{ is even}.
			\end{array}
			\right.
		\end{eqnarray*}
	\end{defn}

	\begin{rem}\label{plus minus subgroups in termes of d_n^{pm}}
		\rm{By the relation between $\mathscr C(\mathfrak m_n)$ and
			$\widehat{E}^{\pm}(\mathfrak m_n)$
			(cf. Lemma \ref{comparison between norm subgroups}),
			we can translate Proposition \ref{generators of norm subgroups}
			in terms of the plus and the minus systems of points $(d_n^+)_n$ and $(d_n^-)_n$ such that
			$\widehat{E}^{+}(\mathfrak m_n)
			= \langle d_n^+ , d_0^-\rangle_{\mathbb Z_p[G_n]}$
			and $\widehat{E}^{-}(\mathfrak m_n)
			= \langle d_n^- , d_0^-\rangle_{\mathbb Z_p[G_n]}$.
		As in Remark \ref{remark on d_{-1}},
			we see that the the plus subgroups $\widehat{E}^+(\mathfrak m_n)$
			are cyclic $\mathbb Z_p[G_n]$-modules generated by $d_n^+$ for all $n$
			if and only if $d \nequiv 0$ (mod $4$),
			on the other hand
			the minus subgroups $\widehat{E}^-(\mathfrak m_n)$
			are always cyclic $\mathbb Z_p[G_n]$-modules generated by $d_n^-$ for all $n$.}
	\end{rem}

	Let $\chi:\Delta \rightarrow \mathbb Z_p^{\times}$
		be a character of $\Delta = \Gal (k(\mu_p)/k)$.
	If $M$ is a $\mathbb Z_p[\Delta]$-module,
		then $M$ is decomposed into
		\begin{eqnarray*}
			M = \bigoplus _{\chi}\varepsilon _{\chi} M,
		\end{eqnarray*}
		where $\varepsilon _{\chi}
		= \frac{1}{p-1} \sum _{\sigma \in \Delta} \chi (\sigma) \sigma ^{-1}
		\in \mathbb Z_p[\Delta]$.
	We denote by $M^{\chi}$ the $\chi$-component $\varepsilon_{\chi}M$.
	
	Since we have $G_n \cong G_{-1} \times \Delta \times \Gal(k_n/k_0)$,
		we can regard a $\mathbb Z_p[G_n]$-module
		as a $\mathbb Z_p[\Delta]$-module.
	
	\begin{cor}\label{Z_p-ranks of norm subgroups}
		Let $\chi: \Delta \rightarrow \mathbb Z_p^{\times}$ be a character
			and $q_n = \sum _{i=0}^n(-1)^ip^{n-i}$.
		Then we have
		\begin{eqnarray*}
			\rank _{\mathbb Z_p} \mathscr C(\mathfrak m_n)^{\chi}
			=\left\{
			\begin{array}{ll}
				\displaystyle
				d(q_n+1) 
				& \text{ if } n:\text{odd and } \chi = \mathbf 1, \\[2mm]
				\displaystyle
				d q_n
				& \text{ otherwise},
			\end{array}
			\right.
		\end{eqnarray*}
		for each $n \geq 0$ and 
		\begin{eqnarray*}
			\rank _{\mathbb Z_p} \mathscr C(\mathfrak m_{-1})^{\chi}
			=\left\{
			\begin{array}{ll}
				d & \text{ if } \chi = \mathbf 1,\\[1mm]
				0 & \text{ if } \chi \neq \mathbf 1.
			\end{array}
			\right.
		\end{eqnarray*}
	\end{cor}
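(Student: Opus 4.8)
The plan is to read off the ranks from the exact sequence (\ref{exact sequence in norm subgroups}) of Proposition \ref{generators of norm subgroups}~(2) by passing to $\chi$-components and using additivity of $\mathbb Z_p$-rank. Since $\# \Delta = p-1$ is a unit in $\mathbb Z_p$, the idempotent $\varepsilon_\chi$ lies in $\mathbb Z_p[\Delta]$, so the functor $M \mapsto M^\chi$ is exact on $\mathbb Z_p[\Delta]$-modules. All maps in (\ref{exact sequence in norm subgroups}) are $G_n$-equivariant, hence $\Delta$-equivariant, where we regard $\widehat E(\mathfrak m_{-1})$ as a $\Delta$-module with trivial action (this is compatible with the inclusions, as $\Delta$ fixes $\widehat E(\mathfrak m_{-1})$ pointwise inside $\widehat E(\mathfrak m_n)$). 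Applying $(-)^\chi$ therefore yields, for each $n \geq 0$, a short exact sequence
\[
0 \longrightarrow \widehat E(\mathfrak m_{-1})^\chi \longrightarrow \mathscr C(\mathfrak m_n)^\chi \oplus \mathscr C(\mathfrak m_{n-1})^\chi \longrightarrow \widehat E(\mathfrak m_n)^\chi \longrightarrow 0,
\]
and hence the rank identity
\[
\rank_{\mathbb Z_p}\mathscr C(\mathfrak m_n)^\chi + \rank_{\mathbb Z_p}\mathscr C(\mathfrak m_{n-1})^\chi = \rank_{\mathbb Z_p}\widehat E(\mathfrak m_n)^\chi + \rank_{\mathbb Z_p}\widehat E(\mathfrak m_{-1})^\chi.
\]

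Next I would compute the two terms on the right. The logarithm $\log_{\widehat E}$ is injective (Proposition \ref{E^ is torsion-free}), $G_n$-equivariant, and its image is a full $\mathbb Z_p$-lattice in $k_n$ (Proposition \ref{log isoms 1}), so $\widehat E(\mathfrak m_n) \otimes \mathbb Q_p \cong k_n$ as $\mathbb Q_p[G_n]$-modules; thus $\rank_{\mathbb Z_p}\widehat E(\mathfrak m_n)^\chi = \dim_{\mathbb Q_p} k_n^\chi$. Since $k/\mathbb Q_p$ is unramified and $\mathbb Q_p(\mu_{p^{n+1}})/\mathbb Q_p$ totally ramified, these fields are linearly disjoint, so $k_n \cong k \otimes_{\mathbb Q_p} \mathbb Q_p(\mu_{p^{n+1}})$ with $\Delta$ acting trivially on the factor $k$. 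By the normal basis theorem $\mathbb Q_p(\mu_{p^{n+1}})$ is free of rank one over $\mathbb Q_p[\Gal(\mathbb Q_p(\mu_{p^{n+1}})/\mathbb Q_p)]$, so restricting the action to $\Delta$ gives $\mathbb Q_p[\Delta]^{\oplus p^n}$; as every character of $\Delta$ is $\mathbb Q_p$-rational, each $\chi$-component of $\mathbb Q_p[\Delta]$ is one-dimensional, whence $\dim_{\mathbb Q_p}\mathbb Q_p(\mu_{p^{n+1}})^\chi = p^n$ and $\rank_{\mathbb Z_p}\widehat E(\mathfrak m_n)^\chi = d\,p^n$ for $n \geq 0$. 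For $n=-1$, $\Delta$ fixes $k_{-1}=k$ pointwise, so $\widehat E(\mathfrak m_{-1})^\chi = \widehat E(\mathfrak m_{-1})$ has rank $d$ when $\chi = \mathbf 1$ and $0$ otherwise; since $\mathscr C(\mathfrak m_{-1}) = \widehat E(\mathfrak m_{-1})$, this already gives the asserted value of $\rank_{\mathbb Z_p}\mathscr C(\mathfrak m_{-1})^\chi$.

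Finally I would solve the recursion. Writing $r = \rank_{\mathbb Z_p}\widehat E(\mathfrak m_{-1})^\chi$ (which is $d$ if $\chi = \mathbf 1$ and $0$ otherwise), the identity above reads $\rank_{\mathbb Z_p}\mathscr C(\mathfrak m_n)^\chi + \rank_{\mathbb Z_p}\mathscr C(\mathfrak m_{n-1})^\chi = d\,p^n + r$ for $n \geq 0$, with initial term $\rank_{\mathbb Z_p}\mathscr C(\mathfrak m_{-1})^\chi = r$. Using the identity $q_n + q_{n-1} = p^n$, a straightforward induction on $n$ shows that $\rank_{\mathbb Z_p}\mathscr C(\mathfrak m_n)^\chi = d\,q_n$ in every case except $\chi = \mathbf 1$ with $n$ odd, where the accumulated contribution of the constant $r = d$ produces $d(q_n+1)$; this is exactly the claimed formula. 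The main obstacle is the rank computation $\rank_{\mathbb Z_p}\widehat E(\mathfrak m_n)^\chi = d\,p^n$, namely identifying $\widehat E(\mathfrak m_n)\otimes\mathbb Q_p$ with $k_n$ as a $\Delta$-module and extracting its $\chi$-isotypic dimension; once this is in hand, the parity dependence in the $\chi = \mathbf 1$ case falls out formally from the recursion, since it is fed only by the single nonzero term $\widehat E(\mathfrak m_{-1})^{\mathbf 1}$.
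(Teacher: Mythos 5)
Your proof is correct and follows essentially the same route as the paper: both pass to $\chi$-components of the exact sequence (\ref{exact sequence in norm subgroups}), obtain the rank recurrence $\rank_{\mathbb Z_p}\mathscr C(\mathfrak m_n)^{\chi}+\rank_{\mathbb Z_p}\mathscr C(\mathfrak m_{n-1})^{\chi}=\rank_{\mathbb Z_p}\widehat E(\mathfrak m_n)^{\chi}+\rank_{\mathbb Z_p}\widehat E(\mathfrak m_{-1})^{\chi}$, and solve it from the initial case $\mathscr C(\mathfrak m_{-1})=\widehat E(\mathfrak m_{-1})$. The only difference is that you spell out details the paper leaves implicit (exactness of $(-)^{\chi}$, the computation $\rank_{\mathbb Z_p}\widehat E(\mathfrak m_n)^{\chi}=dp^n$ via the logarithm and the normal basis theorem, and the induction using $q_n+q_{n-1}=p^n$), which is a faithful filling-in rather than a new approach.
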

	
	\begin{proof}
		Since $\mathscr C(\mathfrak m_{-1}) = \widehat{E} (\mathfrak m_{-1})
			\cong \mathbb Z_p[G_{-1}]$,
			we obtain the latter statement.
		Moreover,
			from the exact sequence (\ref{exact sequence in norm subgroups})
			we get a recurrence sequence
			\begin{eqnarray*}
				&& \rank _{\mathbb Z_p} \mathscr C (\mathfrak m_n)^{\chi}
					+ \rank _{\mathbb Z_p} \mathscr C (\mathfrak m_{n-1})^{\chi}\\
				&& =\rank _{\mathbb Z_p} \widehat{E} (\mathfrak m_{-1})^{\chi}
					+ \rank _{\mathbb Z_p} \widehat{E} (\mathfrak m_n)^{\chi},
			\end{eqnarray*}
		since the $\mathbb Z_p$-rank of $\widehat{E}(\mathfrak m_n)^{\chi}$ is $dp^n$ for each $n \geq 0$.
		The former statement follows from this.
	\end{proof}

	We introduce here some notation
		that will be used throughout the rest of the paper.
	Let $\omega _n(X) := (1+X)^{p^n}-1$ and
		$\Phi _n (X) := \sum _{i=0}^{p-1} X^{ip^{n-1}}$
		be the $p^n$-th cyclotomic polynomial.
	We define $\widetilde{\omega} _0^{\pm}(X):=1$ and
	\begin{eqnarray*}
		\displaystyle
		&&\widetilde{\omega} _n^+(X)
			= \prod _{1 \leq m \leq n, m:\text{even}} \Phi _m(1+X), \quad
		\omega _n^+(X) = X \widetilde{\omega} _n^+(X),\\
		&&\widetilde{\omega} _n^-(X)
			= \prod _{1 \leq m \leq n, m:\text{odd}} \Phi _m(1+X), \quad
		\omega _n^-(X) = X \widetilde{\omega} _n^-(X).
	\end{eqnarray*}
	Note that $\omega _n(X) = \widetilde{\omega} _n^{\mp} (X) \omega _n^{\pm}(X)$ for all $n \geq 0$.
	We write $\omega _n(X)$, $\widetilde{\omega} _n^{\pm}(X)$ and $\omega _n^{\pm}(X)$
		simply by $\omega_n$, $\widetilde{\omega}_n^{\pm}$ and $\omega _n^{\pm}$ respectively.
	
	We identify $\mathbb Z_p[G_n]$ with $\mathbb Z_p[G_0][X]
		/\langle \omega _n \rangle _{\mathbb Z_p[G_0][X]}$
		by sending $\gamma _n$ to $1+X$,
		where $\gamma _n$ is the image of $\gamma$ in $\mathbb Z_p[G_n]$.
	
	Set $q_n = \sum _{i=0}^n (-1)^{n-i}p^i$ as in Corollary \ref{Z_p-ranks of norm subgroups}
		and $q_{-1}:=1$.
	Put
	$$
		q_n^+ := \left\{
		\begin{array}{ll}
			q_n & \text{ if } n \text{ is even},\\[3mm]
			q_{n-1} & \text{ if } n \text{ is odd},
		\end{array}
		\right.
		q_n^- := \left\{
		\begin{array}{ll}
			q_n & \text{ if } n \text{ is odd},\\[3mm]
			q_{n-1} & \text{ if } n \text{ is even}.
		\end{array}
		\right.
	$$
	Note that $q_n^+ + q_n^- = p^n$ for all $n \geq 0$.
	
	For later use,
		we rephrase Corollary \ref{Z_p-ranks of norm subgroups}
		in terms of $\widehat{E}^{\pm}(\mathfrak m_n)^{\chi}$ and $q_n^{\pm}$
		as in the following corollary.
	\begin{cor}\label{Z_p-ranks of plus and minus subgroups}
		Let $\chi : \Delta \rightarrow \mathbb Z_p^{\times}$ be a character.
		Then we have
		\begin{eqnarray*}
			\rank _{\mathbb Z_p} (\widehat{E}^+ (\mathfrak m_n) ^{\chi})
			&=& d q_n^+, \\[1mm]
			\rank _{\mathbb Z_p} (\widehat{E}^- (\mathfrak m_n)^{\chi})
			&=& \left\{
			\begin{array}{lll}
				d (q_n^- + 1) & \text{if } \chi = \mathbf 1, \\[3mm]
				d q_n^- & \text{if } \chi \neq \mathbf 1.
			\end{array}
			\right.
		\end{eqnarray*}
	\end{cor}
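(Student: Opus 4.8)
The plan is to obtain this corollary as a purely formal translation of Corollary \ref{Z_p-ranks of norm subgroups}, using Lemma \ref{comparison between norm subgroups} to pass between the norm subgroups $\mathscr C(\mathfrak m_m)$ and the plus/minus subgroups $\widehat E^{\pm}(\mathfrak m_n)$. First I would record that forming the $\chi$-component is exact: since $p-1$ is a unit in $\mathbb Z_p$, the idempotent $\varepsilon_{\chi}$ splits every $\mathbb Z_p[\Delta]$-module, so an equality of $\mathbb Z_p[G_n]$-submodules (hence of $\mathbb Z_p[\Delta]$-modules) remains an equality after applying $\varepsilon_{\chi}$. Consequently the identities of Lemma \ref{comparison between norm subgroups} yield $\widehat E^{+}(\mathfrak m_n)^{\chi} = \mathscr C(\mathfrak m_n)^{\chi}$ for $n$ even and $\widehat E^{+}(\mathfrak m_n)^{\chi} = \mathscr C(\mathfrak m_{n-1})^{\chi}$ for $n$ odd, and symmetrically $\widehat E^{-}(\mathfrak m_n)^{\chi} = \mathscr C(\mathfrak m_n)^{\chi}$ for $n$ odd and $= \mathscr C(\mathfrak m_{n-1})^{\chi}$ for $n$ even. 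In particular the $\mathbb Z_p$-ranks coincide, so it remains only to read off the right-hand sides from Corollary \ref{Z_p-ranks of norm subgroups}.

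For the plus subgroup, the relevant norm index is $n$ when $n$ is even and $n-1$ when $n$ is odd, hence \emph{even} in every case. Since the exceptional term $+1$ in Corollary \ref{Z_p-ranks of norm subgroups} appears only when the index is odd and $\chi=\mathbf 1$, it is never triggered here, and I would conclude $\rank_{\mathbb Z_p}\widehat E^{+}(\mathfrak m_n)^{\chi} = d\,q_{n}$ for $n$ even and $d\,q_{n-1}$ for $n$ odd, uniformly in $\chi$. By the very definition of $q_n^{+}$ (namely $q_n$ for $n$ even and $q_{n-1}$ for $n$ odd) this is exactly $d\,q_n^{+}$.

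For the minus subgroup the relevant norm index is $n$ when $n$ is odd and $n-1$ when $n$ is even, hence \emph{odd} for $n\geq 1$ (and $-1$ when $n=0$). Thus the exceptional case of Corollary \ref{Z_p-ranks of norm subgroups} is now exactly the case $\chi=\mathbf 1$, which produces the extra $+1$ and hence the two-line formula: $\rank_{\mathbb Z_p}\widehat E^{-}(\mathfrak m_n)^{\chi}$ equals $d\,(q_{m}+1)$ for $\chi=\mathbf 1$ and $d\,q_{m}$ for $\chi\neq\mathbf 1$, where $m$ is the (odd) index; again by definition $q_m=q_n^{-}$. The only point requiring separate attention is the boundary $n=0$, where $m=-1$: here $\mathscr C(\mathfrak m_{-1})=\widehat E(\mathfrak m_{-1})$ and I would invoke the explicit rank $\rank_{\mathbb Z_p}\mathscr C(\mathfrak m_{-1})^{\chi}$ from Corollary \ref{Z_p-ranks of norm subgroups} (namely $d$ for $\chi=\mathbf 1$ and $0$ otherwise), checking that it agrees with $d\,(q_0^{-}+1)$ and $d\,q_0^{-}$ once one uses $q_0^{-}=0$, consistently with $q_0^{+}+q_0^{-}=p^{0}$.

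The computation is essentially bookkeeping, so I do not expect a genuine obstacle; the only care needed is to keep the parities straight --- making sure that the $\chi=\mathbf 1$ exception is attached to the odd-index (minus) family and absent from the even-index (plus) family --- and to handle the boundary index $-1$ in the minus case at $n=0$ by hand rather than through the generic formula.
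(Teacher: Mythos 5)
Your proposal is correct and is essentially the paper's own (implicit) argument: the paper presents this corollary with no separate proof, explicitly describing it as a rephrasing of Corollary \ref{Z_p-ranks of norm subgroups} via Lemma \ref{comparison between norm subgroups}, which is exactly the parity bookkeeping you carry out (including the exactness of taking $\chi$-components). One remark: your boundary value $q_0^- = 0$ (equivalently $q_{-1}=0$) is the reading forced by the paper's own identity $q_n^+ + q_n^- = p^n$ and by $\rank_{\mathbb Z_p}\mathscr C(\mathfrak m_{-1})^{\chi}$, so the paper's stated convention $q_{-1}:=1$ is a typo that your treatment of the $n=0$ minus case implicitly corrects.
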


	For a character $\chi$ of $\Delta$,
		we define $\mathbb Z_p[\chi]$
		to be the $\mathbb Z_p [\Delta]$-module
		which is $\mathbb Z_p$ as a $\mathbb Z_p$-module,
		and on which $\Delta$ acts via $\chi$,
		namely $\sigma \cdot x = \chi (\sigma) x$ for $\sigma \in \Delta$ and $x \in \mathbb Z_p[\chi]$.
	
	\begin{prop}\label{galois module structure of plus minus subgroups}
		Let $\chi:\Delta \rightarrow \mathbb Z_p^{\times}$ be a character.
		We have
		\begin{eqnarray*}
			\widehat{E}^+(\mathfrak m_n)^{\chi} & \cong &
			\left\{
			\begin{array}{ll}
				\displaystyle
				\frac{ \mathbb Z_p[G_{-1}][X] \oplus \mathbb Z_p[G_{-1}]}
					{\langle (\widetilde{\omega}_n^+,
					-(\varphi + \varphi ^{-1}))\rangle _{\mathbb Z_p[G_{-1}][X]}}
					& \text{ if } \chi = \mathbf 1, \\[5mm]
				\displaystyle
				\mathbb Z_p[\chi] \otimes_{\mathbb Z_p}
				\frac{\mathbb Z_p[G_{-1}][X]}
				{\langle \omega _n^+\rangle _{\mathbb Z_p[G_{-1}][X]}}
					& \text{ if } \chi \neq \mathbf 1,
			\end{array}
			\right. \\[3mm]
			\widehat{E}^-(\mathfrak m_n)^{\chi} & \cong &
			\left\{
			\begin{array}{ll}
				\displaystyle
				\mathbb Z_p[G_{-1}][X]/\langle \omega _n^-\rangle
				_{\mathbb Z_p[G_{-1}][X]}
					& \text{ if } \chi = \mathbf 1, \\[3mm]
				\displaystyle
				\mathbb Z_p[\chi] \otimes_{\mathbb Z_p}
				\frac{\mathbb Z_p[G_{-1}][X]}{\langle \widetilde{\omega}^-_n \rangle
				_{\mathbb Z_p[G_{-1}][X]}}
					& \text{ if } \chi \neq \mathbf 1,
			\end{array}
			\right.
		\end{eqnarray*}
		as $\mathbb Z_p[G_n]$-modules.
	\end{prop}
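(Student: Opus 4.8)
The plan is to produce explicit generators and relations for each $\chi$-component and then pin down the module by a $\mathbb Z_p$-rank computation against Corollary \ref{Z_p-ranks of plus and minus subgroups}. First I would fix the identification $\mathbb Z_p[G_n]^{\chi} \cong \mathbb Z_p[\chi] \otimes_{\mathbb Z_p} \mathbb Z_p[G_{-1}][X]/\langle \omega_n \rangle$ arising from $G_n \cong G_{-1} \times \Delta \times \Gal(k_n/k_0)$ and $\gamma_n \mapsto 1+X$; passing to $\chi$-components is exact because $|\Delta| = p-1$ is prime to $p$. By Remark \ref{plus minus subgroups in termes of d_n^{pm}} the groups $\widehat E^{\pm}(\mathfrak m_n)$ are generated over $\mathbb Z_p[G_n]$ by $d_n^{\pm}$ together with $d_{-1}$, and I record that $X d_{-1} = X d_0 = 0$, since $d_{-1} \in \widehat E(\mathfrak m_{-1})$ and $d_0 \in \widehat E(\mathfrak m_0)$ are fixed by $\Gamma = \Gal(k_\infty/k_0)$.

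The crucial step is to rewrite the trace relations of Proposition \ref{trace condition} inside this polynomial ring. Under $\gamma = 1+X$, applying $\Phi_m(1+X)$ to the level-$m$ point $d_m$ reproduces the corestriction $\Tr_{m/m-1}$, so Proposition \ref{trace condition}(1) reads $\Phi_m(1+X)\,d_m = -d_{m-2}$. Iterating this over the odd (respectively even) indices $1 \le m \le n$, and inserting the signs from the definition of $d_n^{\pm}$, I obtain
\[
	\widetilde{\omega}_n^-\,d_n^- = d_{-1}, \qquad
	\widetilde{\omega}_n^+\,d_n^+ = -\,d_0 .
\]
Applying $\varepsilon_{\mathbf 1}$ to the second identity and using $\varepsilon_{\mathbf 1} d_0 = \tfrac{1}{p-1}\Tr_{0/-1}(d_0) = -\tfrac{1}{p-1}(\varphi+\varphi^{-1})d_{-1}$ from Proposition \ref{trace condition}(2) turns it into $\widetilde{\omega}_n^+ d_n^+ = \tfrac{1}{p-1}(\varphi+\varphi^{-1})d_{-1}$ on the trivial component; multiplying either identity by $X$ gives $\omega_n^{\pm} d_n^{\pm} = 0$, since $X$ kills $d_{-1}$ and $d_0$.

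When $\chi \neq \mathbf 1$ we have $\varepsilon_\chi d_{-1} = 0$, so both $\widehat E^{\pm}(\mathfrak m_n)^{\chi}$ are cyclic on $d_n^{\pm}$. In the minus case $\widetilde{\omega}_n^-$ already annihilates $d_n^-$, while in the plus case $\omega_n^+$ annihilates $d_n^+$ (and $\widetilde{\omega}_n^+$ does not, since $\varepsilon_\chi d_0 \neq 0$); the stated cyclic modules therefore surject onto ours. Their $\mathbb Z_p$-ranks $d\deg\widetilde{\omega}_n^- = dq_n^-$ and $d\deg\omega_n^+ = dq_n^+$ match Corollary \ref{Z_p-ranks of plus and minus subgroups}, and since both sides are $\mathbb Z_p$-torsion-free (Proposition \ref{E^ is torsion-free} on the left, monicity of the defining polynomials on the right) the surjections are isomorphisms. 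The minus case for $\chi = \mathbf 1$ is identical with $\omega_n^-$ in place of $\widetilde{\omega}_n^-$, using $\widetilde{\omega}_n^- d_n^- = d_{-1} \neq 0$ so that the annihilator of $d_n^-$ is $\langle \omega_n^- \rangle$.

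The main obstacle is the plus group for $\chi = \mathbf 1$, which is genuinely non-cyclic exactly when $\varphi + \varphi^{-1}$ is a zero-divisor, that is when $d \equiv 0 \pmod{4}$ (Lemma \ref{on varphi + varphi^{-1} and d}). Here I would map the presented module by $e_1 \mapsto d_n^+$ and $e_2 \mapsto d_{-1}$: the defining relation $(\widetilde{\omega}_n^+, -(\varphi+\varphi^{-1}))$ is precisely the identity found above (after absorbing the unit $\tfrac{1}{p-1}$), and its $X$-multiples recover $\omega_n^+ d_n^+ = 0$ because $X$ annihilates the second summand. To compute the rank of the presented module $M$ I would use the short exact sequence
\[
	0 \longrightarrow \mathbb Z_p[G_{-1}] \longrightarrow M
	\longrightarrow \mathbb Z_p[G_{-1}][X]/\langle \widetilde{\omega}_n^+ \rangle \longrightarrow 0,
\]
where left-injectivity holds because the monic polynomial $\widetilde{\omega}_n^+$ is a non-zero-divisor; this yields $\rank_{\mathbb Z_p} M = d + d(q_n^+ - 1) = d q_n^+$. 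As $M$ is an extension of $\mathbb Z_p$-free modules and $\widehat E^+(\mathfrak m_n)$ is $\mathbb Z_p$-torsion-free, the surjection $M \twoheadrightarrow \widehat E^+(\mathfrak m_n)^{\mathbf 1}$ between torsion-free modules of equal rank is an isomorphism. The delicate points throughout are the sign bookkeeping in the iterated trace relations and the observation that $\widetilde{\omega}_n^+$, unlike $\omega_n^+$, fails to annihilate $d_n^+$ off the trivial character, which is what forces the extra factor $X$ and the non-cyclic shape in the plus case.
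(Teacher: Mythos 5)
Your proposal is correct and takes essentially the same approach as the paper: the same generators $d_n^{\pm}$ and $d_{-1}$, the same trace-derived relations $\widetilde{\omega}_n^+ d_n^+ = -d_0$, $\varepsilon_{\mathbf 1}\widetilde{\omega}_n^+ d_n^+ = \tfrac{1}{p-1}(\varphi+\varphi^{-1})d_{-1}$ and $\omega_n^{\pm}d_n^{\pm}=0$, and injectivity of the resulting surjections obtained by comparing $\mathbb Z_p$-ranks against Corollary \ref{Z_p-ranks of plus and minus subgroups} together with $\mathbb Z_p$-torsion-freeness. The only difference is organizational: you pass to $\chi$-components first and present each one separately, whereas the paper builds a single surjection $\psi$ onto the full module $\widehat{E}^{\pm}(\mathfrak m_n)$ and performs the character decomposition at the end.
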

	
	\begin{proof}
		There is a surjective homomorphism
		$$
			\psi : \frac{\mathbb Z_p[G_0][X]}
			{\langle \omega _n \rangle _{\mathbb Z_p[G_0][X]}}
			\oplus \mathbb Z_p[G_{-1}] \longrightarrow
			\langle d_n^+, d_0^- \rangle _{\mathbb Z_p[G_n]}
			= \widehat{E}^+ (\mathfrak m_n)
		$$
		obtained by sending $(1,0)$ to $d_n^+$ and $(0,1)$ to $\frac{d_0^-}{p-1} (= \frac{d_{-1}}{p-1})$.
		Since we have that
		\begin{eqnarray*}
			&&\omega _n^+ d_n^+= \omega _{n-2}^+ d_{n-2}^+ = \cdots = 0,\ \text{ and } \\
			&& \varepsilon_{\mathbf 1} \widetilde{\omega}_n^+ d_n^+
			= \frac{1}{p-1}\Tr _{0/-1}\widetilde{\omega}_n^+ d_n^+
			= (\varphi + \varphi ^{-1}) \frac{d_0^-}{p-1},
		\end{eqnarray*}
		the map $\psi$ induces a surjective homomorphism
		\begin{eqnarray*}
			\overline{\psi}: \frac{\frac{\mathbb Z_p[G_0][X]}
			{\langle \omega _n^+ \rangle _{\mathbb Z_p[G_0][X]}}
			\oplus \mathbb Z_p[G_{-1}]}
			{\langle (\varepsilon_{\mathbf 1} \widetilde{\omega}_n^+,-(\varphi + \varphi ^{-1}))
			\rangle _{\mathbb Z_p[G_0][X]}}
			\longrightarrow \langle d_n^+ , d_0^-\rangle _{\mathbb Z_p[G_n]}
			= \widehat{E}^+ (\mathfrak m_n).
		\end{eqnarray*}
		This map $\overline{\psi}$ is injective
			since the source
			and the target of $\overline{\psi}$
			are free $\mathbb Z_p$-modules of the same $\mathbb Z_p$-rank $d(p-1)q_n^+$
			(cf. Corollary \ref{Z_p-ranks of plus and minus subgroups}).
		Thus we have
		\begin{eqnarray*}
			\widehat{E}^+(\mathfrak m_n)
			& \cong & \frac{\frac{\mathbb Z_p[G_0][X]}
				{\langle \omega _n^+\rangle _{\mathbb Z_p[G_0][X]}}
				\oplus \mathbb Z_p[G_{-1}]}
				{\left\langle (\varepsilon_{\mathbf 1} \widetilde{\omega}_n^+,
				-(\varphi + \varphi ^{-1}))
				\right\rangle _{\mathbb Z_p[G_0][X]}}\\[3mm]
			& \cong & \frac{\mathbb Z_p[G_0][X] \oplus \mathbb Z_p[G_{-1}]}
				{\langle (\omega _n^+,0),
				(\varepsilon_{\mathbf 1} \widetilde{\omega}_n^+, -(\varphi + \varphi ^{-1}))
				\rangle _{\mathbb Z_p[G_0][X]}}\\[3mm]
			& \cong & \bigoplus _{\chi} \frac{ \varepsilon_\chi \mathbb Z_p[G_0][X]
				\oplus \varepsilon_\chi \mathbb Z_p[G_{-1}]}
				{\langle (\varepsilon_\chi \omega _n^+,0),
				(\varepsilon_\chi \varepsilon_{\mathbf 1} \widetilde{\omega}_n^+,
				-\varepsilon_\chi (\varphi + \varphi ^{-1}))
				\rangle _{\mathbb Z_p[G_0][X]}}
		\end{eqnarray*}
		as $\mathbb Z_p[G_n]$-modules,
			where the last isomorphism is obtained by the character decomposition.
		Since we have
		$$
			\varepsilon_\chi \mathbb Z_p[G_0][X] \oplus \varepsilon_\chi \mathbb Z_p[G_{-1}]
			\cong
			\left\{
			\begin{array}{ll}
				\displaystyle
				\mathbb Z_p[G_{-1}][X] \oplus \mathbb Z_p[G_{-1}]
					& \text{ if } \chi = \mathbf 1,\\[3mm]
					\displaystyle
				\mathbb Z_p[G_{-1}][X]
					& \text{ if } \chi \neq \mathbf 1,
			\end{array}
			\right.
		$$
		and
		\begin{eqnarray*}
			&&\langle (\varepsilon_\chi \omega _n^+,0),
			(\varepsilon_\chi \varepsilon_{\mathbf 1} \widetilde{\omega}_n^+,
			-\varepsilon_\chi (\varphi + \varphi ^{-1}))
			\rangle _{\mathbb Z_p[G_0][X]}\\[2mm]
			&&\cong
			\left\{
			\begin{array}{ll}
				\displaystyle
				\langle (\omega _n^+,0),
				(\widetilde{\omega}_n^+, -(\varphi + \varphi ^{-1}))
				\rangle_{\mathbb Z_p[G_{-1}][X]}
					& \text{ if } \chi = \mathbf 1, \\[3mm]
				\displaystyle
				\langle \omega _n^+
				\rangle_{\mathbb Z_p[G_{-1}][X]}
					& \text{ if } \chi \neq \mathbf 1
			\end{array}
			\right.
		\end{eqnarray*}
		as $\mathbb Z_p[G_{-1}][X]$-modules,
		we have
		\begin{eqnarray*}
			\widehat{E}^+(\mathfrak m_n) ^{\chi}
			& \cong & \frac{ \varepsilon_\chi \mathbb Z_p[G_0][X]
				\oplus \varepsilon_\chi \mathbb Z_p[G_{-1}]}
				{\langle (\varepsilon_\chi \omega _n^+,0),
				(\varepsilon_\chi \varepsilon_{\mathbf 1} \widetilde{\omega}_n^+,
				-\varepsilon_\chi (\varphi + \varphi ^{-1}))
				\rangle _{\mathbb Z_p[G_0][X]}}\\[3mm]
			& \cong & \left\{
			\begin{array}{ll}
				\displaystyle
				\frac{\mathbb Z_p[G_{-1}][X] \oplus \mathbb Z_p[G_{-1}]}
				{\langle (\omega _n^+,0),
				(\widetilde{\omega}_n^+, -(\varphi + \varphi ^{-1}))
				\rangle _{\mathbb Z_p[G_{-1}][X]}} &  \text{ if } \chi = \mathbf 1,\\[5mm]
				\displaystyle
				\mathbb Z_p[\chi] \otimes_{\mathbb Z_p}
				\frac{\mathbb Z_p[G_{-1}][X]}{\langle
				\omega _n^+ \rangle _{\mathbb Z_p[G_{-1}][X]}} &  \text{ if } \chi \neq \mathbf 1
			\end{array}\right.
		\end{eqnarray*}
		as $\mathbb Z_p[G_n]$-modules.
		Since $(\omega _n^+,0)
			= X(\widetilde{\omega}_n^+,-(\varphi + \varphi ^{-1}))$,
			we get the conclusion for $\widehat{E}^+(\mathfrak m_n)^{\chi}$.\\
				
		Similarly to the above, we have
		\begin{eqnarray*}
			\widehat{E}^-(\mathfrak m_n)
			&=& \langle d_n^- \rangle _{\mathbb Z_p[G_n]}\\[3mm]
			& \cong & \mathbb Z_p[G_0][X] / \langle
				\omega _n^-, (\sigma -1)\widetilde{\omega _n}^-
				| \sigma \in \Delta \rangle _{\mathbb Z_p[G_0][X]}\\[3mm]
			& \cong & \frac{\mathbb Z_p[G_{-1}][X]}{\langle
				\omega _n^-
				\rangle _{\mathbb Z_p[G_{-1}][X]}}
				\oplus \bigoplus _{\chi \neq \mathbf 1}
				\left( \mathbb Z_p[\chi] \otimes _{\mathbb Z_p}
				\frac{\mathbb Z_p[G_{-1}][X]}
				{\langle \widetilde{\omega}_n^-
				\rangle _{\mathbb Z_p[G_{-1}][X]}}
				\right)
		\end{eqnarray*}
		as $\mathbb Z_p[G_n]$-modules.
		So we get the conclusion for $\widehat{E}^-(\mathfrak m_n)^{\chi}$.
	\end{proof}
	
	\begin{rem}\label{remark on the case d nequiv 0 (mod 4)}
		\rm{When $d \nequiv 0$ (mod $4$) and $\chi = \mathbf 1$,
			the description of the Galois module $\widehat{E}^+(\mathfrak m_n)^{\chi}$
			in Proposition \ref{galois module structure of plus minus subgroups}
			can be made more simpler.
		Explicitly, we claim that the homomorphism
		\begin{eqnarray*}
			\mathbb Z_p[G_{-1}][X] / \langle \omega _n^+\rangle
			\overset{\simeq}{\longrightarrow}
				\frac{\mathbb Z_p[G_{-1}][X] \oplus \mathbb Z_p[G_{-1}]}
				{\langle (\widetilde{\omega}_n^+,-(\varphi + \varphi ^{-1}))\rangle}
		\end{eqnarray*}
		given by $x \mapsto (x,0)$ is an isomorphism.
		Indeed, since $\varphi + \varphi ^{-1} \in \mathbb Z_p[G_{-1}]^{\times}$
			in this case (see Lemma \ref{on varphi + varphi^{-1} and d}),
			$(x,y) \in \mathbb Z_p[G_{-1}][X]\oplus \mathbb Z_p[G_{-1}]$ is equivalent to
			$(x+y(\varphi + \varphi ^{-1})^{-1}\widetilde{\omega}_n^+,0)$
			and thus the map is surjective.
		On the other hand, if $(x,0) \in \langle
			(\widetilde{\omega}_n^+,-(\varphi + \varphi ^{-1}))\rangle$
			for $x \in \mathbb Z_p[G_{-1}][X]$,
			then there exists $a(X) \in \mathbb Z_p[G_{-1}][X]$ such that
			$x = a(X) \widetilde{\omega}_n^+$ and $0 = -a(0) (\varphi + \varphi^{-1})$.
		Again by Lemma \ref{on varphi + varphi^{-1} and d}, we see that $a(0)=0$. So we get
			$x = \frac{a(X)}{X}\omega _n^+ \in \langle \omega _n^+\rangle$
			and thus the map is injective.}
	\end{rem}
	
	In the rest of this paper,
		we abbreviate $\mathbb Z_p[G_{-1}][X]$-modules
		$\langle S \rangle_{\mathbb Z_p[G_{-1}][X]}$ generated by some set $S$
		to $\langle S \rangle$
		as in the above remark.

\subsection{The plus and the minus local conditions}
	In this subsection, we study the $\Lambda$-module
		$(\widehat{E}^{\pm}(\mathfrak m_{\infty})^{\chi} \otimes \mathbb Q_p/\mathbb Z_p)^{\vee}$
		and prove Proposition \ref{structure of the plus and the minus local conditions}.
	We also study the $\Lambda$-module
		$$\left( \frac{H^1(k_{\infty},E[p^{\infty}])}
		{\widehat{E}^{\pm}(\mathfrak m_{\infty}) \otimes \mathbb Q_p/\mathbb Z_p } \right)^{\vee}$$
		and prove Proposition \ref{Lambda module structure of H^1/E^{pm}}.\\
	
	We first study $(\widehat{E}^{\pm}(\mathfrak m_{\infty})^{\chi} \otimes \mathbb Q_p/\mathbb Z_p)^{\vee}$.
	
	Since $\widehat{E}^{\pm}(\mathfrak m_{\infty})$ are $\mathbb Z_p$-torsion-free,
		we have an exact sequence
	\begin{eqnarray*}
		0  \longrightarrow
		\widehat{E}^{\pm} (\mathfrak m_{\infty})^{\chi} \longrightarrow
		\widehat{E}^{\pm} (\mathfrak m_{\infty})^{\chi}
			\otimes \mathbb Q_p \longrightarrow
		\widehat{E}^{\pm} (\mathfrak m_{\infty})^{\chi}
			\otimes \mathbb Q_p/\mathbb Z_p \longrightarrow
		0.
	\end{eqnarray*}
	From this exact sequence,
		we get the $\Gamma _n$-invariant-coinvariant exact sequence
	\begin{eqnarray}\label{invariant coinvariant exact seqn's concerning local conditions}
		0
		\longrightarrow 
			\widehat{E}^{\pm}(\mathfrak m_n)^{\chi} \otimes \mathbb Q_p/\mathbb Z_p
		\longrightarrow \left(
			\widehat{E}^{\pm}(\mathfrak m_{\infty})^{\chi}
			\otimes \mathbb Q_p/\mathbb Z_p \right) ^{\Gamma _n}
		\nonumber \\[1mm]
		\longrightarrow \left(
			\widehat{E}^{\pm}(\mathfrak m_{\infty}) ^{\chi} \right) _{\Gamma _n} [p^{\infty}]
		\longrightarrow 0,
	\end{eqnarray}
	for each $n \geq 0$.
	We will compute the rightmost modules $\left( \widehat{E}^{\pm}
		(\mathfrak m_{\infty})^{\chi}\right) _{\Gamma_n}[p^{\infty}]$
		for all $n \geq 0$
		to study the $\Lambda$-module
		$(\widehat{E}^{\pm}(\mathfrak m_{\infty})^{\chi}
		\otimes \mathbb Q_p/\mathbb Z_p)^{\vee}$.
		
	Define $\delta$ by
	\begin{eqnarray*}
		\delta =
		\left\{
		\begin{array}{ll}
			0 & \text{ if } d \nequiv 0\ (\text{mod } 4)
				\text{ or } \chi \neq \mathbf 1,\\[2mm]
			2 & \text{ otherwise}.
		\end{array}
		\right.
	\end{eqnarray*}.

	\begin{prop}\label{pre-key proposition}
		Let $\chi : \Delta \rightarrow \mathbb Z_p^{\times}$ be a character.
		Then $\left( ( \widehat{E}^{\pm}(\mathfrak m_{\infty}) ^{\chi} ) _{\Gamma _n} [p^{\infty}] \right)^{\vee}$
			are free $\mathbb Z_p$-modules for all $n$, and we have
		\begin{eqnarray*}
			\rank _{\mathbb Z_p}
			\left( ( \widehat{E}^+(\mathfrak m_{\infty}) ^{\chi} ) _{\Gamma _n} [p^{\infty}] \right)^{\vee}
			&=& d q_n^- + \delta ,\\[3mm]
			\rank _{\mathbb Z_p}
			\left( ( \widehat{E}^-(\mathfrak m_{\infty}) ^{\chi} ) _{\Gamma _n} [p^{\infty}] \right)^{\vee}
			&=& \left\{
			\begin{array}{ll}
				d (q_n^+ -1) & \text{ if } \chi = \mathbf 1,\\[3mm]
				d q_n^+ & \text{ if } \chi \neq \mathbf 1.
			\end{array}\right.
		\end{eqnarray*}
		
		More precisely, we have
		\begin{eqnarray*}
			&& \left( \widehat{E}^+(\mathfrak m_{\infty}) ^{\chi} \right) _{\Gamma _n}[p^{\infty}]\\[1mm]
			&& \qquad \cong \left\{
				\begin{array}{ll}
				\left(
				\frac{\mathbb Z_p[G_{-1}][X]}
				{\langle \widetilde{\omega} _n^- \rangle}
				\oplus \Ann _{\mathbb Z_p[G_{-1}]}(\varphi + \varphi ^{-1})
				\right)
				\otimes \mathbb Q_p/\mathbb Z_p & \text{ if } \chi = \mathbf 1,\\[5mm]
				\left(
				\frac{\mathbb Z_p[G_{-1}][X]}{\langle \widetilde{\omega }_n^- \rangle}
				\right) \otimes \mathbb Q_p/\mathbb Z_p & \text{ if } \chi \neq \mathbf 1,
				\end{array}
			\right. \\[3mm]
			&& \left( \widehat{E}^-(\mathfrak m_{\infty}) ^{\chi} \right) _{\Gamma _n} [p^{\infty}]\\[1mm]
			&& \qquad \cong  \left\{
				\begin{array}{ll}
					\displaystyle
					\frac{\mathbb Z_p[G_{-1}][X]}
					{\langle \widetilde{\omega} _n^+ \rangle}
					\otimes \mathbb Q_p/\mathbb Z_p & \text{ if } \chi = \mathbf 1,\\[5mm]
					\displaystyle
					\frac{\mathbb Z_p[G_{-1}][X]}
					{\langle \omega _n^+ \rangle}
					\otimes \mathbb Q_p/\mathbb Z_p & \text{ if } \chi \neq \mathbf 1
				\end{array}
			\right.
		\end{eqnarray*}
		as $\mathbb Z_p$-modules.
	\end{prop}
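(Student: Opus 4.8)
The plan is to compute the $\Gamma_n$-coinvariants $\bigl(\widehat{E}^{\pm}(\mathfrak m_{\infty})^{\chi}\bigr)_{\Gamma_n}$ directly and then isolate their $p$-power torsion. Writing $M=\widehat{E}^{\pm}(\mathfrak m_{\infty})^{\chi}$ and recalling that the $\Gamma_n$-coinvariants are $M_{\Gamma_n}=M/\omega_n M$, I would first use that $-\otimes_{\Lambda}\Lambda/\omega_n$ commutes with direct limits to reduce to the finite levels, namely $M_{\Gamma_n}=\varinjlim_{m}\bigl(\widehat{E}^{\pm}(\mathfrak m_m)^{\chi}/\omega_n\bigr)$, the transition maps being induced by the inclusions $\widehat{E}^{\pm}(\mathfrak m_m)\hookrightarrow\widehat{E}^{\pm}(\mathfrak m_{m+1})$. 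Into each finite-level term I substitute the explicit presentation of Proposition~\ref{galois module structure of plus minus subgroups}, so that every term becomes a quotient of $\mathbb Z_p[G_{-1}][X]$ (with an extra free summand $\mathbb Z_p[G_{-1}]$ in the plus, $\chi=\mathbf 1$ case) by an ideal generated by the polynomials $\omega_m^{\pm}$, $\widetilde{\omega}_m^{\pm}$ together with $\omega_n$.

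The heart of the argument is the finite-level ideal computation. Using $\omega_n=X\,\widetilde{\omega}_n^{+}\,\widetilde{\omega}_n^{-}$ and the factorisation of each $\omega_m^{\pm}$, $\widetilde{\omega}_m^{\pm}$ into the pairwise coprime monic polynomials $X$ and $\Phi_j(1+X)$, I would split off the factor common with $\omega_n$ by the ideal identity $(af,ag)=a(f,g)$, leaving the quotient as an extension of a persistent part (quotient by the common factor, independent of $m$) by a part cut out by the growing coprime factor $\widetilde{\omega}_m^{\mp}/\widetilde{\omega}_n^{\mp}$. The key numerical input is that $\Phi_m(1+X)$ specialises to $\Phi_m(1)=p$ along $X=0$, and more generally that on the support complementary to the common factor each new cyclotomic factor acts as $p$ times a unit. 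Consequently the transition maps there are multiplication by a power of $p$ times a unit, so the colimit on that part is $p$-divisible, i.e. of the form $N\otimes\mathbb Q_p/\mathbb Z_p$; this is precisely the $p$-power torsion, while the persistent part stays $\mathbb Z_p$-free and contributes only to the torsion-free $\mathbb Q_p$-part killed by $[p^{\infty}]$.

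Carrying this out in each of the four cases yields
\[
\bigl(\widehat{E}^{+}(\mathfrak m_{\infty})^{\chi}\bigr)_{\Gamma_n}[p^{\infty}]\cong\Bigl(\mathbb Z_p[G_{-1}][X]/\langle\widetilde{\omega}_n^{-}\rangle\Bigr)\otimes\mathbb Q_p/\mathbb Z_p,\qquad \bigl(\widehat{E}^{-}(\mathfrak m_{\infty})^{\chi}\bigr)_{\Gamma_n}[p^{\infty}]\cong\Bigl(\mathbb Z_p[G_{-1}][X]/\langle\omega_n^{+}\rangle\Bigr)\otimes\mathbb Q_p/\mathbb Z_p
\]
for $\chi\neq\mathbf 1$ (with $\omega_n^{+}$ replaced by $\widetilde{\omega}_n^{+}$ in the minus case, and the annihilator summand adjoined in the plus case, when $\chi=\mathbf 1$, as below). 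Since each cutting polynomial is monic in $X$, the quotient $\mathbb Z_p[G_{-1}][X]/\langle\ast\rangle$ is $\mathbb Z_p$-free of rank $d\deg(\ast)$; hence its tensor with $\mathbb Q_p/\mathbb Z_p$ has Pontryagin dual a free $\mathbb Z_p$-module of the same rank, which settles the freeness claim. The stated ranks then follow from the degree identities $\deg\widetilde{\omega}_n^{-}=q_n^{-}$, $\deg\omega_n^{+}=q_n^{+}$ and $\deg\widetilde{\omega}_n^{+}=q_n^{+}-1$, matching $dq_n^{-}$, $dq_n^{+}$ and $d(q_n^{+}-1)$ respectively.

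It remains to treat the plus, $\chi=\mathbf 1$ case, which is where the main difficulty lies and where the term $\delta$ enters. Here the presentation in Proposition~\ref{galois module structure of plus minus subgroups} has two generators subject to the single relation $(\widetilde{\omega}_n^{+},-(\varphi+\varphi^{-1}))$, so $M/\omega_n M$ no longer reduces to one principal quotient: applying the coinvariant functor to this relation produces, beyond the persistent part, a contribution governed by the kernel of multiplication by $\varphi+\varphi^{-1}$ on $\mathbb Z_p[G_{-1}]$. The hard part will be to track this two-generator relation compatibly through the direct limit and to show that the resulting extra summand is exactly $\Ann_{\mathbb Z_p[G_{-1}]}(\varphi+\varphi^{-1})\otimes\mathbb Q_p/\mathbb Z_p$; once this is identified, Lemma~\ref{on the annihilator of varphi + varphi^{-1}} shows that this annihilator is $\mathbb Z_p$-free of rank $2$ precisely when $d\equiv 0\ (\mathrm{mod}\ 4)$ and is $0$ otherwise, which is exactly the definition of $\delta$ and completes the count $dq_n^{-}+\delta$.
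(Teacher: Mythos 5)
Your strategy coincides with the paper's: reduce to finite levels by writing $(\widehat{E}^{\pm}(\mathfrak m_{\infty})^{\chi})_{\Gamma_n}[p^{\infty}]$ as a direct limit of $(\widehat{E}^{\pm}(\mathfrak m_m)^{\chi}/\omega_n)[p^{\infty}]$, substitute the presentations of Proposition \ref{galois module structure of plus minus subgroups}, split off the factor of the ideal common with $\omega_n$, and use that each $\Phi_j(1+X)$ with $j>n$ is congruent to $p$ modulo $\omega_n$, so that the complementary part becomes divisible in the limit while the persistent quotient, being cut out by a distinguished polynomial, stays $\mathbb Z_p$-free. Your execution of this in the three principal (cyclic) cases --- both minus cases and the plus case with $\chi\neq\mathbf 1$ --- is correct, and the degree identities $\deg\widetilde{\omega}_n^-=q_n^-$, $\deg\omega_n^+=q_n^+$, $\deg\widetilde{\omega}_n^+=q_n^+-1$ do give the stated ranks. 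Note, though, that these are exactly the cases the paper dismisses with ``we can prove the rest of the claims similarly.''

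The genuine gap is the plus, $\chi=\mathbf 1$ case --- the only case in which $\delta$ can be nonzero, and the case whose proof the paper actually writes out. You correctly observe that the presentation there is not principal, so your identity $\langle af,ag\rangle=a\langle f,g\rangle$ is unavailable, and you correctly guess the answer; but ``the hard part will be to track this two-generator relation compatibly through the direct limit'' is a statement of the problem, not an argument. What is needed, and what the paper supplies, is a finite-level identification of the torsion: for $m,n$ even and $r=\frac{m-n}{2}$, using $\omega_m^+\equiv p^{r}\omega_n^+$ and $\widetilde{\omega}_m^+\equiv p^{r}\widetilde{\omega}_n^+ \pmod{\omega_n}$, the assignment $(x,y)\mapsto (x\omega_n^+ + y\widetilde{\omega}_n^+,\,0)$ defines an injection
\[
\frac{\mathbb Z_p[G_{-1}][X]}{\langle p^{r}, \widetilde{\omega}_n^-\rangle}\oplus\frac{\Ann_{\mathbb Z_p[G_{-1}]}(\varphi+\varphi^{-1})}{\langle p^{r}\rangle}
\hookrightarrow
\frac{\mathbb Z_p[G_{-1}][X]\oplus\mathbb Z_p[G_{-1}]}{\langle(\widetilde{\omega}_m^+,-(\varphi+\varphi^{-1})),\,(\omega_n,0)\rangle},
\]
whose source is killed by $p^{r}$ and whose cokernel is $\mathbb Z_p$-free: the cokernel sits in an exact sequence with kernel $\mathbb Z_p[G_{-1}][X]/\langle\omega_n^+,\alpha\widetilde{\omega}_n^+\rangle$ and quotient $\mathbb Z_p[G_{-1}]/\langle\varphi+\varphi^{-1}\rangle$, where $\alpha$ generates $\Ann_{\mathbb Z_p[G_{-1}]}(\varphi+\varphi^{-1})$, and both of these are $\mathbb Z_p$-free. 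Only this pins down the $p$-power torsion at each level $m$ as the displayed source; the limit over $m$ then produces the extra summand $\Ann_{\mathbb Z_p[G_{-1}]}(\varphi+\varphi^{-1})\otimes\mathbb Q_p/\mathbb Z_p$, and Lemma \ref{on the annihilator of varphi + varphi^{-1}} converts it into the contribution $\delta$. Until this (or an equivalent computation) is carried out, the central case of the proposition, and hence the formula $dq_n^-+\delta$, remains unproved.
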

	
	\begin{proof}
		We prove the claim for
			$\left( \widehat{E}^+(\mathfrak m_{\infty}) ^{\chi} \right) _{\Gamma _n} [p^{\infty}]$
			in the case where $\chi = \mathbf 1$.
		We can prove the rest of the claims similarly.
		
		We have
			\begin{eqnarray}\label{ingredient 1 in pre-key proposition}
				\left( \widehat{E}^+(\mathfrak m_{\infty}) ^{\chi} \right) _{\Gamma _n} [p^{\infty}]
				&=& \left (\widehat{E}^+(\mathfrak m_{\infty})^{\chi}
					/\omega _n \widehat{E}^+(\mathfrak m_{\infty})^{\chi} \right) [p^{\infty}]
					\nonumber \\[3mm]
				& \cong & \underset{\substack{\longrightarrow \\ m \geq n}}{\lim}\
					\left( \widehat{E}^+(\mathfrak m_m)^{\chi}
					/\omega _n \widehat{E}^+(\mathfrak m_m)^{\chi} \right) [p^{\infty}],
			\end{eqnarray}
			where transition maps
			$$
				\left( \widehat{E}^+(\mathfrak m_m)^{\chi}
					/\omega _n \widehat{E}^+(\mathfrak m_m)^{\chi} \right) [p^{\infty}]
					\longrightarrow
					\left( \widehat{E}^+(\mathfrak m_{m+1})^{\chi}
					/\omega _n \widehat{E}^+(\mathfrak m_{m+1})^{\chi} \right) [p^{\infty}]
			$$
			are multiplication-by-$p$ maps when $m$ is odd
			and identity maps when $m$ is even.
		We will calculate
			$\left(\widehat{E}^+(\mathfrak m_m)^{\chi}
			/\omega _n \widehat{E}^+(\mathfrak m_m)^{\chi} \right)[p^{\infty}]$
			for each $n$, $m$.
		Since $\widehat{E}^+(\mathfrak m_m)=\widehat{E}^+(\mathfrak m_{m-1})$ if $m$ is odd,
			we may assume $m$ is even.

		We consider the case $n$ is even.
		By Proposition \ref{galois module structure of plus minus subgroups} we have
		\begin{eqnarray}\label{ingredient 2 in pre-key proposition}
			\widehat{E}^+ (\mathfrak m_m)^{\chi}
			/\omega _n \widehat{E}^+ (\mathfrak m_m)^{\chi} \cong
			\frac{\mathbb Z_p[G_{-1}][X] \oplus \mathbb Z_p[G_{-1}]}
			{\left\langle (\widetilde{\omega} _m^+,-(\varphi + \varphi ^{-1})),
			(\omega _n,0)\right\rangle}.
		\end{eqnarray}
		
		We can show that
		\begin{eqnarray}\label{torsion parts of coinvariants}
			\frac{\mathbb Z_p[G_{-1}][X] \oplus \mathbb Z_p[G_{-1}]}
				{\left\langle (\widetilde{\omega} _m^+,-(\varphi + \varphi ^{-1})),
				(\omega _n,0)\right\rangle}[p^{\infty}]
			\cong \frac{\mathbb Z_p[G_{-1}][X]}
				{\langle p^{\frac{m-n}{2}}, \widetilde{\omega}_n^- \rangle}
				\oplus \frac{\Ann_{\mathbb Z_p[G_{-1}]}(\varphi + \varphi ^{-1})}
				{\langle p^{\frac{m-n}{2}}\rangle}.
		\end{eqnarray}
		Indeed,
			since we have $\omega _m^+ \equiv p^{\frac{m-n}{2}} \omega _n^+$ (mod $\omega _n$)
			and $\omega _n = \widetilde{\omega }_n^- \omega _n^+$,
			there is an exact sequence
		\begin{eqnarray}\label{decomposition into torsion and free}
			0 & \longrightarrow &
				\frac{\mathbb Z_p[G_{-1}][X]}
				{\langle p^{\frac{m-n}{2}}, \widetilde{\omega} _n^- \rangle}
				\oplus \frac{\Ann_{\mathbb Z_p[G_{-1}]}(\varphi + \varphi ^{-1})}
				{\langle p^{\frac{m-n}{2}}\rangle}\nonumber \\
			& \longrightarrow &
				\frac{\mathbb Z_p[G_{-1}][X] \oplus \mathbb Z_p[G_{-1}]}
				{\left\langle (\widetilde{\omega} _m^+,-(\varphi + \varphi ^{-1})),
				(\omega _n,0)\right\rangle} \nonumber \\[1mm]
			& \longrightarrow &
				\frac{\mathbb Z_p[G_{-1}][X] \oplus \mathbb  Z_p[G_{-1}]}
				{\langle (\widetilde{\omega} _m^+,-(\varphi + \varphi ^{-1})),
				(\omega _n^+,0), (\alpha \widetilde{\omega}_n^+,0)\rangle}
			\longrightarrow 0,
		\end{eqnarray}
		where the first map is $(x,y) \mapsto (x \omega _n^+ + y \widetilde{\omega}_n^+,0)$
			and $\alpha$ is a generator of the $\mathbb Z_p[G_{-1}]$-module
			$\Ann _{\mathbb Z_p[G_{-1}]} (\varphi + \varphi ^{-1})$
			(cf. Lemma \ref{on the annihilator of varphi + varphi^{-1}}).
		There is an another exact sequence
		\begin{eqnarray}\label{decomposition into free and free 1}
			0 & \longrightarrow &
				\frac{\mathbb Z_p[G_{-1}][X]}{\langle \omega _n^+,
				\alpha \widetilde{\omega}_n^+\rangle} \nonumber \\
			& \longrightarrow &
				\frac{\mathbb Z_p[G_{-1}][X] \oplus \mathbb  Z_p[G_{-1}]}
				{\langle (\widetilde{\omega} _m^+,-(\varphi + \varphi ^{-1})),
				(\omega _n^+,0), (\alpha \widetilde{\omega}_n^+,0)\rangle} \nonumber \\
			& \longrightarrow &
				\frac{\mathbb Z_p[G_{-1}]}{\langle \varphi + \varphi ^{-1}\rangle}
				\longrightarrow 0,
		\end{eqnarray}
		whose leftmost and rightmost modules are both $\mathbb Z_p$-free,
			where the first map is $x \mapsto (x,0)$
			and the second map is $(x,y) \mapsto y$.
		Thus the rightmost module in (\ref{decomposition into torsion and free}),
			which is the same as the middle module in (\ref{decomposition into free and free 1}),
			is $\mathbb Z_p$-free.
		Our claim (\ref{torsion parts of coinvariants}) follows from this.

		By (\ref{ingredient 1 in pre-key proposition}), (\ref{ingredient 2 in pre-key proposition}),
		and (\ref{torsion parts of coinvariants}), we get
		\begin{eqnarray*}
			\left(\widehat{E}^+ (\mathfrak m_{\infty})^{\chi} \right) _{\Gamma _n} [p^{\infty}]
			& \cong & \underset{\substack{\longrightarrow \\ m \geq n}}{\lim}\
				\left( \widehat{E}^{\pm}(\mathfrak m_m)^{\chi}
				/\omega _n \widehat{E}^{\pm}(\mathfrak m_m)^{\chi} \right) [p^{\infty}] \\[1mm]
			& \cong & \underset{\substack{\longrightarrow \\ m \geq n}}{\lim}\
				\frac{\mathbb Z_p[G_{-1}][X] \oplus \mathbb Z_p[G_{-1}]}
				{\left\langle (\widetilde{\omega} _m^+,-(\varphi + \varphi ^{-1})),
				(\omega _n,0)\right\rangle}[p^{\infty}] \\[1mm]
			& \cong & \underset{\substack{\longrightarrow \\ m \geq n}}{\lim}\
				\frac{\mathbb Z_p[G_{-1}][X]}
				{\langle p^{\frac{m-n}{2}}, \widetilde{\omega}_n^- \rangle}
				\oplus \frac{\Ann_{\mathbb Z_p[G_{-1}]}
				(\varphi + \varphi ^{-1})}{\langle p^{\frac{m-n}{2}}\rangle} \\[1mm]
			& \cong & \left( \frac{\mathbb Z_p[G_{-1}][X]}
				{\langle \widetilde{\omega}_n^- \rangle}
				\oplus \Ann _{\mathbb Z_p[G_{-1}]}(\varphi + \varphi ^{-1})\right)
				\otimes \mathbb Q_p/\mathbb Z_p
		\end{eqnarray*}
		when $n$ is even.
		By replacing $p^{\frac{m-n}{2}}$ with $p^{\frac{m-(n-1)}{2}}$ in the above discussion,
			we get the statement also in the case where $n$ is odd.
		
		From this description, we see that
			$\left((\widehat{E}^+ (\mathfrak m_{\infty})^{\chi} ) _{\Gamma _n} [p^{\infty}]\right)^{\vee}$
			is $\mathbb Z_p$-free and
		\begin{eqnarray*}
			&& \rank _{\mathbb Z_p}
			\left((\widehat{E}^+ (\mathfrak m_{\infty})^{\chi} ) _{\Gamma _n} [p^{\infty}] \right)^{\vee} \\
			&& = \rank _{\mathbb Z_p} \left( \frac{\mathbb Z_p[G_{-1}][X]}
				{\langle \widetilde{\omega}_n^-\rangle} \right)
				+ \rank _{\mathbb Z_p} \left( \Ann _{\mathbb Z_p[G_{-1}]}(\varphi + \varphi ^{-1}) \right) \\
			&& = d q_n^- + \delta.
		\end{eqnarray*}
	\end{proof}

	\begin{cor}\label{Z_p-ranks of Gamma_n coinvariants}
		Let $\chi :\Delta \rightarrow \mathbb Z_p^{\times}$ be a character.
		Then the $\Gamma_n$-coinvariants
			$( ( \widehat{E}^{\pm}(\mathfrak m_{\infty})^{\chi}
			\otimes \mathbb Q_p/\mathbb Z_p ) ^{\vee})_{\Gamma_n}$
			are free $\mathbb Z_p$-modules for all $n$, and we have
		\begin{eqnarray*}
			&& \rank _{\mathbb Z_p} \left(\left(
				\widehat{E}^+(\mathfrak m_{\infty})^{\chi}
				\otimes \mathbb Q_p/\mathbb Z_p
				\right) ^{\vee}\right)_{\Gamma _n}
			= dp^n + \delta ,\\
			&& \rank _{\mathbb Z_p} \left(\left(
				\widehat{E}^-(\mathfrak m_{\infty})^{\chi}
				\otimes \mathbb Q_p/\mathbb Z_p
				\right) ^{\vee}\right)_{\Gamma _n}
			= dp^n.
		\end{eqnarray*}
	\end{cor}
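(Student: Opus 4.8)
The plan is to convert the $\Gamma_n$-coinvariants of the Pontryagin dual into the $\Gamma_n$-invariants of the original discrete module by duality, and then extract the ranks from the invariant-coinvariant exact sequence together with Proposition \ref{pre-key proposition}. Write $M^{\pm} = \widehat{E}^{\pm}(\mathfrak m_{\infty})^{\chi} \otimes \mathbb Q_p/\mathbb Z_p$. Since Pontryagin duality is exact and interchanges invariants with coinvariants, we have a canonical isomorphism $\left( (M^{\pm})^{\vee} \right)_{\Gamma _n} \cong \left( (M^{\pm})^{\Gamma _n} \right)^{\vee}$. In particular the $\mathbb Z_p$-rank of the left-hand side equals the $\mathbb Z_p$-corank of $(M^{\pm})^{\Gamma _n}$, and the left-hand side is $\mathbb Z_p$-free exactly when $(M^{\pm})^{\Gamma _n}$ is $\mathbb Z_p$-divisible.

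First I would feed this into the invariant-coinvariant sequence (\ref{invariant coinvariant exact seqn's concerning local conditions}),
\[
0 \longrightarrow \widehat{E}^{\pm}(\mathfrak m_n)^{\chi} \otimes \mathbb Q_p/\mathbb Z_p \longrightarrow (M^{\pm})^{\Gamma _n} \longrightarrow \left( \widehat{E}^{\pm}(\mathfrak m_{\infty})^{\chi} \right) _{\Gamma _n}[p^{\infty}] \longrightarrow 0.
\]
The leftmost term is $\mathbb Z_p$-divisible because $\widehat{E}^{\pm}(\mathfrak m_n)^{\chi}$ is $\mathbb Z_p$-free, and its corank is therefore $\rank _{\mathbb Z_p} \widehat{E}^{\pm}(\mathfrak m_n)^{\chi}$, given by Corollary \ref{Z_p-ranks of plus and minus subgroups}. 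The rightmost term is $\mathbb Z_p$-divisible by the explicit description in Proposition \ref{pre-key proposition}, where it appears as $(\cdots) \otimes \mathbb Q_p/\mathbb Z_p$, and its corank equals $\rank _{\mathbb Z_p}\left( (\widehat{E}^{\pm}(\mathfrak m_{\infty})^{\chi})_{\Gamma _n}[p^{\infty}]\right)^{\vee}$, again supplied by Proposition \ref{pre-key proposition}. Being an extension of one $\mathbb Z_p$-divisible group by another, $(M^{\pm})^{\Gamma _n}$ is itself $\mathbb Z_p$-divisible, so its dual $\left( (M^{\pm})^{\vee}\right)_{\Gamma _n}$ is $\mathbb Z_p$-free; this settles the freeness assertion.

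Finally I would use that $\mathbb Z_p$-corank is additive along this short exact sequence. For the plus part the two contributions are $d q_n^+$ and $d q_n^- + \delta$, summing to $dp^n + \delta$ via the identity $q_n^+ + q_n^- = p^n$. For the minus part, the case $\chi = \mathbf 1$ contributes $d(q_n^- + 1) + d(q_n^+ - 1)$ and the case $\chi \neq \mathbf 1$ contributes $d q_n^- + d q_n^+$, both collapsing to $dp^n$. I do not expect a genuine obstacle here; the only points demanding care are the divisibility bookkeeping needed to identify ``corank'' with ``$\mathbb Z_p$-rank of the dual'' term by term, and the reconciliation of the $\delta$ contributions — namely that $\delta$ survives on the plus side while the $\pm d$ cancels on the minus, $\chi = \mathbf 1$ side — which is exactly where the asymmetry between the plus and minus conclusions originates.
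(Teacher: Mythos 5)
Your proposal is correct and is essentially the paper's own argument: the paper's (one-line) proof consists precisely of combining the exact sequence (\ref{invariant coinvariant exact seqn's concerning local conditions}), Proposition \ref{pre-key proposition}, and Corollary \ref{Z_p-ranks of norm subgroups} (equivalently, its rephrasing in Corollary \ref{Z_p-ranks of plus and minus subgroups}), with the duality identification $\left(\left(M^{\pm}\right)^{\vee}\right)_{\Gamma_n} \cong \left(\left(M^{\pm}\right)^{\Gamma_n}\right)^{\vee}$ and the divisibility bookkeeping that you spell out left implicit. Your rank arithmetic via $q_n^+ + q_n^- = p^n$, including the survival of $\delta$ on the plus side and the cancellation of $\pm d$ in the minus, $\chi = \mathbf 1$ case, matches the paper exactly.
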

	
	\begin{proof}
		It follows from Corollary \ref{Z_p-ranks of norm subgroups},
			Proposition \ref{pre-key proposition}
			and the exact sequence (\ref{invariant coinvariant exact seqn's concerning local conditions}).
	\end{proof}

	\begin{rem}\label{Remark on Z_p-ranks of Gamma_n coinvariants}
		\rm{From Corollary \ref{Z_p-ranks of Gamma_n coinvariants},
			we find that $(\widehat{E}^+ (\mathfrak m_{\infty})^{\chi} \otimes \mathbb Q_p/\mathbb Z_p)^{\vee}$ is not
			a free $\Lambda$-module
			in the case when $\delta =2$,
			i.e. the case when $d \equiv 0$ (mod $4$) and $\chi = \mathbf 1$.
		Indeed,
			the $\mathbb Z_p$-rank of the $\Gamma _n$-coinvariant
			of a free $\Lambda$-module
			is divisible by $p^n$ for each $n$.
		On the other hand,
			the $\mathbb Z_p$-rank of the $\Gamma_n$-coinvariant
			of $(\widehat{E}^+ (\mathfrak m_{\infty})^{\chi} \otimes \mathbb Q_p/\mathbb Z_p)^{\vee}$
			is not divisible by $p^n$ as in the corollary.
		}
	\end{rem}

	\begin{prop}\label{key proposition}
		Let $\chi :\Delta \rightarrow \mathbb Z_p^{\times}$ be a character.
		There exist injective homomorphisms of $\Lambda$-modules
		\begin{eqnarray*}
			&&\left( \widehat{E}^+(\mathfrak m_{\infty})^{\chi}
				\otimes \mathbb Q_p/\mathbb Z_p
				\right)^{\vee}
			\longrightarrow \Lambda ^{\oplus d} \oplus (\Lambda /X)^{\oplus \delta}, \\
			&&\left( \widehat{E}^-(\mathfrak m_{\infty})^{\chi}
				\otimes \mathbb Q_p/\mathbb Z_p
				\right)^{\vee}
			\longrightarrow \Lambda ^{\oplus d}
		\end{eqnarray*}
		with finite cokernels.
	\end{prop}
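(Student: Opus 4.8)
The plan is to study the compact $\Lambda$-modules $M^{\pm} := (\widehat{E}^{\pm}(\mathfrak m_{\infty})^{\chi} \otimes \mathbb Q_p/\mathbb Z_p)^{\vee}$ through their $\Gamma_n$-coinvariants, in the spirit of Kurihara's suggestion of computing coinvariants and invoking Nakayama's lemma. The two inputs are Corollary \ref{Z_p-ranks of Gamma_n coinvariants}, which says that $(M^{\pm})_{\Gamma_n}$ is $\mathbb Z_p$-free of rank $dp^n + \delta$ (plus) and $dp^n$ (minus), and the elementary observation that the target modules have coinvariants of \emph{exactly} these ranks: since $\omega_n \in (X)$, multiplication by $\omega_n$ annihilates $\Lambda/X$, so $(\Lambda^{\oplus d} \oplus (\Lambda/X)^{\oplus\delta})_{\Gamma_n}$ is $\mathbb Z_p$-free of rank $dp^n + \delta$ and $(\Lambda^{\oplus d})_{\Gamma_n}$ of rank $dp^n$. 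First I would record that $M^{\pm}$ is finitely generated over $\Lambda$: the module $M^{\pm}$ is compact, and $M^{\pm}/\mathfrak m M^{\pm} \cong ((M^{\pm})_{\Gamma})/p$ is finite over $\mathbb F_p$ by the $n=0$ case of Corollary \ref{Z_p-ranks of Gamma_n coinvariants}, so topological Nakayama shows $M^{+}$ is generated by $d+\delta$ elements and $M^{-}$ by $d$ elements, and the coinvariant ranks force the $\Lambda$-rank to be $d$ in both cases.

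For the minus part the argument then closes at once, and in fact yields an isomorphism. The $d$ generators give a surjection $\Lambda^{\oplus d} \twoheadrightarrow M^{-}$; its kernel has $\Lambda$-rank $d-d=0$, hence is $\Lambda$-torsion, but it is a submodule of the torsion-free module $\Lambda^{\oplus d}$, so it vanishes. Thus $M^{-} \cong \Lambda^{\oplus d}$, which is a fortiori an injection with finite (indeed zero) cokernel.

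For the plus part the same count shows that $M^{+}$ needs $d+\delta$ generators while having $\Lambda$-rank only $d$, so a naive surjection from a free module has a kernel of positive rank $\delta$ and cannot be inverted so cheaply. Instead I would construct $\phi \colon M^{+} \to \Lambda^{\oplus d} \oplus (\Lambda/X)^{\oplus\delta}$ directly, assembling $d$ homomorphisms $M^{+} \to \Lambda$ that span the free quotient together with $\delta$ homomorphisms $M^{+} \to \Lambda/X \cong \mathbb Z_p$ that detect the $\Lambda$-torsion. Here the explicit description in Proposition \ref{pre-key proposition} is essential: it isolates the torsion direction as the summand $\Ann_{\mathbb Z_p[G_{-1}]}(\varphi + \varphi^{-1})$, which has $\mathbb Z_p$-rank $2$ exactly when $\delta = 2$ (Lemma \ref{on the annihilator of varphi + varphi^{-1}}) and is the source of the two $(\Lambda/X)$-factors. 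Having fixed $\phi$, I would prove injectivity by passing to coinvariants: since $\omega_n \in \mathfrak m^{n+1}$, the Krull intersection theorem gives $\bigcap_n \omega_n M^{+} = 0$, so $M^{+} \hookrightarrow \varprojlim_n (M^{+})_{\Gamma_n}$ (and likewise for the target), and it suffices to check that each $\phi_{\Gamma_n}$ is injective; as both sides are $\mathbb Z_p$-free of the same rank $dp^n + \delta$, injectivity of $\phi_{\Gamma_n}$ then automatically makes $\mathrm{coker}(\phi_{\Gamma_n})$ finite, so $\mathrm{coker}(\phi)$ is $\Lambda$-torsion with finite coinvariants.

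The main obstacle is the plus case with $\delta = 2$, i.e.\ $d \equiv 0 \pmod 4$ and $\chi = \mathbf 1$, on two counts. First, the torsion homomorphisms $M^{+} \to \Lambda/X$ must be chosen to genuinely capture the $\Ann_{\mathbb Z_p[G_{-1}]}(\varphi + \varphi^{-1})$-contribution coming from the factor $\varphi + \varphi^{-1}$ in the trace relation (Proposition \ref{trace condition}(2), Lemma \ref{on varphi + varphi^{-1} and d}); this is precisely the non-cyclicity phenomenon that keeps $M^{+}$ from being free and was absent in the previously known cases. Second, upgrading ``finite coinvariants of the cokernel'' to ``finite cokernel'' is not formal, since a module such as $\Lambda/p$ also has finite, but unbounded, coinvariants; the clean target is to show that $\mathrm{coker}(\phi)$ is pseudo-null, which over the two-dimensional regular local ring $\Lambda$ is equivalent to being finite. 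I would obtain this either by bounding $|\mathrm{coker}(\phi_{\Gamma_n})|$ uniformly in $n$, extracting the bound from the elementary-divisor data of Proposition \ref{pre-key proposition} (the exponents $p^{(m-n)/2}$ there stabilize in the direct limit computing the $p^{\infty}$-torsion), or by checking that $\phi$ becomes an isomorphism after localizing at each height-one prime. Once these two points are settled for $\phi$, the injectivity-with-finite-cokernel statement follows in all cases, the minus case being already complete.
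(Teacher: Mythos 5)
Your minus-case argument is complete, correct, and in fact sharper than what the proposition asks for: topological Nakayama (via the $n=0$ case of Corollary \ref{Z_p-ranks of Gamma_n coinvariants}) gives a surjection $\Lambda^{\oplus d} \twoheadrightarrow M^-$, the coinvariant growth forces $\rank_{\Lambda} M^- = d$, and the kernel, being a $\Lambda$-torsion submodule of a free module, vanishes; so $M^- \cong \Lambda^{\oplus d}$, an isomorphism the paper itself only establishes later (Theorem \ref{supplementary theorem}).

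The plus case, however, has a genuine gap, and it sits exactly where you flag the ``main obstacle.'' Your map $\phi$ is never constructed: the ``$d$ homomorphisms $M^{+} \to \Lambda$ spanning the free quotient together with $\delta$ homomorphisms detecting the torsion'' are postulated, not produced, and producing them is essentially equivalent to the statement being proved --- in particular the extension $0 \to M^{+}_{\Lambda\text{-tors}} \to M^{+} \to M^{+}/M^{+}_{\Lambda\text{-tors}} \to 0$ need not visibly split, and Proposition \ref{pre-key proposition} describes the $p$-power torsion of the \emph{coinvariants} $(\widehat{E}^{+}(\mathfrak m_{\infty})^{\chi})_{\Gamma_n}$, not a direct-sum decomposition of $M^{+}$ itself. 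Likewise, the finiteness of $\Coker(\phi)$ is left as two alternative strategies, neither carried out. The idea you are missing makes all of this unnecessary: since the $\Gamma_n$-coinvariants of $M^{+}$ are $\mathbb Z_p$-free (Corollary \ref{Z_p-ranks of Gamma_n coinvariants}), $M^{+}$ has no nontrivial finite $\Lambda$-submodule (Lemma \ref{equivalent conditions on freenes and triviality of finite lambda-submodules}(2)); hence the pseudo-isomorphism furnished by the structure theorem, $f \colon M^{+} \to \Lambda^{\oplus r} \oplus \bigoplus_{i} \Lambda/p^{m_i} \oplus \bigoplus_{j} \Lambda/f_j^{n_j}$, is \emph{automatically} injective with finite cokernel, because its kernel is a finite submodule of $M^{+}$. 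This is how the paper proceeds: no map has to be built by hand, and the only remaining work is to identify the elementary module --- which is done by precisely the coinvariant comparisons you already have. Finiteness of $Z = \Coker(f)$ forces $m_i = 0$ and $f_j \mid \omega_n$ for large $n$; the rank count $dp^n + \delta = rp^n + \sum_j n_j \deg f_j$ gives $r = d$ and $\sum_j n_j \deg f_j = \delta$; and when $\delta = 2$, a comparison of $\mathbb Z_p$-ranks at $n = 0$ (where $(M^{+})_{\Gamma_0}$ has rank $d+2$) eliminates the one spurious possibility $\Lambda/(X^2+3X+3)$ at $p = 3$, leaving $(\Lambda/X)^{\oplus 2}$ as the torsion part.
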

	
	\begin{proof}
		We prove the claim for $(\widehat{E}^+(\mathfrak m_{\infty})^{\chi}
			\otimes \mathbb Q_p/\mathbb Z_p)^{\vee}$.
		We can prove the rest of the claims similarly.
		
		We first note that
			$( \widehat{E}^+(\mathfrak m_{\infty})^{\chi}
			\otimes \mathbb Q_p/\mathbb Z_p ) ^{\vee}$
			has no nontrivial finite $\Lambda$-submodule
			since its $\Gamma_n$-coinvariants are free $\mathbb Z_p$-modules for all $n \geq 0$
			(see Corollary \ref{Z_p-ranks of Gamma_n coinvariants}).
		Thus by the structure theorem for $\Lambda$-modules,
			there exist irreducible distinguished polynomials $f_j$,
			nonnegative integers $r$, $s$, $t$, $m_i$, $n_j$,
			and an injective homomorphism
		\begin{eqnarray*}
			f:\left( \widehat{E}^+(\mathfrak m_{\infty})^{\chi}
			\otimes \mathbb Q_p/\mathbb Z_p \right)^{\vee}
			\longrightarrow
			\Lambda ^{\oplus r}
			\oplus \bigoplus_{i=1}^s \Lambda /p^{m_i}
			\oplus \bigoplus_{j=1}^t \Lambda /f_j^{n_j}=: \mathcal E
		\end{eqnarray*}
		with finite cokernel $Z$.
		
		We show that
		\begin{eqnarray*}
		\left\{
		\begin{array}{ll}
			r=d,\\[1mm]
			s=0\ (\text{in other words } m_i =0 \text{ for all }i),\\[1mm]
			t=\left\{
			\begin{array}{ll}
				0 & \text{if } d \nequiv 0 \text{ (mod }4) \text{ or } \chi \neq \mathbf 1,\\[1mm]
				2 & \text{otherwise},\ \text{and}
			\end{array}\right.\\[1mm]
			(f_1^{n_1}, \ldots ,f_t^{n_t})=  (X,X) \text{ if } t=2.
		\end{array}\right.
		\end{eqnarray*}
		
		From the exact sequence
			$0 \rightarrow \left( \widehat{E}^+(\mathfrak m_{\infty})^{\chi}
			\otimes \mathbb Q_p/\mathbb Z_p \right)^{\vee}
			\overset{f}{\rightarrow} \mathcal E
			\rightarrow Z
			\rightarrow 0$,
			we get the $\Gamma_n$-invariant-coinvariant exact sequences
		\begin{eqnarray}\label{invariant-coinvariant from the structure theorem}
			Z^{\Gamma _n}
			& \longrightarrow &
				\left( ( \widehat{E}^+(\mathfrak m_{\infty})^{\chi}
				\otimes \mathbb Q_p/\mathbb Z_p )^{\vee}
				\right)_{\Gamma _n}\nonumber \\[3mm]
			& \longrightarrow &
				\mathcal E/\omega _n \mathcal E
				\longrightarrow Z/\omega _n Z \longrightarrow 0
		\end{eqnarray}
		for all $n$.
		Note that, the first maps in
			(\ref{invariant-coinvariant from the structure theorem})
			are $0$-maps for all $n$,
			since $( ( \widehat{E}^+(\mathfrak m_{\infty})^{\chi}
			\otimes \mathbb Q_p/\mathbb Z_p )^{\vee}
			)_{\Gamma _n}$ is $\mathbb Z_p$-free.
		Then we see that $m_i=0$ and $f_j^{n_j}|\omega _n$ (and $n_j \leq 1$)
			for all sufficiently large $n$
			since $Z/\omega _n Z$ is bounded as $n \rightarrow \infty$.
		Thus we get $s=0$ here.
		We now have
		\begin{eqnarray}\label{comparing the Z_p-ranks}
			dp^n + \delta
			& = & \rank _{\mathbb Z_p}
				\left( (\widehat{E}^+ (\mathfrak m_{\infty})^{\chi} 
				\otimes \mathbb Q_p/\mathbb Z_p)^{\vee} \right) _{\Gamma _n}\nonumber \\
			& = & \rank _{\mathbb Z_p} ( \mathcal E/\omega _n \mathcal E )
				= rp^n + \sum _{j=1}^t n_j \deg f_j
		\end{eqnarray}
		for all sufficiently large $n$.
		Thus we get $r=d$.
		
		In the case when $d \nequiv 0$ (mod $4$) or $\chi \neq \mathbf 1$,
			we get $0 = \sum _{j=1}^t n_j \deg f_j$ from the above discussion.
		Thus we get $n_j=0$ which is the desired result, i.e. $t=0$.
		
		We finally consider the case when $d \equiv 0$ (mod $4$) and $\chi = \mathbf 1$.
		We may assume $n_j =1$ for all $j$.
		In this case, we have
		\begin{eqnarray}\label{condition 1}
			&&2 = \sum _{j=1}^t \deg f_j\\
			\label{condition 2}
			&&f_j | \omega _n (= (1+X)^{p^n}-1)
		\end{eqnarray}
		for all sufficiently large $n$.
		We narrow down the possible combinations of $(t,(f_1,\ldots ,f_t))$
			satisfying these two conditions (\ref{condition 1}) and (\ref{condition 2}).
		If $p \geq 5$, there is a unique combination
			$(t,(f_1 ,\ldots ,f_t)) = (2,(X,X))$,
			since $\deg f_j \leq 2$.
		If $p=3$, since $\omega _1=X(X^2+3X+3)$,
			there are two possible combinations
			$(t,(f_1 ,\ldots ,f_t))
			= (2,(X,X))$, $(1, (X^2+3X+3))$.
		By showing that the last combination is impossible, we complete the proof.
		Indeed, we have $\rank _{\mathbb Z_p} \mathcal E/\omega _0 \mathcal E=d+1$
			with the combination $(t,(f_1 ,\ldots ,f_t)) =(1, (X^2+3X+3))$.
		On the other hand, 
			from the exact sequence
			(\ref{invariant-coinvariant from the structure theorem})
			for $n=0$,
			we must have $\rank _{\mathbb Z_p}\mathcal E/\omega _0 \mathcal E = d+2$
			and thus we get the desired conclusion.
	\end{proof}
	
%
	We now get the following proposition
		which is an important ingredient
		for the proof of Proposition \ref{Lambda module structure of H^1/E^{pm}}.
	
	\begin{prop}\label{structure of the plus and the minus local conditions}
		Let $\chi : \Delta \rightarrow \mathbb Z_p^{\times}$ be a character.
		Then $(\widehat{E}^{\pm}(\mathfrak m_{\infty})^{\chi} \otimes \mathbb Q_p/\mathbb Z_p)^{\vee}$
			has no nontrivial finite $\Lambda$-submodule
			and its $\Lambda$-rank is $d$.
	\end{prop}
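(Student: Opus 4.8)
The plan is to deduce both assertions directly from the results already proved in this subsection; the proposition is essentially a repackaging of Proposition \ref{key proposition} together with Corollary \ref{Z_p-ranks of Gamma_n coinvariants}. Write $M = (\widehat{E}^{\pm}(\mathfrak m_{\infty})^{\chi} \otimes \mathbb Q_p/\mathbb Z_p)^{\vee}$. First I would record that $M$ is a finitely generated $\Lambda$-module: by Proposition \ref{key proposition} it embeds with finite cokernel into a finitely generated $\Lambda$-module, and a submodule of a finitely generated module over the Noetherian ring $\Lambda$ is again finitely generated.

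For the absence of nontrivial finite $\Lambda$-submodules, I would argue as follows. Let $M_{\mathrm{fin}} \subseteq M$ be the maximal finite $\Lambda$-submodule. By Corollary \ref{Z_p-ranks of Gamma_n coinvariants} the coinvariants $M_{\Gamma_n} = M/\omega_n M$ are $\mathbb Z_p$-free for every $n \geq 0$, so the image of the finite group $M_{\mathrm{fin}}$ in $M/\omega_n M$ is a finite subgroup of a torsion-free group and hence vanishes; that is, $M_{\mathrm{fin}} \subseteq \omega_n M$ for all $n$. Since $\omega_n = (1+X)^{p^n} - 1$ lies in $\mathfrak m^{n+1}$, where $\mathfrak m = (p,X)$ is the maximal ideal of $\Lambda$, this gives $M_{\mathrm{fin}} \subseteq \bigcap_{n \geq 0} \omega_n M \subseteq \bigcap_{n \geq 0} \mathfrak m^{n+1} M = 0$ by the Krull intersection theorem, whence $M_{\mathrm{fin}} = 0$. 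This is precisely the fact already invoked at the start of the proof of Proposition \ref{key proposition}.

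For the $\Lambda$-rank, I would simply apply Proposition \ref{key proposition}: there is an injection of $\Lambda$-modules $M \hookrightarrow \Lambda^{\oplus d} \oplus (\Lambda/X)^{\oplus \delta}$ in the plus case and $M \hookrightarrow \Lambda^{\oplus d}$ in the minus case, in each case with finite cokernel. Since $\Lambda/X \cong \mathbb Z_p$ is $\Lambda$-torsion, the target has $\Lambda$-rank $d$, and an injection with finite (hence torsion) cokernel preserves $\Lambda$-rank; therefore $\rank_{\Lambda} M = d$. The only step carrying content beyond citation is the elementary lemma that $\mathbb Z_p$-free coinvariants for all $n$ force the vanishing of $M_{\mathrm{fin}}$, and the one point I would check carefully is the containment $\omega_n \in \mathfrak m^{n+1}$ feeding the Krull intersection argument; everything else is immediate from the preceding propositions, so I do not expect a substantial obstacle here.
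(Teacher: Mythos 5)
Your proposal is correct and takes essentially the same route as the paper, whose entire proof is the citation ``This follows from Corollary \ref{Z_p-ranks of Gamma_n coinvariants} and Proposition \ref{key proposition}'': you invoke exactly these two results, and merely fill in the standard details the paper leaves implicit (the Krull intersection argument showing that $\mathbb Z_p$-free $\Gamma_n$-coinvariants for all $n$ force any finite submodule to vanish, and rank preservation under an injection with finite cokernel). Your one flagged concern is fine: indeed $\omega_n = X\prod_{m=1}^{n}\Phi_m(1+X)$ with each factor in the maximal ideal $(p,X)$, so $\omega_n \in (p,X)^{n+1}$ as needed.
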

	
	\begin{proof}
		This follows from Corollary \ref{Z_p-ranks of Gamma_n coinvariants}
			and Proposition \ref{key proposition}.
	\end{proof}

	In the rest of this subsection,
		we study the $\Lambda$-module
		$$\left( \frac{H^1(k_{\infty},E[p^{\infty}])}
		{\widehat{E}^{\pm}(\mathfrak m_{\infty}) \otimes \mathbb Q_p/\mathbb Z_p }\right)^{\vee}.$$
	
	We consider the following exact sequence;
	\begin{eqnarray}\label{exact sequence on H^1/E^{pm}}
		0 \rightarrow
				\left( \frac{H^1(k_{\infty}, E[p^{\infty}])}
				{\widehat{E}^{\pm} (\mathfrak m_{\infty}) \otimes \mathbb Q_p/\mathbb Z_p}\right) ^{\vee}
			& \longrightarrow &
				\left( H^1(k_{\infty}, E[p^{\infty}]) \right)^{\vee} \nonumber \\
			& \longrightarrow &
				\left( \widehat{E}^{\pm} (\mathfrak m_{\infty}) \otimes \mathbb Q_p/\mathbb Z_p\right) ^{\vee}
			\rightarrow 0.
	\end{eqnarray}
	We studied the $\Lambda$-module structure of the rightmost module.
	We also know the $\Lambda$-module structure of the middle module
		by the following fact (Proposition \ref{Greenberg's lemma 2}).
	
	\begin{prop}[Greenberg \cite{Gre89} \S 3 Corollary 2]\label{Greenberg's lemma 2}
		Let $K$ be a finite extension of $\mathbb Q_p$ and
			$K_{\infty}$ a $\mathbb Z_p$-extension of $K$.
		Put $\Lambda _K= \mathbb Z_p[[\Gal (K_{\infty}/K)]]$.
		If $E(K_{\infty})[p^{\infty}]=0$,
			then $H^1(K_{\infty}, E[p^{\infty}])^{\vee}$
			is a free $\Lambda_K$-module
			and its $\Lambda_K$-rank is $2[K:\mathbb Q_p]$;
		\begin{eqnarray*}
			H^1(K_{\infty},E[p^{\infty}])^{\vee} \cong \Lambda_K ^{\oplus 2[K:\mathbb Q_p]}.
		\end{eqnarray*}
	\end{prop}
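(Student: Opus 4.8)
The plan is to reduce the global freeness statement to a layer-by-layer computation of $\mathbb{Z}_p$-ranks, and then to promote $\mathbb{Z}_p$-freeness of all finite layers to $\Lambda_K$-freeness by a Nakayama argument. Write $X = H^1(K_{\infty}, E[p^{\infty}])^{\vee}$, fix a topological generator of $\Gal(K_{\infty}/K)$ to identify $\Lambda_K$ with $\mathbb{Z}_p[[T]]$, let $K_n$ be the $n$-th layer, and set $\omega_n = (1+T)^{p^n}-1$, the generator of the augmentation ideal of $\mathbb{Z}_p[[\Gal(K_{\infty}/K_n)]]$ inside $\Lambda_K$, so that $X/\omega_n X$ is the $\Gal(K_{\infty}/K_n)$-coinvariant module. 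First I would apply the inflation--restriction sequence for $\Gal(K_{\infty}/K_n)$ acting on $H^1(K_{\infty}, E[p^{\infty}])$. Since $\Gal(K_{\infty}/K_n) \cong \mathbb{Z}_p$ has cohomological dimension $1$ and the $\Gal(\overline{K}/K_{\infty})$-invariants $E(K_{\infty})[p^{\infty}]$ vanish by hypothesis, both outer terms of that sequence vanish, and restriction gives $H^1(K_n, E[p^{\infty}]) \cong H^1(K_{\infty}, E[p^{\infty}])^{\Gal(K_{\infty}/K_n)}$. Dualizing and using that Pontryagin duality interchanges invariants and coinvariants, this yields $X/\omega_n X \cong H^1(K_n, E[p^{\infty}])^{\vee}$ for every $n \ge 0$.

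Next I would compute the local cohomology of $K_n$ with coefficients in $E[p^{\infty}]$. The hypothesis forces $H^0(K_n, E[p^{\infty}]) = E(K_n)[p^{\infty}] = 0$, while the Weil pairing self-duality $E[p^m] \cong \Hom(E[p^m], \mu_{p^m})$ together with local Tate duality identifies $H^2(K_n, E[p^m])$ with $(E(K_n)[p^m])^{\vee} = 0$; passing to the limit gives $H^2(K_n, E[p^{\infty}]) = 0$ and hence the $p$-divisibility of $H^1(K_n, E[p^{\infty}])$. The local Euler characteristic formula applied to the finite modules $E[p^m]$, with $H^0 = H^2 = 0$, gives $\# H^1(K_n, E[p^m]) = p^{2m[K_n:\mathbb{Q}_p]}$, so $H^1(K_n, E[p^{\infty}]) \cong (\mathbb{Q}_p/\mathbb{Z}_p)^{2[K_n:\mathbb{Q}_p]}$. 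Therefore each $X/\omega_n X$ is a free $\mathbb{Z}_p$-module of rank $2[K_n:\mathbb{Q}_p] = 2p^n[K:\mathbb{Q}_p]$. In particular, writing $\mathfrak{m}$ for the maximal ideal of $\Lambda_K$, the quotient $X/\mathfrak{m}X = (X/\omega_0 X)/p$ has $\mathbb{F}_p$-dimension $r := 2[K:\mathbb{Q}_p]$, so by topological Nakayama $X$ is generated over $\Lambda_K$ by $r$ elements.

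Finally I would deduce freeness. Lifting an $\mathbb{F}_p$-basis of $X/\mathfrak{m}X$ produces a surjection $\phi : \Lambda_K^{\oplus r} \twoheadrightarrow X$. Reducing modulo $\omega_n$ gives a surjection $(\Lambda_K/\omega_n)^{\oplus r} \twoheadrightarrow X/\omega_n X$ between free $\mathbb{Z}_p$-modules of the same finite rank $r p^n$, using $\rank_{\mathbb{Z}_p}(\Lambda_K/\omega_n) = p^n$ and the rank computed above; a surjection of free $\mathbb{Z}_p$-modules of equal finite rank is an isomorphism, so $\phi \bmod \omega_n$ is injective for all $n$. If $x \in \Ker \phi$, then $x \bmod \omega_n = 0$ for every $n$, i.e. $x \in \bigcap_n \omega_n \Lambda_K^{\oplus r}$, which is $0$ since $\deg \omega_n = p^n \to \infty$ forces this intersection to vanish by Weierstrass preparation. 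Hence $\phi$ is injective and $X \cong \Lambda_K^{\oplus r} = \Lambda_K^{\oplus 2[K:\mathbb{Q}_p]}$.

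The middle computation is routine given local duality and the Euler characteristic formula, and the crucial use of the hypothesis $E(K_{\infty})[p^{\infty}] = 0$ is entirely contained there. The step I expect to require the most care, and the genuine conceptual point, is the last one: converting $\mathbb{Z}_p$-freeness of all the coinvariants $X/\omega_n X$ into $\Lambda_K$-freeness of $X$. It is essential that the coinvariants are $\mathbb{Z}_p$-free at \emph{every} layer and not merely at $n=0$, since this is exactly what excludes hidden $\Lambda_K$-torsion in $X$ and allows the rank count to upgrade the Nakayama surjection $\phi$ to an isomorphism.
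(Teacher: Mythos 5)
Your proof is correct, and it is worth noting that the paper does not prove this proposition at all: it is quoted from Greenberg \cite{Gre89} \S 3, Corollary 2 and used as a black box, so your argument supplies a self-contained proof rather than paralleling one. All three of your steps check out (writing $X=H^1(K_{\infty},E[p^{\infty}])^{\vee}$ as you do): inflation--restriction with $E(K_{\infty})[p^{\infty}]=0$ identifies $X/\omega_n X$ with $H^1(K_n,E[p^{\infty}])^{\vee}$ (your appeal to cohomological dimension $1$ is superfluous here, since the two outer terms vanish simply because the coefficient module is zero); local Tate duality combined with the Weil pairing kills $H^0$ and $H^2$ at each finite level, and the Euler characteristic formula then gives $H^1(K_n,E[p^{\infty}])\cong(\mathbb Q_p/\mathbb Z_p)^{2[K_n:\mathbb Q_p]}$ --- the only wording slip is that $p$-divisibility of $H^1(K_n,E[p^{\infty}])$ follows from $H^2(K_n,E[p])=0$ rather than from $H^2(K_n,E[p^{\infty}])=0$, and the passage from the counts $\#H^1(K_n,E[p^m])=p^{2m[K_n:\mathbb Q_p]}$ to the corank uses $H^1(K_n,E[p^m])\cong H^1(K_n,E[p^{\infty}])[p^m]$, which needs $H^0(K_n,E[p^{\infty}])=0$; both facts are available in your setup --- and the Nakayama-plus-rank-count endgame, with $\Ker\phi$ trapped in $\bigcap_n\omega_n\Lambda_K^{\oplus r}=0$, is valid. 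For comparison, the paper's own Lemma \ref{equivalent conditions on freenes and triviality of finite lambda-submodules} (a finitely generated $\Lambda$-module $M$ is $\Lambda$-free if and only if $M^{\Gamma}=0$ and $M_{\Gamma}$ is $\mathbb Z_p$-free) permits a shorter endgame that needs only the layer $n=0$: the degree-two edge sequence of Hochschild--Serre embeds $H^1(\Gamma,H^1(K_{\infty},E[p^{\infty}]))\cong H^1(K_{\infty},E[p^{\infty}])_{\Gamma}$ into $H^2(K,E[p^{\infty}])$, which vanishes because each $H^2(K,E[p^m])\cong(E(K)[p^m])^{\vee}=0$ under the hypothesis; dually $X^{\Gamma}=0$, and combining this with $X_{\Gamma}\cong\mathbb Z_p^{2[K:\mathbb Q_p]}$ from your Euler characteristic computation at $n=0$, the lemma yields freeness and the rank simultaneously. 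That route is presumably closer to Greenberg's original argument; your tower argument pays for the computation at every layer but stays entirely within the five-term exact sequence and elementary $\Lambda$-module theory, which is a perfectly reasonable trade.
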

	
	We can apply Proposition \ref{Greenberg's lemma 2}
		in our setting as $K=k_0$, $K_{\infty}=k_{\infty}$.
	Indeed we see that $E(k_{\infty})[p^{\infty}]=0$
		by Proposition \ref{E^ is torsion-free}.

%
	
	Here we recall the following useful lemma on equivalent conditions
		on freeness of $\Lambda$-modules and on triviality of finite $\Lambda$-submodules.
	\begin{lemma}\label{equivalent conditions on freenes and triviality of finite lambda-submodules}
		Let $M$ be a finitely generated $\Lambda$-module.
		
		(1) $M$ is a free $\Lambda$-module
			if and only if $M^{\Gamma}=0$ and $M_{\Gamma}$ is a free $\mathbb Z_p$-module.
			
		(2) $M$ has no nontrivial finite $\Lambda$-submodule
			if and only if $M^{\Gamma}$ is a free $\mathbb Z_p$-module.
	\end{lemma}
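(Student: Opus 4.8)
The plan is to exploit the identifications $M^{\Gamma} = M[X]$ and $M_{\Gamma} = M/XM$, where $X = \gamma - 1$, together with the fact that $\Lambda \cong \mathbb Z_p[[X]]$ is a Noetherian, complete, regular local domain with maximal ideal $\mathfrak m = (p,X)$, so that both the topological Nakayama lemma and the Krull intersection theorem are available. Two elementary inputs will be used repeatedly. First, the snake lemma applied to multiplication by $X$ on a short exact sequence $0 \to N \to P \to M \to 0$ yields a six-term exact sequence relating the $X$-torsion submodules and the $X$-coinvariants. Second, a nonzero finite $\Lambda$-module $F$ necessarily has $F[X] \neq 0$: for $F \neq 0$ Nakayama gives $XF \neq F$, and since multiplication by $X$ on the finite abelian group $F$ has kernel and cokernel of equal order, $F[X] \neq 0$ (indeed $|F[X]| = |F/XF|$).

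For part (1), the forward implication is immediate: if $M \cong \Lambda^{r}$ then $M[X] = 0$ because $X$ is a nonzerodivisor, and $M/XM \cong (\Lambda/X)^{r} = \mathbb Z_p^{r}$ is $\mathbb Z_p$-free. For the converse I would set $r = \rank_{\mathbb Z_p}(M/XM)$ and lift a $\mathbb Z_p$-basis of $M/XM$ to elements of $M$; since $M/\mathfrak m M = (M/XM)/p(M/XM)$ has $\mathbb F_p$-dimension $r$, topological Nakayama produces a surjection $\pi \colon \Lambda^{r} \to M$. Writing $N = \ker \pi$ and feeding $0 \to N \to \Lambda^{r} \to M \to 0$ into the multiplication-by-$X$ snake sequence, the hypothesis $M[X] = 0$ together with $\Lambda^{r}[X] = 0$ collapses the sequence to $0 \to N/XN \to (\Lambda/X)^{r} \to M/XM \to 0$. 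By construction the right-hand map carries the standard basis to a $\mathbb Z_p$-basis, hence is an isomorphism, so $N/XN = 0$; then $N = XN = X^{k}N$ for all $k$, and Krull intersection forces $N \subseteq \bigcap_k X^{k}\Lambda^{r} = 0$. Thus $\pi$ is an isomorphism and $M$ is free.

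For part (2), suppose first that $M[X]$ is $\mathbb Z_p$-free and let $F \subseteq M$ be a finite $\Lambda$-submodule. Then $F[X]$ is a finite $\Lambda$-submodule of the torsion-free module $M[X]$, hence $F[X] = 0$, and the second elementary input forces $F = 0$. Conversely, suppose $M$ has no nontrivial finite $\Lambda$-submodule. Since $\Lambda$ is Noetherian, $M[X] = M^{\Gamma}$ is a finitely generated $\Lambda$-module annihilated by $X$, hence finitely generated over $\Lambda/X\Lambda = \mathbb Z_p$; its $\mathbb Z_p$-torsion submodule is preserved by the $\Lambda$-action and is finite, so it is a finite $\Lambda$-submodule of $M$ and therefore vanishes by hypothesis. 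Hence $M[X]$ is $\mathbb Z_p$-free.

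The argument is essentially formal once the two elementary inputs are in place, and I do not expect a deep obstacle, this being a standard structural lemma over the Iwasawa algebra. The only point requiring genuine care is the converse in part (1): one must arrange the generators so that the induced map on $X$-coinvariants is \emph{exactly} the chosen isomorphism $(\Lambda/X)^{r} \xrightarrow{\sim} M/XM$ rather than merely a surjection, and then invoke Nakayama together with Krull intersection correctly to pass from $N/XN = 0$ to $N = 0$.
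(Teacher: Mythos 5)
Your proof is correct, but note that the paper does not actually prove this lemma at all: its entire ``proof'' is the citation to \cite{NSW}, Proposition 5.3.19, so your self-contained argument is necessarily a different route. Checking it: in (1), the snake-lemma sequence for multiplication by $X$ on $0 \to N \to \Lambda^r \to M \to 0$ collapses correctly under the hypothesis $M[X]=0$, the induced map $(\Lambda/X)^r \to M/XM$ is an isomorphism precisely because you lifted an actual $\mathbb Z_p$-basis (the point of care you flag), and the passage from $N/XN=0$ to $N=0$ is valid --- though here Krull intersection is slightly heavier than needed, since $N=XN\subseteq \mathfrak m N$ together with Nakayama for the finitely generated module $N$ over the Noetherian local ring $\Lambda$ already gives $N=0$. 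In (2), both directions are sound: your counting argument $|F[X]|=|F/XF|$ combined with Nakayama correctly shows a nonzero finite $\Lambda$-module has nonzero $X$-torsion, and in the converse the $\mathbb Z_p$-torsion of $M[X]$ is indeed a finite $\Lambda$-stable submodule of $M$ (the $\Lambda$-action on $M[X]$ factors through $\Lambda/X\cong\mathbb Z_p$), so it vanishes by hypothesis and $M[X]$ is free over the PID $\mathbb Z_p$. As for what each approach buys: the paper's citation is economical and leans on the standard homological treatment of finitely generated modules over the two-dimensional regular local ring $\Lambda$ found in Neukirch--Schmidt--Wingberg, whereas your argument isolates the elementary inputs actually needed (topological Nakayama over the complete local ring, $X$ a nonzerodivisor with $\Lambda/X\cong\mathbb Z_p$, the snake lemma) and would transfer verbatim to $\mathcal O[[X]]$ for any complete discrete valuation ring $\mathcal O$ with finite residue field.
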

	
	\begin{proof}
		See for example \cite{NSW} Proposition 5.3.19.
	\end{proof}
	
	Applying the following lemma to the exact sequence (\ref{exact sequence on H^1/E^{pm}}),
	we can now determine the $\Lambda$-module structure of
		$( H^1( k_{\infty}, E[p^{\infty}] )
		/ (\widehat{E}^{\pm}(k_{\infty}) \otimes \mathbb Q_p/\mathbb Z_p))^{\vee}$.
	
	\begin{lemma}\label{on freenes of the kernel}
		Let $f : M \rightarrow N$
			be a surjective homomorphism of $\Lambda$-modules.
		Suppose that $M$ is a free $\Lambda$-module of $\Lambda$-rank $r$,
			and that $N$ is $\Lambda$-module of $\Lambda$-rank $s$
			which has no non-trivial finite $\Lambda$-submodule.
		Then its kernel $\Ker f$ is a free $\Lambda$-module of rank $r-s$.
	\end{lemma}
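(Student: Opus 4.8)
The plan is to form the short exact sequence $0 \to \Ker f \to M \to N \to 0$ and apply the snake lemma to multiplication by $X = \gamma - 1$ (recall $\gamma$ is identified with $1+X$), so that the kernels of the vertical maps are the $\Gamma$-invariants and the cokernels are the $\Gamma$-coinvariants. The resulting six-term exact sequence
$$0 \to (\Ker f)^{\Gamma} \to M^{\Gamma} \to N^{\Gamma} \to (\Ker f)_{\Gamma} \to M_{\Gamma} \to N_{\Gamma} \to 0$$
lets me verify the hypotheses of Lemma \ref{equivalent conditions on freenes and triviality of finite lambda-submodules}~(1) for $\Ker f$, namely that $(\Ker f)^{\Gamma}=0$ and that $(\Ker f)_{\Gamma}$ is a free $\mathbb Z_p$-module.

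First I would record the two inputs on the outer modules. Since $M$ is free, it is $\Lambda$-torsion-free, hence $X$-torsion-free, so $M^{\Gamma}=0$; and $M_{\Gamma}=M/XM \cong (\Lambda/X)^{\oplus r} \cong \mathbb Z_p^{\oplus r}$ is $\mathbb Z_p$-free. Since $N$ has no nontrivial finite $\Lambda$-submodule, Lemma \ref{equivalent conditions on freenes and triviality of finite lambda-submodules}~(2) gives that $N^{\Gamma}$ is $\mathbb Z_p$-free.

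From $M^{\Gamma}=0$ the six-term sequence immediately yields $(\Ker f)^{\Gamma}=0$ (which in any case is clear, as $\Ker f \subseteq M$ is $X$-torsion-free), together with a short exact sequence $0 \to N^{\Gamma} \to (\Ker f)_{\Gamma} \to I \to 0$, where $I := \Im((\Ker f)_{\Gamma} \to M_{\Gamma})$. Being a submodule of the finitely generated $\mathbb Z_p$-free module $M_{\Gamma}$, the module $I$ is $\mathbb Z_p$-free, hence projective over $\mathbb Z_p$, so this sequence splits and $(\Ker f)_{\Gamma} \cong N^{\Gamma} \oplus I$ is $\mathbb Z_p$-free. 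By Lemma \ref{equivalent conditions on freenes and triviality of finite lambda-submodules}~(1), $\Ker f$ is therefore a free $\Lambda$-module.

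Finally, to pin down the rank I would tensor the original sequence with the fraction field of the domain $\Lambda$, which is flat, so that $\Lambda$-ranks are additive and $\rank_{\Lambda}(\Ker f)=r-s$, giving $\Ker f \cong \Lambda^{\oplus (r-s)}$. The only place that genuinely uses the hypotheses rather than formal manipulation is the middle step: the $\mathbb Z_p$-freeness of $(\Ker f)_{\Gamma}$ rests on $N^{\Gamma}$ being $\mathbb Z_p$-free, which is exactly where the assumption that $N$ has no nontrivial finite $\Lambda$-submodule enters. I expect no serious obstacle beyond carefully tracking which invariants and coinvariants are $\mathbb Z_p$-free throughout the snake sequence.
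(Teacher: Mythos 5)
Your proof is correct and follows essentially the same route as the paper: both form the invariant--coinvariant (snake) exact sequence for $0 \to \Ker f \to M \to N \to 0$ and then invoke Lemma \ref{equivalent conditions on freenes and triviality of finite lambda-submodules} to conclude that $(\Ker f)^{\Gamma}=0$ and $(\Ker f)_{\Gamma}$ is $\mathbb Z_p$-free, hence $\Ker f$ is $\Lambda$-free. Your explicit splitting argument $0 \to N^{\Gamma} \to (\Ker f)_{\Gamma} \to I \to 0$ merely spells out the step the paper compresses into ``hence,'' and your rank computation via the fraction field matches the paper's unelaborated final claim.
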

	
	\begin{proof}
		We put $M_0 := \Ker f$.
		Then by taking the invariant-coinvariant exact sequence,
			we have
		$$
			0 \longrightarrow M_0^\Gamma \longrightarrow M^\Gamma \longrightarrow N^\Gamma
			\longrightarrow M_{0,\Gamma} \longrightarrow M_\Gamma .
		$$
		Since $M$ is a free $\Lambda$-module,
			we have $M^\Gamma = 0$
			and $M_\Gamma$ is a free $\mathbb Z_p$-module
			by Lemma \ref{equivalent conditions on freenes and triviality of finite lambda-submodules}.
		Since $N$ has no non-trivial finite $\Lambda$-submodule,
			we see that $N^\Gamma$ is a free $\mathbb Z_p$-module
			by Lemma \ref{equivalent conditions on freenes and triviality of finite lambda-submodules}.
		Hence we have $M_0^\Gamma = 0$
			and $M_{0, \Gamma}$ is a free $\mathbb Z_p$-module.
		Thus $M_0$ is a free $\Lambda$-module
			again by Lemma \ref{equivalent conditions on freenes and triviality of finite lambda-submodules}.
		It is easy to see that the $\Lambda$-rank of $M_0$ is $r-s$.
	\end{proof}

	\begin{prop}\label{Lambda module structure of H^1/E^{pm}}
		We have
		\begin{eqnarray*}
			\left( \frac{H^1(k_{\infty},E[p^{\infty}])}{E^{\pm}(k_{\infty})
			\otimes \mathbb Q_p/\mathbb Z_p} \right) ^{\vee}
			\cong \Lambda ^{\oplus [k_0:\mathbb Q_p]}.
		\end{eqnarray*}
	\end{prop}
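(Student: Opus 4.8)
The plan is to read off the $\Lambda$-module structure of the quotient directly from the exact sequence (\ref{exact sequence on H^1/E^{pm}}) by feeding the structural results already established for its two outer terms into Lemma \ref{on freenes of the kernel}. First I would record the two inputs. For the middle term, Proposition \ref{Greenberg's lemma 2} applies with $K=k_0$ and $K_\infty=k_\infty$, since $E(k_\infty)[p^\infty]=0$ by Proposition \ref{E^ is torsion-free}; this shows $H^1(k_\infty,E[p^\infty])^\vee$ is a free $\Lambda$-module of $\Lambda$-rank $2[k_0:\mathbb Q_p]$. For the rightmost term, I would first replace $E^\pm(k_\infty)\otimes\mathbb Q_p/\mathbb Z_p$ by $\widehat E^\pm(\mathfrak m_\infty)\otimes\mathbb Q_p/\mathbb Z_p$ using the isomorphism of Lemma \ref{comparison between Epm and E^pm}, so that the local results of Subsection 3.3 become available.

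Next I would control the rightmost term through the character decomposition $M=\bigoplus_\chi M^\chi$, which is a decomposition of $\Lambda$-modules since $|\Delta|=p-1$ is prime to $p$ and the $\Delta$- and $\Gamma$-actions commute inside the abelian group $G_\infty$. Applying Proposition \ref{structure of the plus and the minus local conditions} to each of the $p-1$ characters $\chi$, every component $(\widehat E^\pm(\mathfrak m_\infty)^\chi\otimes\mathbb Q_p/\mathbb Z_p)^\vee$ has $\Lambda$-rank $d$ and no nontrivial finite $\Lambda$-submodule. Taking the direct sum and using $[k_0:\mathbb Q_p]=d(p-1)$, I conclude that $(\widehat E^\pm(\mathfrak m_\infty)\otimes\mathbb Q_p/\mathbb Z_p)^\vee$ has $\Lambda$-rank $[k_0:\mathbb Q_p]$ and still has no nontrivial finite $\Lambda$-submodule. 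The last property is stable under the finite direct sum: by Lemma \ref{equivalent conditions on freenes and triviality of finite lambda-submodules}(2) it amounts to $\mathbb Z_p$-freeness of $M^\Gamma=\bigoplus_\chi (M^\chi)^\Gamma$, a direct sum of $\mathbb Z_p$-free modules.

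Finally I would apply Lemma \ref{on freenes of the kernel} to the surjection $H^1(k_\infty,E[p^\infty])^\vee\longrightarrow(\widehat E^\pm(\mathfrak m_\infty)\otimes\mathbb Q_p/\mathbb Z_p)^\vee$ from (\ref{exact sequence on H^1/E^{pm}}). The source is free of $\Lambda$-rank $r=2[k_0:\mathbb Q_p]$, the target has $\Lambda$-rank $s=[k_0:\mathbb Q_p]$ and no nontrivial finite $\Lambda$-submodule, so the kernel, which is precisely the dual of the quotient in question, is a free $\Lambda$-module of rank $r-s=[k_0:\mathbb Q_p]$, as claimed.

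Since every ingredient (Greenberg's freeness, the structure of the local conditions, and the freeness-of-the-kernel lemma) is already in place, there is no genuine analytic obstacle here; the work is entirely bookkeeping. The only points requiring care are the rank accounting $[k_0:\mathbb Q_p]=d(p-1)$ and the verification that the two hypotheses of Lemma \ref{on freenes of the kernel}, namely freeness of the source and triviality of finite $\Lambda$-submodules of the target, are genuinely met, both of which follow from the cited results.
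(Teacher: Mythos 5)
Your proposal is correct and follows exactly the paper's own route: the paper's proof is precisely the combination of Proposition \ref{structure of the plus and the minus local conditions}, Proposition \ref{Greenberg's lemma 2}, and Lemma \ref{on freenes of the kernel} applied to the exact sequence (\ref{exact sequence on H^1/E^{pm}}). The details you supply --- invoking Lemma \ref{comparison between Epm and E^pm}, summing the per-character statement over the $p-1$ characters of $\Delta$ to get $\Lambda$-rank $d(p-1)=[k_0:\mathbb Q_p]$, and noting that triviality of finite $\Lambda$-submodules passes to the direct sum --- are exactly the bookkeeping the paper leaves implicit.
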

	
	\begin{proof}
		It follows from Proposition \ref{structure of the plus and the minus local conditions},
			Proposition \ref{Greenberg's lemma 2},
			and Lemma \ref{on freenes of the kernel}
			for the exact sequence (\ref{exact sequence on H^1/E^{pm}}).
	\end{proof}

\subsection{More on the plus and the minus local conditions}
	
	The discussion in the previous subsections is enough to prove our main theorem.
	In this subsection,
		we proceed to determine the explicit structure of the $\Lambda$-module
		$\left( \widehat{E}^{\pm}(\mathfrak m_{\infty})^{\chi}
		\otimes \mathbb Q_p/\mathbb Z_p \right) ^{\vee}$.
	For that purpose, we show the following lemma.

	\begin{lemma}\label{supplementary lemma}
		Let $f:M \rightarrow N$ be an injective homomorphism of $\Lambda$-modules
			with finite cokernel.
		Suppose that $M/\omega _n M$ is $\mathbb Z_p$-free
			and $N_{\Lambda \text{-tors}} = \{ x \in N | \omega _n x =0\}$
			for all sufficiently large $n$.
		Then $f$ induces an isomorphism
		\begin{eqnarray*}
			\overline{f} :M/M_{\Lambda \text{-tors}} \overset{\simeq}{\longrightarrow}
			N/N_{\Lambda \text{-tors}}
		\end{eqnarray*}
		for all sufficiently large $n$.
	\end{lemma}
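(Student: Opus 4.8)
The plan is to prove the two things an isomorphism demands: that $\overline f$ is well defined and injective (this part is formal and needs no hypothesis beyond injectivity of $f$), and that it is surjective (this is where the conditions on $\omega_n$ enter). Throughout I write $C:=\Coker f$, so that I have a short exact sequence $0\to M\xrightarrow{f}N\xrightarrow{g}C\to 0$ with $C$ finite.

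First I would dispose of the formal part. Since $f$ is $\Lambda$-linear it carries $M_{\Lambda\text{-tors}}$ into $N_{\Lambda\text{-tors}}$, so $\overline f$ is well defined; and if $f(m)\in N_{\Lambda\text{-tors}}$, then $\lambda f(m)=f(\lambda m)=0$ for some nonzero $\lambda\in\Lambda$, whence $\lambda m=0$ by injectivity of $f$ and therefore $m\in M_{\Lambda\text{-tors}}$. Thus $f^{-1}(N_{\Lambda\text{-tors}})=M_{\Lambda\text{-tors}}$ and $\overline f$ is injective. Its cokernel is $N/(f(M)+N_{\Lambda\text{-tors}})$, a quotient of $C=N/f(M)$, hence finite; so the whole problem reduces to showing that this cokernel vanishes, i.e. that $N=f(M)+N_{\Lambda\text{-tors}}$.

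The key step is a snake-lemma computation. Multiplying the short exact sequence by $\omega_n$ yields the exact sequence
\[
0\to M[\omega_n]\to N[\omega_n]\xrightarrow{\ \beta\ }C[\omega_n]\xrightarrow{\ \partial\ }M/\omega_n M\to N/\omega_n N\to C/\omega_n C\to 0,
\]
where $[\omega_n]$ denotes $\omega_n$-torsion. For all sufficiently large $n$ the continuous $\Gamma$-action on the finite module $C$ factors through $\Gamma/\Gamma_n$, so $\omega_n=(1+X)^{p^n}-1$ annihilates $C$ and hence $C[\omega_n]=C/\omega_n C=C$. The crucial observation is that the connecting map $\partial\colon C\to M/\omega_n M$ is then a homomorphism of abelian groups from a \emph{finite} group into a $\mathbb Z_p$-\emph{free} module (here I use the hypothesis that $M/\omega_n M$ is $\mathbb Z_p$-free), so $\partial=0$. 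Exactness at $C[\omega_n]$ then forces $\beta$ to be surjective; since $\beta$ is induced by $g$, this says $g(N[\omega_n])=C=g(N)$, i.e. $N=N[\omega_n]+f(M)$. As $N[\omega_n]\subseteq N_{\Lambda\text{-tors}}$ always holds, we conclude $N=N_{\Lambda\text{-tors}}+f(M)$, which is exactly the surjectivity of $\overline f$. Together with injectivity this gives the isomorphism.

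I expect the only delicate point to be the two-part justification that $\omega_n$ kills $C$ for large $n$ (continuity of the $\Gamma$-action on a finite module) and the ensuing vanishing $\partial=0$; once the snake sequence is in place, everything else is formal. I note that the stated conclusion actually uses only the $\mathbb Z_p$-freeness of $M/\omega_n M$ together with the automatic inclusion $N[\omega_n]\subseteq N_{\Lambda\text{-tors}}$; the sharper hypothesis $N_{\Lambda\text{-tors}}=N[\omega_n]$ serves to make the left-hand terms $M[\omega_n]\hookrightarrow N[\omega_n]=N_{\Lambda\text{-tors}}$ of the snake sequence precisely the torsion submodules, which is what one needs when, in addition, one wants to pin down $M_{\Lambda\text{-tors}}$ in the application to Theorem~\ref{supplementary theorem}. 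An alternative to the $\partial=0$ argument is to apply Lemma~\ref{equivalent conditions on freenes and triviality of finite lambda-submodules} after feeding $\Coker\overline f$ into the snake sequence of $0\to M/M_{\Lambda\text{-tors}}\to N/N_{\Lambda\text{-tors}}\to \Coker\overline f\to 0$, but the direct vanishing of the connecting map is the shortest route.
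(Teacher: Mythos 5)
Your proof is correct, but it is organized differently from the paper's. The paper regards $M$ as a submodule of $N$, notes (as you do) that finiteness of $N/M$ forces $\omega_n N \subset M$ for all large $n$, and then proves surjectivity of $\overline f$ in one stroke: multiplication by $\omega_n$ induces an isomorphism $\Coker(\overline f)=N/(M+N_{\Lambda\text{-tors}})\simeq \omega_n N/\omega_n M$, which sits inside $M/\omega_n M$; being a finite subgroup of a $\mathbb Z_p$-free module, it vanishes. You instead run the multiplication-by-$\omega_n$ map through the snake lemma for $0\to M\to N\to C\to 0$ and kill the connecting map $\partial\colon C[\omega_n]\to M/\omega_n M$, deducing $N=f(M)+N[\omega_n]$. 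Both arguments rest on exactly the same two pillars ($\omega_n$ annihilates the finite cokernel for $n$ large; a finite subgroup of the $\mathbb Z_p$-free module $M/\omega_n M$ is trivial), so the distance between them is not great, but your packaging does buy something real: you only ever use the automatic inclusion $N[\omega_n]\subseteq N_{\Lambda\text{-tors}}$, so your argument establishes the conclusion without the hypothesis $N_{\Lambda\text{-tors}}=\{x\in N \mid \omega_n x=0\}$ at all. The paper, by contrast, genuinely uses that hypothesis: to know that its multiplication-by-$\omega_n$ map is well defined on $N/(M+N_{\Lambda\text{-tors}})$ one needs $\omega_n N_{\Lambda\text{-tors}}\subseteq \omega_n M$, which it gets from $\omega_n N_{\Lambda\text{-tors}}=0$. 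In exchange, the paper's proof is a three-line computation with no homological input. Your closing speculation about why the extra hypothesis appears is slightly off as a description of the paper (in the proof of Theorem \ref{supplementary theorem} the identification $M_{\Lambda\text{-tors}}\cong(\Lambda/X)^{\oplus\delta}$ is obtained from finiteness of $\Coker(f_0)$, not from this lemma), but that remark is peripheral and does not affect the correctness of your proof.
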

	
	\begin{proof}
		We regard $M$ as a $\Lambda$-submodule of $N$ by $f$.
		Since $N/M$ is finite, we see that
			$\omega _n N \subset M$ for all sufficiently large $n$.
		We thus have
		\begin{eqnarray*}
			\Coker (\overline{f})
			&=& N/(M+N_{\Lambda \text{-tors}}) \\
			&\overset{\substack{\times \omega _n \\ \simeq}}{\longrightarrow}&
				\omega _n N/\omega _n M \\
			& \hookrightarrow & M/\omega _n M
		\end{eqnarray*}
		for all sufficiently large $n$.
		Since $M/\omega _n M$ is $\mathbb Z_p$-free for all sufficiently large $n$,
			we get $M/M_{\Lambda \text{-tors}} \cong N/N_{\Lambda \text{-tors}}$
			for such $n$.
	\end{proof}

	\begin{thm}\label{supplementary theorem}
		Let $\chi :\Delta \rightarrow \mathbb Z_p^{\times}$ be a character.
		We have
		\begin{eqnarray*}
			\left( \widehat{E}^+(\mathfrak m_{\infty})^{\chi}
				\otimes \mathbb Q_p/\mathbb Z_p \right) ^{\vee}
				& \cong &
				\Lambda ^{\oplus d} \oplus
				(\Lambda /X)^{\oplus \delta},\\
			\left( \widehat{E}^-(\mathfrak m_{\infty})^{\chi}
				\otimes \mathbb Q_p/\mathbb Z_p \right) ^{\vee}
				& \cong &
				\Lambda ^{\oplus d}.
		\end{eqnarray*}
	\end{thm}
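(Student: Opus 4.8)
The plan is to upgrade the injections with finite cokernel furnished by Proposition \ref{key proposition} into the asserted isomorphisms, using the torsion-splitting criterion of Lemma \ref{supplementary lemma}. Write $M = (\widehat{E}^{\pm}(\mathfrak m_{\infty})^{\chi}\otimes \mathbb Q_p/\mathbb Z_p)^{\vee}$ and let $e = \delta$ in the plus case and $e = 0$ in the minus case, so that the target of the injection of Proposition \ref{key proposition} is $N = \Lambda^{\oplus d}\oplus(\Lambda/X)^{\oplus e}$ in both situations; I regard $M$ as a $\Lambda$-submodule of $N$ via that injection $f$, whose cokernel $N/M$ is finite. The goal is then to prove $M\cong \Lambda^{\oplus d}\oplus(\Lambda/X)^{\oplus e}$, which is exactly the claim (the minus case reading $M\cong\Lambda^{\oplus d}$).

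First I would split off the torsion. Since $\Lambda$ is a domain, $N_{\Lambda\text{-tors}} = (\Lambda/X)^{\oplus e}\cong\mathbb Z_p^{\oplus e}$ and $N/N_{\Lambda\text{-tors}} = \Lambda^{\oplus d}$. As $f$ is injective, $M_{\Lambda\text{-tors}} = M\cap N_{\Lambda\text{-tors}}$, and this has finite index in $N_{\Lambda\text{-tors}}$ because $N_{\Lambda\text{-tors}}/M_{\Lambda\text{-tors}}$ embeds into the finite module $N/M$. Now $X$ annihilates $N_{\Lambda\text{-tors}}$, hence annihilates $M_{\Lambda\text{-tors}}$, so $M_{\Lambda\text{-tors}}$ is a finite-index $\mathbb Z_p$-submodule of $\mathbb Z_p^{\oplus e}$; therefore it is $\mathbb Z_p$-free of rank $e$ with $X$ acting as $0$, i.e. $M_{\Lambda\text{-tors}}\cong(\Lambda/X)^{\oplus e}$ as a $\Lambda$-module.

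Next I would identify the free quotient via Lemma \ref{supplementary lemma}, whose hypotheses I must verify. By Corollary \ref{Z_p-ranks of Gamma_n coinvariants}, $M/\omega_n M = M_{\Gamma_n}$ is $\mathbb Z_p$-free for every $n$. Moreover $N_{\Lambda\text{-tors}} = \{x\in N\mid \omega_n x = 0\}$ for all $n$, since $X\mid\omega_n$ forces $\omega_n$ to annihilate $(\Lambda/X)^{\oplus e}$, while $\omega_n$ acts as a non-zero-divisor on the free summand $\Lambda^{\oplus d}$. Hence Lemma \ref{supplementary lemma} applies and yields an isomorphism $\overline f: M/M_{\Lambda\text{-tors}}\overset{\simeq}{\longrightarrow} N/N_{\Lambda\text{-tors}} = \Lambda^{\oplus d}$. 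Because $\Lambda^{\oplus d}$ is free, hence projective, the short exact sequence $0\to M_{\Lambda\text{-tors}}\to M\to M/M_{\Lambda\text{-tors}}\to 0$ splits, giving $M\cong M_{\Lambda\text{-tors}}\oplus\Lambda^{\oplus d}\cong\Lambda^{\oplus d}\oplus(\Lambda/X)^{\oplus e}$, as required.

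I expect the only delicate point to be the torsion identification: one must confirm that replacing $N_{\Lambda\text{-tors}}$ by the finite-index submodule $M_{\Lambda\text{-tors}}$ cannot change its isomorphism type. This is precisely what the annihilation by $X$ guarantees, reducing the matter to the elementary fact that a finite-index $\mathbb Z_p$-submodule of $\mathbb Z_p^{\oplus e}$ is again free of rank $e$. The essential input making Lemma \ref{supplementary lemma} applicable is the $\mathbb Z_p$-freeness of the coinvariants $M_{\Gamma_n}$ from Corollary \ref{Z_p-ranks of Gamma_n coinvariants} (resting on Proposition \ref{pre-key proposition}), which is also what forbids any residual finite $\Lambda$-submodule, in agreement with Proposition \ref{structure of the plus and the minus local conditions}.
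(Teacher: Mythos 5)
Your proposal is correct and follows essentially the same route as the paper's proof: both split $M=(\widehat{E}^{\pm}(\mathfrak m_{\infty})^{\chi}\otimes \mathbb Q_p/\mathbb Z_p)^{\vee}$ along its $\Lambda$-torsion submodule, identify $M_{\Lambda\text{-tors}}\cong(\Lambda/X)^{\oplus\delta}$ from the finiteness of the cokernel of the injection in Proposition \ref{key proposition}, obtain $M/M_{\Lambda\text{-tors}}\cong\Lambda^{\oplus d}$ by applying Lemma \ref{supplementary lemma} (with its hypotheses supplied by Corollary \ref{Z_p-ranks of Gamma_n coinvariants}), and conclude by projectivity of the free quotient. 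Your write-up is in fact slightly more explicit than the paper's at the torsion step, where the paper simply asserts the isomorphism from finiteness of $\Coker(f_0)$ while you spell out the reduction to finite-index $\mathbb Z_p$-submodules of $\mathbb Z_p^{\oplus\delta}$.
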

	
	\begin{proof}
		We prove this theorem for
			$( \widehat{E}^+(\mathfrak m_{\infty})^{\chi}
			\otimes \mathbb Q_p/\mathbb Z_p ) ^{\vee}$.
		We can prove the rest of the claim similarly.
		
		Let $M = \left( \widehat{E}^+ (\mathfrak m_{\infty})^{\chi}
			\otimes \mathbb Q_p/\mathbb Z_p \right) ^{\vee}$,
			$N = \Lambda ^{\oplus d} \oplus (\Lambda /X)^{\oplus \delta}$,
			and $f$ be the map obtained in Proposition \ref{key proposition}.	
		We consider the following commutative diagram;
		\[
		\xymatrix{
			0 \ar[r] & M_{\Lambda \text{-tors}} \ar[r] \ar@{^{(}->}[d]^{f_0}
				& M \ar[r] \ar@{^{(}->}[d]^{f}
				& M/M_{\Lambda \text{-tors}} \ar[r] \ar@{^{(}->}[d]^{\overline{f}}
				& 0 \\
			0 \ar[r] & (\Lambda /X)^{\oplus \delta} \ar[r]
				& \Lambda ^{\oplus d} \oplus (\Lambda /X)^{\oplus \delta} \ar[r]
				& \Lambda ^{\oplus d} \ar[r]
				& 0.
		}
		\]
		Here we note that $\overline{f}$ is actually injective.
		Indeed, we see that $\Ker (\overline{f})$ is a $\Lambda$-torsion submodule
			of a $\Lambda$-torsion-free module $M/M_{\Lambda \text{-tors}}$,
			since the $\Lambda$-rank of $M/M_{\Lambda \text{-tors}}$ is $d$
			and $\Coker (\overline{f})$ is finite.

		We see that $M_{\Lambda \text{-tors}} \cong (\Lambda /X)^{\oplus \delta}$,
		since $\Coker (f_0)$ is finite.
		On the other hand,
		the assumptions in Lemma \ref{supplementary lemma} are satisfied
			(see Proposition \ref{key proposition},
			Corollary \ref{Z_p-ranks of Gamma_n coinvariants}).
		Thus we have $M/M_{\Lambda \text{-tors}} \cong \Lambda ^{\oplus d}$
			by Lemma \ref{supplementary lemma}.
		Therefore, the above horizontal exact sequence
		splits and thus we get
		\begin{eqnarray*}
			M \cong M/M_{\Lambda \text{-tors}} \oplus M_{\Lambda \text{-tors}}
			\cong \Lambda ^{\oplus d} \oplus (\Lambda /X)^{\oplus \delta}.
		\end{eqnarray*}
	\end{proof}

\section{Finite $\Lambda$-submodules}
	
	We use the notations and the assumptions introduced in Section 2.
	
	Let $\Gamma = \Gal (F_{\infty}/F_0)$ and $\Lambda = \mathbb Z_p[[\Gamma]]$.
	We fix a topological generator $\gamma \in \Gamma$.
	Then we identify the completed group ring $\mathbb Z_p[[\Gamma]]$
		with the ring of power series $\mathbb Z_p[[X]]$ by identifying $\gamma$ with $1+X$.
	
\subsection{Finite $\Lambda$-submodules of $\Sel (F_{\infty},E[p^{\infty}])^{\vee}$}
	
	In this subsection, we study finite $\Lambda$-submodules of
		the Pontryagin dual of the $p$-primary Selmer group.
	The aim of this subsection is to prove Theorem \ref{main theorem I}.
	
	The following proposition is due to Matsuno \cite[Theorem 2.4]{Matsuno03}
		(see also Hachimori--Matsuno \cite{Hachimori-Matsuno00}).

	\begin{prop}\label{key prop for triviality of finite submodule of Sel}
		Let $K$ be a finite extension of $\mathbb Q$,
			$K_{\infty}/K$ a $\mathbb Z_p$-extension,
			$K_n$ its $n$-th layer,
			and $E$ an elliptic curve defined over $K$.
		Put $\Gamma _K = \Gal (K_{\infty}/K)$,
			and $\Lambda _K = \mathbb Z_p[[\Gamma _K]]$.
		Let $X_n$ be the kernel of the restriction map
		\begin{eqnarray*}
			\Sel (K_n, E[p^{\infty}]) \longrightarrow \Sel (K_{\infty}, E[p^{\infty}])
		\end{eqnarray*}
			and $X_{\infty}:= \varprojlim X_n$
			where the projective limit is taken with respect to the corestriction maps.
		
		Assume that the $\mathbb Z_p$-rank of $\Sel ( K_n, E[p^{\infty}] )^{\vee}$
			is bounded as $n \rightarrow \infty$.
		Then the maximal finite $\Lambda _K$-submodule of $\Sel ( K_{\infty}, E[p^{\infty}] )^{\vee}$
			is isomorphic to $X_{\infty}$.
		
		In particular if we further assume that $E(K)[p]=0$,
			then $\Sel ( K_{\infty}, E[p^{\infty}] )^{\vee}$
			has no nontrivial finite $\Lambda_K$-submodule.
	\end{prop}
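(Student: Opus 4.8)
The plan is to study $\mathcal X:=\Sel(K_\infty,E[p^\infty])^\vee$ as a finitely generated $\Lambda_K$-module and to locate its maximal finite $\Lambda_K$-submodule by comparing the finite layers through the control theorem. First I would record that the boundedness hypothesis is equivalent to $\mathcal X$ being a torsion $\Lambda_K$-module: for any finitely generated $\Lambda_K$-module the $\mathbb Z_p$-rank of the coinvariant $\mathcal X_{\Gamma_{K,n}}=\mathcal X/\omega_n\mathcal X$ (where $\Gamma_{K,n}=\Gal(K_\infty/K_n)$) grows like $(\rank_{\Lambda_K}\mathcal X)\,p^n$ up to a bounded error, and the control sequence below shows that $\Sel(K_n,E[p^\infty])^\vee$ and $\mathcal X_{\Gamma_{K,n}}$ have the same leading growth; hence a bounded $\mathbb Z_p$-rank forces the free part of $\mathcal X$ to vanish.

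Next I would set up the control theorem. Applying inflation--restriction to $H^1(K_n,E[p^\infty])\to H^1(K_\infty,E[p^\infty])$ and using that $\Gamma_{K,n}\cong\mathbb Z_p$ has cohomological dimension one (so the $H^2$-obstruction vanishes), the kernel of restriction on $H^1$ is $H^1(\Gamma_{K,n},E(K_\infty)[p^\infty])$. Intersecting with the Selmer conditions and chasing the diagram of global and local restriction maps yields, for each $n$, an exact sequence $0\to X_n\to \Sel(K_n,E[p^\infty])\to \Sel(K_\infty,E[p^\infty])^{\Gamma_{K,n}}\to C_n$, in which $X_n=\Ker(\Sel(K_n,E[p^\infty])\to\Sel(K_\infty,E[p^\infty]))$ sits inside $H^1(\Gamma_{K,n},E(K_\infty)[p^\infty])$ and $C_n$ is assembled from the local cohomology at the places of $K_n$. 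Dualizing and passing to the inverse limit along the corestriction maps, using $\mathcal X=\varprojlim_n\Sel(K_n,E[p^\infty])^\vee$, produces the comparison between $\mathcal X$ and the system $\{X_n\}$ that we need.

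The heart of the argument is the purely $\Lambda_K$-algebraic identification $X_\infty\cong$ (maximal finite $\Lambda_K$-submodule of $\mathcal X$). Here I would combine the structure theorem for torsion $\Lambda_K$-modules with the freeness/triviality criterion of Lemma \ref{equivalent conditions on freenes and triviality of finite lambda-submodules}: a finite submodule of $\mathcal X$ is precisely what obstructs the $\Gamma_{K,n}$-coinvariants of $\mathcal X$ modulo its maximal finite submodule from being $\mathbb Z_p$-free, and the boundedness hypothesis guarantees that the corestriction maps in $\{X_n\}$ are eventually stable, so that $\varprojlim X_n$ is finite and recovers exactly this obstruction. I expect this limit bookkeeping --- controlling the local cokernels $C_n$ and the higher inverse limits so that $\varprojlim X_n$ equals the maximal finite submodule, and not merely a proper sub- or overmodule --- to be the main obstacle; it is precisely the content of Matsuno's theorem cited above.

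Finally, for the last assertion I would deduce from $E(K)[p]=0$ that $E(K_\infty)[p^\infty]=0$: the group $E(K_\infty)[p]\subseteq E[p]$ is a finite $p$-group on which the pro-$p$ group $\Gamma_K=\Gal(K_\infty/K)$ acts, and a nonzero finite $p$-group acted on by a pro-$p$ group has nonzero invariants, so $E(K_\infty)[p]\neq0$ would give $E(K_\infty)[p]^{\Gamma_K}=E(K)[p]\neq0$, a contradiction; thus $E(K_\infty)[p]=0$ and hence $E(K_\infty)[p^\infty]=0$. Then $H^1(\Gamma_{K,n},E(K_\infty)[p^\infty])=0$, so $X_n=0$ for all $n$, whence $X_\infty=0$ and $\Sel(K_\infty,E[p^\infty])^\vee$ has no nontrivial finite $\Lambda_K$-submodule.
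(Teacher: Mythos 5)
The paper gives no proof of this proposition: it is quoted directly from Matsuno (Theorem 2.4 of \cite{Matsuno03}, with \cite{Hachimori-Matsuno00} as antecedent), and your attempt rests on exactly the same citation at the decisive point (the identification of the maximal finite $\Lambda_K$-submodule with $X_\infty$), so your route and the paper's coincide where it matters. Your self-contained argument for the final clause is correct and is precisely how the paper applies the proposition in Theorem \ref{main theorem I}: $E(K)[p]=0$ forces $E(K_\infty)[p^\infty]=0$ because a nonzero finite $p$-group with continuous pro-$p$ action has nonzero invariants, hence every $X_n\subseteq H^1(\Gamma_{K,n},E(K_\infty)[p^\infty])$ vanishes and $X_\infty=0$.
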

	
	
	We check that we can apply the above proposition in our setting
		$K=F_0$, $K_{\infty}=F_{\infty}$.

	\begin{lemma}\label{morphisms of control theorem type are injective}
		The morphisms $\Sel^{\pm}(F_n,E[p^{\infty}]) \rightarrow \Sel ^{\pm}(F_{\infty},E[p^{\infty}])$
			are injective for all $n \geq 0$.
	\end{lemma}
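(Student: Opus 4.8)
The plan is to deduce the injectivity on $\Sel^{\pm}$ from injectivity on ambient Galois cohomology. By construction $\Sel^{\pm}(F_n, E[p^{\infty}])$ is a subgroup of $H^1(F_n, E[p^{\infty}])$, and the map in question is the restriction to this subgroup of the cohomological restriction map $H^1(F_n, E[p^{\infty}]) \to H^1(F_{\infty}, E[p^{\infty}])$; indeed, since $\Sel^{\pm}(F_{\infty}, E[p^{\infty}]) = \varinjlim_m \Sel^{\pm}(F_m, E[p^{\infty}])$ and exactness of direct limits gives an inclusion $\Sel^{\pm}(F_{\infty}, E[p^{\infty}]) \hookrightarrow H^1(F_{\infty}, E[p^{\infty}])$, the natural map to the direct limit is compatible with these inclusions. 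Hence it suffices to prove that the global restriction map on $H^1$ is injective.

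First I would invoke the inflation--restriction exact sequence for the extension $F_{\infty}/F_n$, which identifies the kernel of $H^1(F_n, E[p^{\infty}]) \to H^1(F_{\infty}, E[p^{\infty}])$ with $H^1(\Gal(F_{\infty}/F_n), E(F_{\infty})[p^{\infty}])$. Thus the entire statement reduces to the vanishing $E(F_{\infty})[p^{\infty}] = 0$.

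The key step is then this torsion vanishing, and this is where the supersingular hypotheses enter. I would fix a prime $w \in S_p^{\rm ss}$ and a prime $v$ of $F$ above it; by conditions (iii) and (iv) the completion $F_v$ is the finite unramified extension $k$ of $\mathbb Q_p = F'_w$, so the completion of $F_{\infty}$ at a place above $v$ is precisely the cyclotomic tower $k_{\infty} = k(\mu_{p^{\infty}})$ studied in Section 3. Localizing at this decomposition group yields an injection $E(F_{\infty})[p^{\infty}] \hookrightarrow E(k_{\infty})[p^{\infty}]$. Because $E$ has supersingular reduction at $w$, the reduction has no $p$-torsion, so $E(k_{\infty})[p^{\infty}] = \widehat{E}(\mathfrak m_{\infty})[p^{\infty}]$; and by Proposition \ref{E^ is torsion-free} the formal group $\widehat{E}(\mathfrak m_{\infty})$ is $\mathbb Z_p$-torsion-free, whence $E(k_{\infty})[p^{\infty}] = 0$. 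Therefore $E(F_{\infty})[p^{\infty}] = 0$.

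Combining these observations, the inflation--restriction kernel vanishes, so $H^1(F_n, E[p^{\infty}]) \to H^1(F_{\infty}, E[p^{\infty}])$ is injective, and \emph{a fortiori} so is the induced map on $\Sel^{\pm}$. The only genuinely substantive point is the torsion vanishing $E(F_{\infty})[p^{\infty}] = 0$; once the problem is localized at a supersingular place it is an immediate consequence of Proposition \ref{E^ is torsion-free}, so I do not anticipate a real obstacle here, and the remaining verifications (well-definedness of the map into the direct limit and its compatibility with the inclusion into $H^1$) are formal.
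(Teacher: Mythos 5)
Your proposal is correct and takes essentially the same route as the paper: the paper's proof simply defers to the method of Lemma 9.1 of \cite{Kob03}, which is precisely your argument --- reduce via inflation--restriction to the vanishing of $H^1(\Gal(F_{\infty}/F_n), E(F_{\infty})[p^{\infty}])$, and obtain $E(F_{\infty})[p^{\infty}]=0$ by localizing at a supersingular place and combining the absence of $p$-torsion in the reduction with the torsion-freeness of $\widehat{E}(\mathfrak m_n)$ from Proposition \ref{E^ is torsion-free}. Indeed, this localization argument for torsion vanishing is the same one the paper uses elsewhere (e.g.\ to verify $E(k_{\infty})[p^{\infty}]=0$ before applying Proposition \ref{Greenberg's lemma 2}).
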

	
	\begin{proof}
		We can prove this by the same method as the proof of Lemma 9.1 in \cite{Kob03}.
	\end{proof}
	
	We assume from here that both $\Sel ^{\pm}(F_{\infty}, E[p^{\infty}])^{\vee}$ are $\Lambda$-torsion.
	We denote the Iwasawa $\lambda$-invariant of $\Sel^{\pm} (F_{\infty},E[p^{\infty}])^{\vee}$
		by $\lambda ^{\pm}$.
	
	Let
	\begin{eqnarray*}
		\Sel^1(F_n, E[p^{\infty}])
			:= \Ker \left( \Sel (F_n ,E[p^{\infty}])
			\longrightarrow \prod _{v \in S_{p,F}^{\rm ss}} \frac{H^1(F_{n,v},E[p^{\infty}])}
			{E(F_v) \otimes \mathbb Q_p/\mathbb Z_p}
			\right),
	\end{eqnarray*}
	where $S_{p,F}^{\rm ss}$ is the set of all primes of $F$ lying above $p$
		where $E$ has supersingular reduction.
	
	By the exact sequence (\ref{exact sequence in norm subgroups}), we have an exact sequence
	\begin{eqnarray*}
		0 \longrightarrow
		\frac{ H^1( F_{n,v}, E[p^{\infty}] ) }{ E (F_v) \otimes \mathbb Q_p/\mathbb Z_p } & \longrightarrow &
		\frac{ H^1( F_{n,v}, E[p^{\infty}] ) }{ E^+(F_{n,v}) \otimes \mathbb Q_p/\mathbb Z_p }
			\oplus \frac{ H^1( F_{n,v}, E[p^{\infty}] ) }{ E^-(F_{n,v}) \otimes \mathbb Q_p/\mathbb Z_p }\\[3mm]
		& \longrightarrow &
		\frac{ H^1( F_{n,v}, E[p^{\infty}] ) }{ E (F_{n,v}) \otimes \mathbb Q_p/\mathbb Z_p } \longrightarrow
		0
	\end{eqnarray*}
	for each $n$ and for each prime $v$ of $F$ lying above $p$.
	Thus, for each $n$, we get the following exact sequence
	\begin{eqnarray}\label{exact sequence 1 for main theorem I}
		0 \longrightarrow
		\Sel ^1( F_n, E[p^{\infty}] ) & \overset{\iota}{\longrightarrow} &
		\Sel ^+ ( F_n, E[p^{\infty}] ) \oplus \Sel ^-( F_n, E[p^{\infty}] ) \nonumber \\[3mm]
		& \overset{\eta}{\longrightarrow} &
		\Sel ( F_n, E[p^{\infty}] )
	\end{eqnarray}
	where $\iota$ is the diagonal embedding by inclusions and $\eta$ is $(x,y) \mapsto x-y$.

	\begin{prop}\label{cokernel of eta is finite}
		The cokernel of $\eta$ in the exact sequence (\ref{exact sequence 1 for main theorem I}) is finite.
	\end{prop}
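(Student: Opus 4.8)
The plan is to prolong the left-exact sequence (\ref{exact sequence 1 for main theorem I}) to a six-term exact sequence by the snake lemma, which will realize $\Coker\eta$ as the kernel of a map between two global-to-local cokernels; its finiteness will then be reduced to a $\mathbb Z_p$-corank identity and settled by local and global duality.

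First I would set up the map of short exact sequences underlying (\ref{exact sequence 1 for main theorem I}). Write $H=H^1(F_n,E[p^{\infty}])$ and, for $\ast\in\{1,+,-,\circ\}$, let $P^{\ast}=\bigoplus_v H^1(F_{n,v},E[p^{\infty}])/(E^{\ast}(F_{n,v})\otimes\mathbb Q_p/\mathbb Z_p)$ be the module of local conditions, where at $v\in S_{p,F}^{\rm ss}$ one reads $E^1=E(F_v)$, $E^{\circ}=E(F_{n,v})$, and $E^{\pm}$ as defined, while at all other $v$ every condition is $E(F_{n,v})$; thus $\Sel^{\ast}(F_n)=\Ker(f^{\ast}\colon H\to P^{\ast})$, and set $C^{\ast}=\Coker f^{\ast}$. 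The local exact sequence displayed just before the statement (obtained from (\ref{exact sequence in norm subgroups})), summed over $v$, is exactly the short exact sequence $0\to P^1\to P^+\oplus P^-\to P^{\circ}\to 0$; together with the split exact sequence $0\to H\to H\oplus H\to H\to 0$ (given by $h\mapsto(h,h)$ and $(a,b)\mapsto a-b$) it forms a commutative diagram with vertical arrows $f^1$, $f^+\oplus f^-$, $f^{\circ}$, the squares commuting because the local injection $P^1\hookrightarrow P^+\oplus P^-$ is induced by the two reduction maps. The snake lemma then gives
\[
0\to\Sel^1\to\Sel^+\oplus\Sel^-\xrightarrow{\eta}\Sel\xrightarrow{\delta}C^1\to C^+\oplus C^-\to C^{\circ}\to 0,
\]
which both recovers (\ref{exact sequence 1 for main theorem I}) and identifies $\Coker\eta\cong\Ker(C^1\to C^+\oplus C^-)$.

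Since the four $\Sel^{\ast}(F_n)$ are cofinitely generated over $\mathbb Z_p$, so are the $C^{\ast}$ and $\Coker\eta$, and finiteness is equivalent to $\corank_{\mathbb Z_p}\Coker\eta=0$; by the six-term sequence this amounts to the corank identity $\corank\Sel^1+\corank\Sel^{\circ}=\corank\Sel^++\corank\Sel^-$. To prove it I would compute the four coranks explicitly: the local conditions at $v\in S_{p,F}^{\rm ss}$ are completely understood by the results of Section 3 (their $\mathbb Z_p$-ranks are given in Corollary \ref{Z_p-ranks of plus and minus subgroups}, and satisfy the additivity $\corank E^1_v+\corank E^{\circ}_v=\corank E^+_v+\corank E^-_v$ coming from (\ref{exact sequence in norm subgroups})), and I would feed this into global Tate duality, identifying each $C^{\ast}$ via Poitou--Tate with the Pontryagin dual of the compact Selmer group of $T_pE$ cut out by the orthogonal complements $(E^{\ast}(F_{n,v})\otimes\mathbb Q_p/\mathbb Z_p)^{\perp}$. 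Conditions (iii), (iv), (v) guarantee that at each $v\in S_{p,F}^{\rm ss}$ the completion $F_v$ is unramified over $\mathbb Q_p$ with $E$ having $a_p=0$, so the local theory of Section 3 applies there verbatim and, generalizing Kobayashi's orthogonality (\cite{Kob03} Proposition 10.1), $E^+(F_{n,v})$ and $E^-(F_{n,v})$ are exact orthogonal complements while $E(F_v)$ and $E(F_{n,v})$ annihilate each other.

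I expect the genuinely global finiteness --- equivalently, the surjectivity up to finite index of $\Sel^+\oplus\Sel^-\to\Sel$, or on the dual side of the compact map $\widehat{\Sel}^{+}\oplus\widehat{\Sel}^{-}\to\widehat{\Sel}^{1}$ for $T_pE$ --- to be the main obstacle, since the purely formal relations among the $C^{\ast}$ are consistent with any value of $\corank\Coker\eta$ and a real input beyond the local exact sequence is needed. The decisive ingredients will be the orthogonality of the plus and the minus local conditions over a general unramified $F_v$ (where, as Remark \ref{remark on d_{-1}} shows, the case $d\equiv 0\ (\mathrm{mod}\ 4)$ forces the non-cyclicity of the plus conditions and must be handled with care) together with a careful Poitou--Tate computation at the finitely many primes above $p$.
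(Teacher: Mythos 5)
Your formal setup is correct: the two short exact sequences of global and local modules do form a commutative diagram, the snake lemma applies, and the resulting six-term sequence both recovers (\ref{exact sequence 1 for main theorem I}) and identifies $\Coker\eta$ with $\Ker\bigl(C^1\to C^+\oplus C^-\bigr)$. The reduction of finiteness to the corank identity
$\corank_{\mathbb Z_p}\Sel^1(F_n,E[p^{\infty}])+\corank_{\mathbb Z_p}\Sel(F_n,E[p^{\infty}])
=\corank_{\mathbb Z_p}\Sel^+(F_n,E[p^{\infty}])+\corank_{\mathbb Z_p}\Sel^-(F_n,E[p^{\infty}])$
is also legitimate, provided one notes that $\Coker\eta$ is cofinitely generated over $\mathbb Z_p$ because it is a quotient of $\Sel(F_n,E[p^{\infty}])$ (your inference that the $C^{\ast}$ themselves are cofinitely generated is false as stated, since the $P^{\ast}$ involve all places). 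The genuine gap is that this identity is never proved. ``Computing the four coranks explicitly'' is not possible --- the coranks of the global Selmer groups depend on unknown arithmetic such as Mordell--Weil ranks and Tate--Shafarevich groups --- and the Poitou--Tate formalism you appeal to does not settle the question: identifying each $C^{\ast}$ with (a submodule of) the Pontryagin dual of a compact Selmer group of $T_pE$ merely translates the desired identity into the equivalent assertion that a corresponding map of compact Selmer groups has finite cokernel. Your closing paragraph concedes exactly this: you observe that the formal relations are consistent with any value of $\corank_{\mathbb Z_p}\Coker\eta$ and that ``a real input beyond the local exact sequence is needed,'' but that input is named, not supplied. Since this finiteness is the entire content of the proposition, the statement remains unproved.

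For comparison, the paper's own proof is a one-line reduction to the method of \cite{Kob03}, Lemma 10.1: Kobayashi's argument via the Cassels--Poitou--Tate exact sequence combined with the fact that $E^{\pm}(F_{n,v})$, viewed inside $H^1(F_{n,v},T_pE)$ by the Kummer map, is the exact annihilator of $E^{\pm}(F_{n,v})\otimes\mathbb Q_p/\mathbb Z_p$ under the local Tate pairing; conditions (iii)--(v) of the main theorem guarantee that at every $v\in S_{p,F}^{\rm ss}$ the local situation is the unramified one with $a_p=0$ in which this orthogonality is known (\cite{Kim07}, see also \cite{MKim11}). A complete blind proof would either reproduce that duality argument or cite these precise results and verify their applicability here. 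Be careful with your reference as well: the orthogonality is not ``Proposition 10.1'' of \cite{Kob03} --- Lemma 10.1 of \cite{Kob03} is the prototype of the very statement you are trying to prove, and is precisely what the paper cites --- so as written your outline comes close to invoking the result itself as one of its own ingredients.
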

	
	\begin{proof}
		We can prove this by the same method as the proof of Lemma 10.1 in \cite{Kob03}.
	\end{proof}

	\begin{prop}\label{Z_p-corank of Sel is bounded}
		The $\mathbb Z_p$-rank of $\Sel ( F_n, E[p^{\infty}] )^{\vee}$ is bounded as $n \rightarrow \infty$.
		More precisely, we have
		\begin{eqnarray*}
			\rank _{\mathbb Z_p} \Sel ( F, E[p^{\infty}])^{\vee}
			+\rank _{\mathbb Z_p} \Sel ( F_n, E[p^{\infty}] )^{\vee}
			\leq \lambda ^+ + \lambda ^-
		\end{eqnarray*}
		for every $n$.
	\end{prop}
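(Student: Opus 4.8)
The plan is to pass to $\mathbb{Z}_p$-coranks in the exact sequence (\ref{exact sequence 1 for main theorem I}) and exploit the finiteness of $\Coker(\eta)$ from Proposition \ref{cokernel of eta is finite}. First I would append $\Coker(\eta)$ to (\ref{exact sequence 1 for main theorem I}) to obtain the four-term exact sequence
\begin{eqnarray*}
0 \longrightarrow \Sel^1(F_n,E[p^{\infty}])
& \longrightarrow & \Sel^+(F_n,E[p^{\infty}]) \oplus \Sel^-(F_n,E[p^{\infty}]) \\
& \longrightarrow & \Sel(F_n,E[p^{\infty}]) \longrightarrow \Coker(\eta) \longrightarrow 0,
\end{eqnarray*}
and then use additivity of $\mathbb{Z}_p$-corank together with $\corank_{\mathbb{Z}_p}\Coker(\eta)=0$ to get
\begin{eqnarray*}
&& \corank_{\mathbb{Z}_p} \Sel(F_n,E[p^{\infty}]) + \corank_{\mathbb{Z}_p} \Sel^1(F_n,E[p^{\infty}]) \\
&& = \corank_{\mathbb{Z}_p} \Sel^+(F_n,E[p^{\infty}]) + \corank_{\mathbb{Z}_p} \Sel^-(F_n,E[p^{\infty}]).
\end{eqnarray*}
Since $\corank_{\mathbb{Z}_p}(-)=\rank_{\mathbb{Z}_p}(-)^{\vee}$ on cofinitely generated modules, it suffices to bound the right-hand side above by $\lambda^+ + \lambda^-$ and to bound $\corank_{\mathbb{Z}_p}\Sel^1(F_n,E[p^{\infty}])$ below by $\rank_{\mathbb{Z}_p}\Sel(F,E[p^{\infty}])^{\vee}$.

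For the upper bound I would invoke Lemma \ref{morphisms of control theorem type are injective}: the injection $\Sel^{\pm}(F_n,E[p^{\infty}]) \hookrightarrow \Sel^{\pm}(F_{\infty},E[p^{\infty}])^{\Gamma_n}$, where $\Gamma_n=\Gal(F_{\infty}/F_n)$, dualizes to a surjection from $(\Sel^{\pm}(F_{\infty},E[p^{\infty}])^{\vee})_{\Gamma_n}$, whence $\rank_{\mathbb{Z}_p}\Sel^{\pm}(F_n,E[p^{\infty}])^{\vee} \leq \rank_{\mathbb{Z}_p}(\Sel^{\pm}(F_{\infty},E[p^{\infty}])^{\vee}/\omega_n)$ with $\omega_n=(1+X)^{p^n}-1$. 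As $\Sel^{\pm}(F_{\infty},E[p^{\infty}])^{\vee}$ is $\Lambda$-torsion by assumption, the structure theorem reduces this to its elementary module (finite kernel and cokernel of a pseudo-isomorphism do not change the $\mathbb{Z}_p$-rank of $\omega_n$-coinvariants); the $\mu$-part contributes only $\mathbb{Z}_p$-torsion, while each summand $\Lambda/(f^b)$ contributes $\deg\gcd(f^b,\omega_n)\leq b\deg f$. Summing over the elementary divisors gives $\rank_{\mathbb{Z}_p}(\Sel^{\pm}(F_{\infty},E[p^{\infty}])^{\vee}/\omega_n)\leq \lambda^{\pm}$ for every $n$, so the right-hand side is $\leq \lambda^+ + \lambda^-$.

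For the lower bound, the key observation is that restriction $\Sel(F,E[p^{\infty}]) \to \Sel(F_n,E[p^{\infty}])$ factors through $\Sel^1(F_n,E[p^{\infty}])$: a class in $\Sel(F,E[p^{\infty}])$ localizes at each $v \in S_{p,F}^{\rm ss}$ into $E(F_v)\otimes\mathbb{Q}_p/\mathbb{Z}_p$, and since the Kummer map commutes with restriction its image in $H^1(F_{n,v},E[p^{\infty}])$ lies in the image of $E(F_v)\otimes\mathbb{Q}_p/\mathbb{Z}_p$, which is exactly the defining local condition of $\Sel^1$. The kernel of this restriction is contained in $H^1(\Gal(F_n/F),E(F_n)[p^{\infty}])$, which is finite, so $\corank_{\mathbb{Z}_p}\Sel(F,E[p^{\infty}]) \leq \corank_{\mathbb{Z}_p}\Sel^1(F_n,E[p^{\infty}])$. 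Combining the three displays yields the asserted inequality, and the $\mathbb{Z}_p$-rank of $\Sel(F_n,E[p^{\infty}])^{\vee}$ is in particular bounded. The main subtlety is conceptual rather than computational: it lies in recognizing the role of the auxiliary group $\Sel^1$ with its base-field local condition at supersingular primes, since it is precisely this very restrictive condition that makes restriction from $F$ land inside $\Sel^1(F_n)$ and thereby produces the extra term $\rank_{\mathbb{Z}_p}\Sel(F,E[p^{\infty}])^{\vee}$; the $\Lambda$-module estimate for $\omega_n$-coinvariants of torsion modules is then routine.
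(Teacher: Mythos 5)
Your proposal is correct and follows essentially the same route as the paper: the exact sequence \eqref{exact sequence 1 for main theorem I} with the finiteness of $\Coker(\eta)$ (Proposition \ref{cokernel of eta is finite}), the bound $\rank_{\mathbb Z_p}\Sel^{\pm}(F_n,E[p^{\infty}])^{\vee}\leq\lambda^{\pm}$ via Lemma \ref{morphisms of control theorem type are injective}, and the comparison of $\Sel(F,E[p^{\infty}])$ with $\Sel^1(F_n,E[p^{\infty}])$ through restriction. The only cosmetic differences are that you verify a finite kernel for the restriction map where the paper asserts outright injectivity (which holds since $E(F_n)[p^{\infty}]=0$ here), and you spell out the coinvariant/structure-theorem details behind the $\lambda^{\pm}$ bound that the paper leaves implicit.
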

	
	\begin{proof}
		Since the restriction map $\Sel ( F, E[p^{\infty}] ) \rightarrow \Sel ^1( F_n, E[p^{\infty}] )$ is injective,
			we have
		\begin{eqnarray*}
			\rank _{\mathbb Z_p} \Sel ( F ,E[p^{\infty}] )^{\vee}
			\leq \rank _{\mathbb Z_p} \Sel ^1( F_n ,E[p^{\infty}] )^{\vee}.
		\end{eqnarray*}
		Hence by Lemma \ref{morphisms of control theorem type are injective}
			and Proposition \ref{cokernel of eta is finite}, we get
		\begin{eqnarray*}
			&& \rank _{\mathbb Z_p} \Sel ( F, E[p^{\infty}] )^{\vee}
				+ \rank _{\mathbb Z_p} \Sel ( F_n, E[p^{\infty}] )^{\vee}\\
			&& \leq \rank _{\mathbb Z_p} \Sel ^+ ( F_n, E[p^{\infty}] )^{\vee}
				+ \rank _{\mathbb Z_p} \Sel ^- ( F_n, E[p^{\infty}] )^{\vee}\\
			&& \leq \lambda ^+ + \lambda ^-
		\end{eqnarray*}
		for every $n$ from (\ref{exact sequence 1 for main theorem I}).
		The boundedness of the $\mathbb Z_p$-ranks follows from this immediately.
	\end{proof}
	
	From the above argument,
		we can prove the following theorem.
	\begin{thm}\label{main theorem I}
		Assume that both $\Sel ^+(F_{\infty}, E[p^{\infty}])^{\vee}$
			and $\Sel ^-(F_{\infty}, E[p^{\infty}])^{\vee}$
			are $\Lambda$-torsion.
		Then $\Sel (F_{\infty},E[p^{\infty}])^{\vee}$
			has no nontrivial finite $\Lambda$-submodule.
	\end{thm}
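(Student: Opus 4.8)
The plan is to apply Matsuno's criterion, Proposition \ref{key prop for triviality of finite submodule of Sel}, with $K=F_0$ and $K_{\infty}=F_{\infty}$, so that the $n$-th layer $K_n$ is exactly $F_n=F(\mu_{p^{n+1}})$ and $\Lambda_K=\Lambda$. That proposition has two hypotheses: the boundedness of $\rank_{\mathbb Z_p}\Sel(F_n,E[p^{\infty}])^{\vee}$ as $n\to\infty$, and the triviality $E(F_0)[p]=0$. Once both are verified, the ``in particular'' clause of Proposition \ref{key prop for triviality of finite submodule of Sel} immediately gives that $\Sel(F_{\infty},E[p^{\infty}])^{\vee}$ has no nontrivial finite $\Lambda$-submodule.

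First I would check the boundedness hypothesis, which is precisely the content of Proposition \ref{Z_p-corank of Sel is bounded}: it bounds $\rank_{\mathbb Z_p}\Sel(F_n,E[p^{\infty}])^{\vee}$ by $\lambda^++\lambda^-$ (after subtracting a fixed term) uniformly in $n$. This is the only place where the standing assumption that both $\Sel^{\pm}(F_{\infty},E[p^{\infty}])^{\vee}$ are $\Lambda$-torsion enters, since it guarantees that $\lambda^{\pm}$ are finite and hence that the bound is meaningful. Thus the heavy lifting for this step has already been carried out, through the exact sequence (\ref{exact sequence 1 for main theorem I}), Lemma \ref{morphisms of control theorem type are injective}, and Proposition \ref{cokernel of eta is finite}.

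Next I would verify $E(F_0)[p]=0$. Here I would use that $E$ has supersingular reduction at some prime $w\in S_p^{\rm ss}$, so that over the completion of $F_0$ at any prime $v\mid w$ the kernel of reduction is the formal group $\widehat{E}$, which is $\mathbb Z_p$-torsion-free by Proposition \ref{E^ is torsion-free}. Consequently any nontrivial $p$-torsion point of $E(F_{0,v})$ would inject into $\widetilde{E}_w$ over the residue field; but supersingular reduction forces the $p$-torsion of $\widetilde{E}_w$ to vanish, so $E(F_{0,v})[p]=0$ and hence $E(F_0)[p]=0$. (Equivalently, this is the vanishing $E(F_{\infty})[p^{\infty}]=0$ already invoked in the local theory via Proposition \ref{E^ is torsion-free}.)

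With both hypotheses in hand, Proposition \ref{key prop for triviality of finite submodule of Sel} yields the conclusion. The main obstacle is not in this final assembly, which is essentially formal, but in the boundedness input of Proposition \ref{Z_p-corank of Sel is bounded}; the genuine subtlety there is the finiteness of $\Coker(\eta)$ in (\ref{exact sequence 1 for main theorem I}) together with the injectivity of the control maps, which is exactly why the $\Lambda$-torsion hypothesis on \emph{both} the plus and the minus Selmer groups is indispensable.
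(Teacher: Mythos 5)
Your proposal is correct and follows essentially the same route as the paper: the paper's proof of Theorem \ref{main theorem I} consists precisely of invoking Proposition \ref{key prop for triviality of finite submodule of Sel} with $K=F_0$, $K_\infty=F_\infty$, citing Proposition \ref{Z_p-corank of Sel is bounded} for the boundedness hypothesis (where the $\Lambda$-torsion assumption on both signed Selmer groups enters) and Proposition \ref{E^ is torsion-free} for $E(F_0)[p]=0$. Your expanded justification of $E(F_0)[p]=0$ via the torsion-freeness of the formal group together with the vanishing of $p$-torsion of a supersingular reduction is exactly the argument the paper leaves implicit.
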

	
	\begin{proof}
		The $\mathbb Z_p$-rank of $\Sel (F_n,E[p^{\infty}])^{\vee}$ is bounded as $n \rightarrow \infty$
			(cf. Proposition \ref{Z_p-corank of Sel is bounded}).
		Further, we have $E(F_0)[p]=0$ by Proposition \ref{E^ is torsion-free}.
		Thus we can apply Proposition \ref{key prop for triviality of finite submodule of Sel}
			and get the desired result.
	\end{proof}

\subsection{Finite $\Lambda$-submodules of $\Sel^{\pm} (F_{\infty},E[p^{\infty}])^{\vee}$}
	
	Finally, we study finite $\Lambda$-submodules of
		the Pontryagin duals of the plus and the minus Selmer groups.
	The aim of this subsection is to prove our main theorem (Theorem \ref{main theorem II}).
	
	We prove that the triviality of finite $\Lambda$-submodules of $\Sel (F_{\infty},E[p^{\infty}]) ^{\vee}$
		is inherited to that of $\Sel^{\pm}(F_{\infty},E[p^{\infty}])^{\vee}$.

	Let us consider the following exact sequence of $\Lambda$-modules
		coming from the definition of the Selmer groups;
	\begin{eqnarray}\label{exact sequence coming from defn of Selmer groups}
		\bigoplus _{v \in S_{p,F}^{\rm ss}} \left(
			\frac{H^1{(F_{\infty ,v},E[p^{\infty}])}}
			{E^{\pm}(F_{\infty ,v})\otimes \mathbb Q_p/\mathbb Z_p}
			\right)^{\vee}
		& \overset{\iota ^{\pm}}{\longrightarrow} & \Sel (F_{\infty},E[p^{\infty}])^{\vee} \nonumber \\
		& \longrightarrow & \Sel^{\pm} (F_{\infty},E[p^{\infty}])^{\vee}
		\longrightarrow 0.
	\end{eqnarray}


	\begin{prop}\label{a property of exact seqn coming from defn of Selmer groups}
		Assume that $\Sel ^{\pm}(F_{\infty}, E[p^{\infty}])^{\vee}$ is $\Lambda$-torsion.
		If each direct summand $( H^1(F_{\infty,v},E[p^{\infty}])/
			(E^{\pm}(F_{\infty,v}) \otimes \mathbb Q_p/\mathbb Z_p) ) ^{\vee}$
			of the leftmost in the exact sequence (\ref{exact sequence coming from defn of Selmer groups})
			is a torsion-free $\Lambda$-module of $\Lambda$-rank $[F_{0,v}:\mathbb Q_p]$,
			then the map $\iota ^{\pm}$ in (\ref{exact sequence coming from defn of Selmer groups})
			is injective.
	\end{prop}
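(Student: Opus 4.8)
The plan is to read \eqref{exact sequence coming from defn of Selmer groups} as a three-term exact sequence and reduce the injectivity of $\iota^{\pm}$ to a rank computation. Write $L^{\pm}$ for the leftmost module, and set $X=\Sel(F_{\infty},E[p^{\infty}])^{\vee}$, $X^{\pm}=\Sel^{\pm}(F_{\infty},E[p^{\infty}])^{\vee}$. By hypothesis $L^{\pm}$ is $\Lambda$-torsion-free, so its submodule $\Ker\iota^{\pm}$ is again $\Lambda$-torsion-free; hence it suffices to prove $\rank_{\Lambda}\Ker\iota^{\pm}=0$, because a torsion-free module of $\Lambda$-rank $0$ is trivial.

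First I would do the rank bookkeeping. Since $X^{\pm}$ is $\Lambda$-torsion by assumption, the exact sequence $L^{\pm}\to X\to X^{\pm}\to 0$ gives $\rank_{\Lambda}\Im\iota^{\pm}=\rank_{\Lambda}X$, whence $\rank_{\Lambda}\Ker\iota^{\pm}=\rank_{\Lambda}L^{\pm}-\rank_{\Lambda}X$. The rank hypothesis gives $\rank_{\Lambda}L^{\pm}=\sum_{v\in S_{p,F}^{\rm ss}}[F_{0,v}:\mathbb Q_p]=:R$, so the assertion is equivalent to $\rank_{\Lambda}X=R$. The inequality $\rank_{\Lambda}X\le R$ is immediate, since $X/\Im\iota^{\pm}=X^{\pm}$ is torsion while $\Im\iota^{\pm}$ is a quotient of the rank-$R$ module $L^{\pm}$. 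Everything therefore comes down to the reverse inequality $\rank_{\Lambda}X\ge R$.

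For the reverse inequality I would identify $\Ker\iota^{\pm}$ with the Pontryagin dual of the cokernel of the global-to-local map $\phi^{\pm}\colon \Sel(F_{\infty},E[p^{\infty}])\to\bigoplus_{v\in S_{p,F}^{\rm ss}} H^1(F_{\infty,v},E[p^{\infty}])/(E^{\pm}(F_{\infty,v})\otimes\mathbb Q_p/\mathbb Z_p)$ coming from the definition of $\Sel^{\pm}$. Thus $\rank_{\Lambda}\Ker\iota^{\pm}=0$ is equivalent to $\Coker\phi^{\pm}$ being $\Lambda$-cotorsion. I would obtain this from global (Cassels--Poitou--Tate) duality over $F_{\infty}$: the cokernel of a localization map with prescribed local conditions is the Pontryagin dual of the compact ($T_pE$-valued) Selmer group cut out by the orthogonal-complement conditions under the local Tate pairing. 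By the supersingular local analysis of Section 3 (this is where conditions (iii)--(v), odd $p$, and $a_w=0$ enter) the plus and the minus conditions are their own exact annihilators, so this compact Selmer group is $\varprojlim_n\Sel^{\pm}(F_n,T_pE)$, which is $\Lambda$-torsion precisely because $X^{\pm}$ is (the two share a characteristic ideal up to the involution of $\Lambda$). Hence $\Coker\phi^{\pm}$ is $\Lambda$-cotorsion, $\rank_{\Lambda}X=R$, and $\iota^{\pm}$ is injective.

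The hard part is the duality step in the last paragraph: setting up Cassels--Poitou--Tate correctly over the infinite extension $F_{\infty}$ and verifying the self-orthogonality of the plus and minus local conditions under the local Tate pairing. Once the orthogonality is in place, torsionness of the complementary compact Selmer group is a formal consequence of the hypothesis that $X^{\pm}$ is $\Lambda$-torsion, and the torsion-freeness of $L^{\pm}$ then finishes the argument. An alternative, should the explicit duality prove cumbersome, is to compute $\rank_{\Lambda}X=R$ directly via Greenberg's global Euler-characteristic formula together with the local $\Lambda$-coranks of $H^1(F_{\infty,v},E[p^{\infty}])$ (Proposition \ref{Greenberg's lemma 2}) and of $E(F_{\infty,v})\otimes\mathbb Q_p/\mathbb Z_p$; but one must then carefully account for the failure of surjectivity of the global-to-local map at the supersingular primes, which is exactly what contributes the otherwise-missing rank $R$.
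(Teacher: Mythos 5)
Your reduction is exactly the paper's: since the source of $\iota^{\pm}$ is assumed $\Lambda$-torsion-free, $\Ker \iota^{\pm}$ is torsion-free, so injectivity follows once $\rank_{\Lambda}\Ker\iota^{\pm}=0$; and by the rank count in (\ref{exact sequence coming from defn of Selmer groups}) (using that $\Sel^{\pm}(F_{\infty},E[p^{\infty}])^{\vee}$ is torsion) this is equivalent to $\rank_{\Lambda}\Sel(F_{\infty},E[p^{\infty}])^{\vee}=\sum_{v}[F_{0,v}:\mathbb Q_p]$, the inequality ``$\leq$'' being immediate. Up to here you coincide with the paper. The divergence is in the lower bound $\rank_{\Lambda}\Sel(F_{\infty},E[p^{\infty}])^{\vee}\geq\sum_{v}[F_{0,v}:\mathbb Q_p]$: the paper obtains it by simply quoting Greenberg (\cite{Gre99}, Theorem 1.7), which is precisely this statement and is the only global input of the proof, whereas you propose to rederive it via Cassels--Poitou--Tate duality. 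That is where your proposal has genuine gaps.

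Three concrete problems with your duality step. First, Poitou--Tate controls the cokernel of the localization map from $H^1(F_S/F_{\infty},E[p^{\infty}])$ with conditions at \emph{all} places of $S$, whereas your $\phi^{\pm}$ is defined on $\Sel(F_{\infty},E[p^{\infty}])$ with conditions only at the supersingular primes. Comparing the two, $\Coker\phi^{\pm}$ surjects onto $\bigoplus_{v}H^1(F_{\infty,v},E)[p^{\infty}]$ with kernel embedding into the Poitou--Tate cokernel; so you additionally need the local fact that $H^1(F_{\infty,v},E)[p^{\infty}]$ is $\Lambda$-cotorsion (in fact zero) at supersingular primes, equivalently that the Kummer image $E(F_{\infty,v})\otimes\mathbb Q_p/\mathbb Z_p$ has full $\Lambda$-corank $2[F_{0,v}:\mathbb Q_p]$ --- a nontrivial supersingular phenomenon (vanishing of universal norms) that your main route never invokes, and without which the argument does not close. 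Second, you attribute the self-orthogonality of $E^{\pm}$ under the local Tate pairing to ``the local analysis of Section 3,'' but Section 3 computes Galois-module structures of $\widehat{E}^{\pm}(\mathfrak m_n)$ and the $\Lambda$-structure of $(\widehat{E}^{\pm}(\mathfrak m_{\infty})\otimes\mathbb Q_p/\mathbb Z_p)^{\vee}$ and never touches the pairing; in the generality of this paper this orthogonality is an external, unproven input. Third, your justification that the compact Selmer group $\varprojlim_n\Sel^{\pm}(F_n,T_pE)$ is torsion --- that it ``shares a characteristic ideal up to the involution'' with $\Sel^{\pm}(F_{\infty},E[p^{\infty}])^{\vee}$ --- is not a correct general statement; the implication you need (torsionness of the compact Selmer group from torsionness of the discrete one) is true but requires its own argument, e.g.\ a corank comparison through the natural map of $\varprojlim_n\Sel^{\pm}(F_n,T_pE)\otimes\mathbb Q_p/\mathbb Z_p$ into $\Sel^{\pm}(F_{\infty},E[p^{\infty}])$. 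All three points are repairable --- your plan is essentially a reconstruction of Greenberg's proof of his Theorem 1.7 --- but as written the essential content of the proposition is left unproved; the efficient fix is to do what the paper does and cite that theorem directly.
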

	
	\begin{proof}
		We have
			$\rank _{\Lambda} (\Sel (F_{\infty}, E[p^{\infty}])^{\vee})
			\geq \sum _{v \in S_{p,F}^{\rm ss}}[F_{0,v}:\mathbb Q_p]$
			(cf. \cite{Gre99} Theorem 1.7).
		From (\ref{exact sequence coming from defn of Selmer groups})
			and our assumptions, we see that
		\begin{eqnarray*}
			\sum _{v \in S_{p,F}^{\rm ss}}[F_{0,v}:\mathbb Q_p]
			&=& \rank _{\Lambda} \left(
				\bigoplus _{v \in S_{p,F}^{\rm ss}} \left(
				\frac{H^1(F_{\infty,v},E[p^{\infty}])}{E^{\pm}(F_{\infty ,v})
				\otimes \mathbb Q_p/\mathbb Z_p}
				\right) ^{\vee} \right)\\[1mm]
			& \geq & \rank _{\Lambda} \left(
				\Sel (F_{\infty},E[p^{\infty}])^{\vee}
				\right).
		\end{eqnarray*}
		Thus we get
		\begin{eqnarray*}
			\rank _{\Lambda} \left(
				\bigoplus _{v \in S_{p,F}^{\rm ss}} \left(
				\frac{H^1(F_{\infty,v},E[p^{\infty}])}{E^{\pm}(F_{\infty ,v})
				\otimes \mathbb Q_p/\mathbb Z_p}
				\right) ^{\vee} \right)
				= \rank _{\Lambda} \left(
				\Sel (F_{\infty},E[p^{\infty}])^{\vee}
				\right).
		\end{eqnarray*}
		From this,
			we see that the kernel $\Ker \iota^{\pm}$ is $\Lambda$-torsion.
		Therefore we get the conclusion since the leftmost direct sum
			in the exact sequence (\ref{exact sequence coming from defn of Selmer groups})
			is a torsion-free $\Lambda$-module.
	\end{proof}

	The following proposition is a key tool for the proof of our main theorem.
	\begin{prop}[Greenberg \cite{Gre99} p.104--105]\label{Greenberg's lemma 1}
		Let $f:M \rightarrow N$ be an injective homomorphism of $\Lambda$-modules.
		Suppose that $N$ is a finitely generated $\Lambda$-module
		which has no nontrivial finite $\Lambda$-submodule,
		and that $M$ is a free $\Lambda$-module.
		Then the cokernel $\Coker (f)$
		has no nontrivial finite $\Lambda$-submodule.
	\end{prop}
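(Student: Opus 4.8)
The plan is to use the criterion of Lemma \ref{equivalent conditions on freenes and triviality of finite lambda-submodules} (2): a finitely generated $\Lambda$-module $P$ has no nontrivial finite $\Lambda$-submodule if and only if $P^{\Gamma}$ is a free $\mathbb{Z}_p$-module. Since $\Coker (f)$ is a quotient of the finitely generated module $N$, it is itself finitely generated, so it suffices to prove that $\Coker (f)^{\Gamma}$ is $\mathbb{Z}_p$-free. Recall that under the identification $\gamma = 1+X$ we have $P^{\Gamma} = \{ x \in P \mid Xx = 0 \}$ and $P_{\Gamma} = P/XP$.

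First I would apply the snake lemma to multiplication by $\gamma - 1 = X$ on the short exact sequence $0 \rightarrow M \rightarrow N \rightarrow \Coker (f) \rightarrow 0$, obtaining the invariant-coinvariant exact sequence
$$0 \rightarrow M^{\Gamma} \rightarrow N^{\Gamma} \rightarrow \Coker (f)^{\Gamma} \rightarrow M_{\Gamma} \rightarrow N_{\Gamma} \rightarrow \Coker (f)_{\Gamma} \rightarrow 0.$$
Because $M$ is a free $\Lambda$-module it is $X$-torsion-free, so $M^{\Gamma} = 0$, and $M_{\Gamma} = M/XM \cong (\Lambda / X\Lambda)^{\oplus r} \cong \mathbb{Z}_p^{\oplus r}$ is $\mathbb{Z}_p$-free, where $r = \rank _{\Lambda} M$. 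By hypothesis $N$ has no nontrivial finite $\Lambda$-submodule, so $N^{\Gamma}$ is $\mathbb{Z}_p$-free by the same criterion. The exact sequence therefore collapses to a short exact sequence
$$0 \rightarrow N^{\Gamma} \rightarrow \Coker (f)^{\Gamma} \rightarrow \Ker \left( M_{\Gamma} \rightarrow N_{\Gamma} \right) \rightarrow 0.$$

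It remains to see that the middle term is $\mathbb{Z}_p$-free. The right-hand term is a $\mathbb{Z}_p$-submodule of the free $\mathbb{Z}_p$-module $M_{\Gamma}$, hence is itself free since $\mathbb{Z}_p$ is a principal ideal domain; being $\mathbb{Z}_p$-projective, the displayed sequence splits over $\mathbb{Z}_p$, so $\Coker (f)^{\Gamma}$ is an extension of one $\mathbb{Z}_p$-free module by another and is thus $\mathbb{Z}_p$-free. Applying Lemma \ref{equivalent conditions on freenes and triviality of finite lambda-submodules} (2) once more yields the conclusion. The only delicate points are the verification that the $\Gamma$-invariant-coinvariant sequence is exact with exactly these six terms (which uses that $\Gamma \cong \mathbb{Z}_p$ has cohomological dimension one, equivalently that $X$ is a nonzerodivisor on $M$ so that the snake lemma produces no further terms) and the final free-by-free splitting over the principal ideal domain $\mathbb{Z}_p$; I expect no substantial obstacle beyond assembling these steps in the correct order.
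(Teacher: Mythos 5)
Your proposal is correct and follows essentially the same route as the paper's proof: both take the $\Gamma$-invariant--coinvariant exact sequence of $0 \rightarrow M \rightarrow N \rightarrow \Coker(f) \rightarrow 0$, use Lemma \ref{equivalent conditions on freenes and triviality of finite lambda-submodules} to get $M^{\Gamma}=0$, $M_{\Gamma}$ $\mathbb{Z}_p$-free, and $N^{\Gamma}$ $\mathbb{Z}_p$-free, and then conclude that $\Coker(f)^{\Gamma}$ is $\mathbb{Z}_p$-free. You merely spell out the splitting of the resulting extension of $\mathbb{Z}_p$-free modules, a step the paper leaves implicit.
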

	
	\begin{proof}
		We put $N_1 := \Coker (f)$.
		Then by taking the invariant-coinvariant exact sequence
			of $0 \rightarrow M \rightarrow N \rightarrow N_1 \rightarrow 0$,
			we get an exact sequence
		\begin{eqnarray}\label{exact sequence in Greenberg's lemma 1}
			M^{\Gamma} \longrightarrow
			N^{\Gamma} \longrightarrow
			N_1^{\Gamma} \longrightarrow
			M_{\Gamma}.
		\end{eqnarray}
		Since $M$ is a free $\Lambda$-module, we see that $M^{\Gamma}=0$
			and $M_{\Gamma}$ is a free $\mathbb Z_p$-module
			by Lemma \ref{equivalent conditions on freenes and triviality of finite lambda-submodules}.
		Since $N$ has no non-trivial finite $\Lambda$-submodule,
			we see that $N^{\Gamma}$ is a free $\mathbb Z_p$-module
			by Lemma \ref{equivalent conditions on freenes and triviality of finite lambda-submodules}.
		Thus from the exact sequence (\ref{exact sequence in Greenberg's lemma 1}),
			we see that $N_1^{\Gamma}$ is a free $\mathbb Z_p$-module
			and therefore $N_1$ has no nontrivial finite $\Lambda$-submodule
			again by Lemma \ref{equivalent conditions on freenes and triviality of finite lambda-submodules}.
	\end{proof}

	\begin{thm}\label{main theorem II}
		Assume that both $\Sel ^+(F_{\infty}, E[p^{\infty}])^{\vee}$
			and $\Sel ^-(F_{\infty}, E[p^{\infty}])^{\vee}$
			are $\Lambda$-torsion.
		Then both $\Sel ^+(F_{\infty}, E[p^{\infty}])^{\vee}$
			and $\Sel ^-(F_{\infty}, E[p^{\infty}])^{\vee}$
			have no nontrivial finite $\Lambda$-submodule.
	\end{thm}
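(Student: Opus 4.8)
The plan is to realize $\Sel^{\pm}(F_{\infty},E[p^{\infty}])^{\vee}$ as the cokernel of an injection of a \emph{free} $\Lambda$-module into a $\Lambda$-module that we already know has no nontrivial finite $\Lambda$-submodule, and then to invoke Proposition \ref{Greenberg's lemma 1}. The exact sequence (\ref{exact sequence coming from defn of Selmer groups}) is exactly the tool that exhibits $\Sel^{\pm}(F_{\infty},E[p^{\infty}])^{\vee}$ as such a cokernel, so the whole argument reduces to verifying the hypotheses of Proposition \ref{Greenberg's lemma 1} for the map $\iota^{\pm}$.

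First I would establish that the target $\Sel(F_{\infty},E[p^{\infty}])^{\vee}$ has no nontrivial finite $\Lambda$-submodule; this is precisely the content of Theorem \ref{main theorem I}, which applies because we are assuming both $\Sel^{\pm}(F_{\infty},E[p^{\infty}])^{\vee}$ are $\Lambda$-torsion. Next I would identify the local summands in the leftmost term of (\ref{exact sequence coming from defn of Selmer groups}). For each prime $v \in S_{p,F}^{\rm ss}$, conditions (iii), (iv) and (v) guarantee that the completion $F_{0,v}$ is of the form $k(\mu_p)$ with $k$ unramified over $\mathbb Q_p$, that $F_{\infty,v}/F_{0,v}$ is the cyclotomic $\mathbb Z_p$-extension, and that $E/\mathbb Q_p$ has supersingular reduction with $a_p=0$; moreover $\Gal(F_{\infty,v}/F_{0,v})\cong\Gamma$ since $p$ is totally ramified in the cyclotomic $\mathbb Z_p$-extension. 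Thus Proposition \ref{Lambda module structure of H^1/E^{pm}} applies verbatim to the local situation at $v$ and gives
\begin{eqnarray*}
	\left(\frac{H^1(F_{\infty,v},E[p^{\infty}])}{E^{\pm}(F_{\infty,v})\otimes\mathbb Q_p/\mathbb Z_p}\right)^{\vee}\cong\Lambda^{\oplus[F_{0,v}:\mathbb Q_p]}.
\end{eqnarray*}
In particular each summand is a torsion-free $\Lambda$-module of $\Lambda$-rank $[F_{0,v}:\mathbb Q_p]$, so Proposition \ref{a property of exact seqn coming from defn of Selmer groups} yields that $\iota^{\pm}$ is injective, and the source $\bigoplus_{v}(\cdots)^{\vee}$ is a finite direct sum of free $\Lambda$-modules, hence free.

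With these pieces in place the conclusion is immediate: I would apply Proposition \ref{Greenberg's lemma 1} to the injection $\iota^{\pm}$, taking $M=\bigoplus_{v\in S_{p,F}^{\rm ss}}(H^1(F_{\infty,v},E[p^{\infty}])/(E^{\pm}(F_{\infty,v})\otimes\mathbb Q_p/\mathbb Z_p))^{\vee}$ (free) and $N=\Sel(F_{\infty},E[p^{\infty}])^{\vee}$ (no nontrivial finite $\Lambda$-submodule), so that $\Coker(\iota^{\pm})=\Sel^{\pm}(F_{\infty},E[p^{\infty}])^{\vee}$ has no nontrivial finite $\Lambda$-submodule, as desired. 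I expect the only genuinely substantive point—beyond bookkeeping—to be the local-global identification ensuring that the local Proposition \ref{Lambda module structure of H^1/E^{pm}} really governs the summands at each $v$; all the deep work lies upstream in Section 3 (culminating in the freeness statement) and in Theorem \ref{main theorem I}, so that here the argument is a clean assembly rather than a new computation.
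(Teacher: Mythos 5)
Your proposal is correct and follows essentially the same route as the paper's own proof: both use the exact sequence (\ref{exact sequence coming from defn of Selmer groups}), identify the local summands as free $\Lambda$-modules via Proposition \ref{Lambda module structure of H^1/E^{pm}}, deduce injectivity of $\iota^{\pm}$ from Proposition \ref{a property of exact seqn coming from defn of Selmer groups}, and conclude with Proposition \ref{Greenberg's lemma 1} together with Theorem \ref{main theorem I}. Your explicit verification of the local-global identification (that $F_{0,v}=k(\mu_p)$ with $k$ unramified, so the Section 3 results apply at each $v\in S_{p,F}^{\rm ss}$) is a point the paper leaves implicit, but it is the same argument.
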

	
	\begin{proof}
		By Proposition \ref{Lambda module structure of H^1/E^{pm}}, we have
			\begin{eqnarray*}
				\left(
				\frac{H^1(F_{\infty,v},E[p^{\infty}])}{E^{\pm}(F_{\infty ,v}) \otimes \mathbb Q_p/\mathbb Z_p}
				\right) ^{\vee}
				\cong \Lambda ^{\oplus [F_{0,v}:\mathbb Q_p]}
			\end{eqnarray*}
			for each prime $v \in S_{p,F}^{\rm ss}$.
		Thus we can apply Proposition \ref{a property of exact seqn coming from defn of Selmer groups}
			and Proposition \ref{Greenberg's lemma 1} for $f=\iota^{\pm}$.
		Thus, by Theorem \ref{main theorem I}, we get the desired result.
	\end{proof}
	
	{\bf Acknowledgements}
	
	The authors are grateful to Professor Masato Kurihara
		for suggesting this problem
		and idea on the proof of Theorem \ref{supplementary theorem}
		(=Theorem \ref{supplementary theorem (Intro)}),
		advice, helpful discussions and generous support.
	The authors thank Professor Sin-ichi Kobayashi
		for informative conversations,
		and Professor Robert Pollack for letting us know
		Myoungil Kim's result \cite{MKim11},
		and Florian Sprung for conversation relating to \cite{Kim13} and \cite{Kim14}.
	The authors are partially supported by JSPS Core-to-core program,
		Foundation of a Global Research Cooperative Center in Mathematics focused on
		Number Theory and Geometry.


	
\end{document}